\pdfoutput=1\relax
\documentclass[english]{article}
\usepackage{meta/preamble}
\title{Local structure of smooth \texorpdfstring{$p$}{p}-adic analytic Artin stacks}
\date{\today}
\author{Amos Kaminski\thanks{Department of Mathematics, Weizmann Institute of Science \\ amos.kaminski@weizmann.ac.il}}
\begin{document}

\setlength{\abovedisplayskip}{3pt}
\setlength{\belowdisplayskip}{3pt}
\setlength{\abovedisplayshortskip}{0pt}
\setlength{\belowdisplayshortskip}{0pt}

\maketitle
\begin{center}
\end{center}
\begin{abstract}
We prove \cite[Conjecture~5.17]{Clausen} on the local light--profinite structure of smooth $p$-adic analytic Artin stacks.
The argument proceeds in several reductions.
First, by proving a generalization of van~Dantzig theorem for groupoids, we reduce the conjecture to the compact Hausdorff case.
This reduces the conjecture to the statement that the geometric realization of a groupoid object whose object and morphism spaces are light profinite and whose source and target maps are open is light profinite.
Next, we simplify the groupoid by constructing a closed skeleton; after quotienting by a clopen subgroupoid, the remaining problem reduces to proving that a profinite family of finite groups can be presented as an inverse limit of finite families of finite groups.
As observed by Clausen immediately after \cite[Conjecture~5.17]{Clausen}, our result implies in particular that smooth $p$-adic analytic Artin stacks are \emph{$!$-good}.
\end{abstract}

\setcounter{tocdepth}{2}
$\\$
\begin{figure}[H]
  \centering{}
  \setlength{\fboxsep}{-5pt}
  \setlength{\fboxsep}{5pt}
  \frame{\includegraphics[scale=0.14]{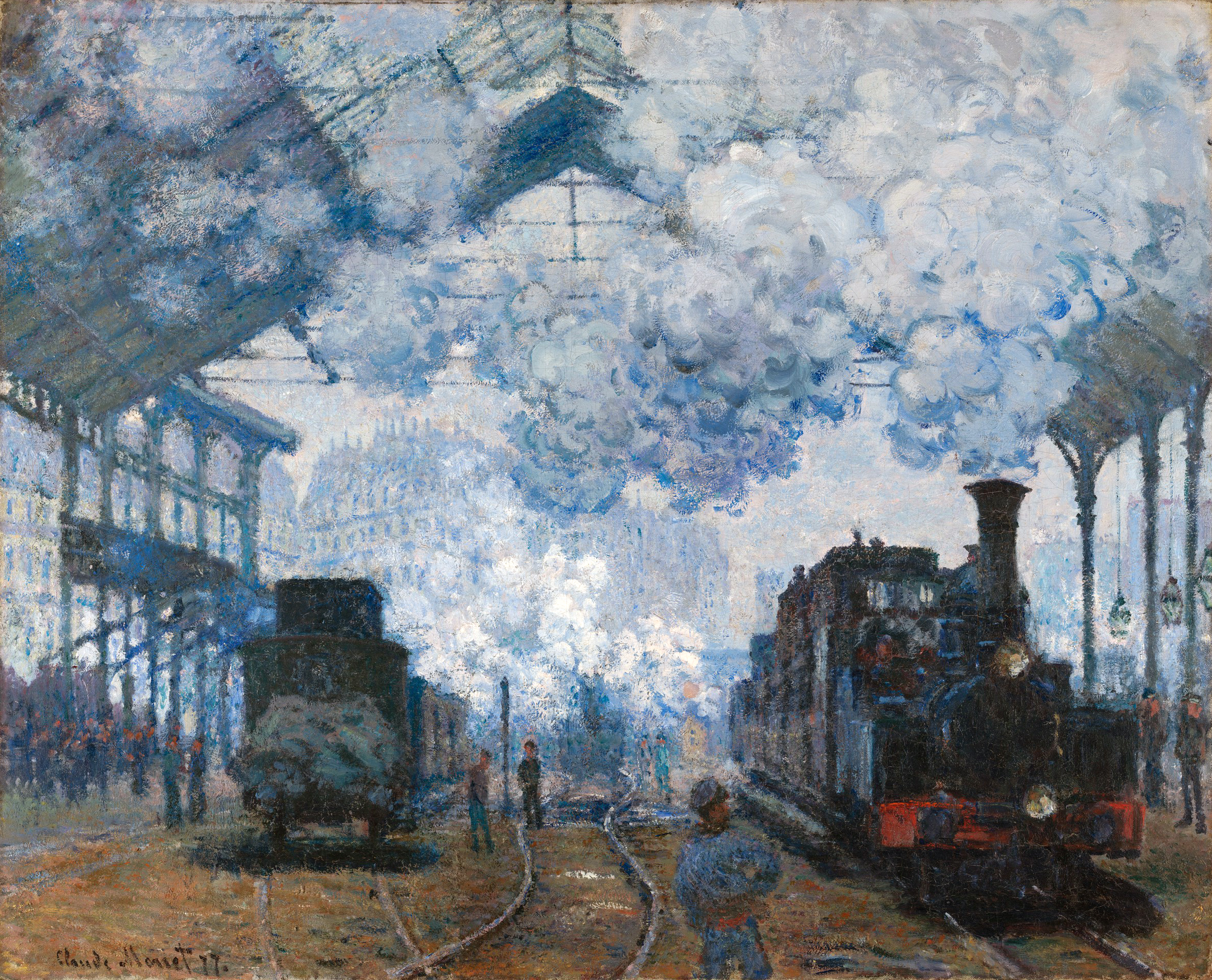}}
  \caption{\footnotesize 
Claude Monet - The Gare Saint-Lazare, Arrival of a Train\\}
    
\end{figure}
\newpage
\begingroup
\renewcommand{\baselinestretch}{1.2}\selectfont
\setlength{\parskip}{6pt}
\tableofcontents
\endgroup
\newpage
\section{Introduction}

In \cite{Clausen}, Clausen conjectured that smooth $p$-adic analytic Artin stacks become
\emph{\'etale-locally light profinite} after passage to light condensed anima.
Concretely, let
\[ L:\Sh(\Man_{\mathbb{Q}_p})\longrightarrow \mathrm{CondAn}^{\mathrm{light}} \]
be the left Kan extension of the functor sending a $p$-adic manifold to its underlying topological space,
viewed on the light site (i.e.\ through maps from light profinite sets).
Then \cite[Conjecture~5.17]{Clausen} predicts that for any smooth $p$-adic analytic Artin stack $X$,
\'etale-locally the associated light condensed anima $L(X)$ is a light profinite anima.

\begin{theoremA}[{\cite[Conjecture~5.17]{Clausen}} -- Theorem \ref{thm:clausen-conj} in the text]\label{thm:conj517_intro}
Let $X$ be a smooth $p$-adic analytic Artin stack. Then \'etale-locally the associated light condensed anima $L(X)$ is a light profinite anima.
\end{theoremA}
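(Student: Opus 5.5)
The strategy is to follow the reductions sketched in the abstract. Since $X$ is smooth Artin, étale-locally we may present $X$ as the quotient of a smooth groupoid $R \rightrightarrows U$ in $p$-adic manifolds, with $s,t$ submersions, hence open maps. Because $L$ is a left Kan extension, it commutes with geometric realizations. So $L(X)$ is computed, étale-locally, as the realization of the groupoid of locally profinite (totally disconnected, locally compact, second countable) spaces $L(R)\rightrightarrows L(U)$ with open source and target maps.

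\textbf{Step 1: reduction to the compact case.} We first localize further so that both object and morphism spaces are light profinite. For this I would prove a groupoid version of van Dantzig's theorem. Given a point $x\in U$, choose a compact open neighbourhood $V\ni x$ in $U$. I then want to find a compact open subgroupoid of $R|_V$ containing the identities near $x$, playing the role of the compact open subgroup that van Dantzig produces in a t.d.l.c.\ group.

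The intended construction is to take a compact open neighbourhood $W$ of the identity section over $V$ in $R|_V$, and then replace it by the subgroupoid it generates after shrinking. The key point is that compactness and openness of $s,t$ let one run the classical argument uniformly: find $W$ with $W\cdot W\subseteq W$ and $W^{-1}=W$.

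The map from the realization of this compact subgroupoid to $L(X)$ should be étale, because it is locally of the form an open embedding of a compact open piece of $U$. This reduces Theorem~\ref{thm:conj517_intro} to the following statement:
\begin{quote}
The geometric realization of a groupoid with light profinite objects and morphisms and open $s,t$ is light profinite.
\end{quote}

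\textbf{Step 2: simplifying the compact groupoid.} Next, for a light profinite groupoid $G_1\rightrightarrows G_0$ with open structure maps, I would build a closed skeleton $S\subseteq G_0$ meeting every orbit. Openness of $s,t$ makes the orbit equivalence relation open, and hence the orbit space is profinite and Hausdorff. One can then choose a closed section $S$ via a light profinite section argument, using lightness for metrizability and a countable clopen basis. Restricting to $S$ gives a Morita equivalent groupoid, so the realization is unchanged.

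On $S$ the groupoid becomes essentially a family of stabilizer groups over a profinite base. These groups are profinite, and the openness of $s$ ensures local constancy of their structure. I would then quotient by a clopen normal subgroupoid, i.e.\ a neighbourhood of the identity, to make the stabilizers finite. Passing to the inverse limit over such clopen normal subgroupoids, the realization becomes a limit of realizations of families of finite groups over $S$.

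\textbf{Step 3 and main obstacle.} What remains is to show that a profinite family of finite groups $H\to S$, meaning a group object in profinite sets over $S$ with finite fibers and open projection, is an inverse limit of finite families of finite groups, i.e.\ group objects over finite sets. Granting this, each realization is a finite disjoint union of $B(\text{finite group})$, which is a $\pi$-finite anima. The limit is then a light profinite anima, since countability is automatic from lightness.

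I expect this step to be the hardest, because the group law must be compatible with each finite quotient of $S$. My approach is to use that the projection $H\to S$ is open with finite fibers, hence a local homeomorphism, so $H$ is locally $S'\times F$ for clopen $S'\subseteq S$ and a finite set $F$. The multiplication is a continuous map between such products, and is therefore locally constant in the $S$-direction on a clopen partition. Refining these partitions along a countable cofinal sequence of clopen partitions of $S$ gives the inverse system of finite families of finite groups.
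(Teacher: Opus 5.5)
Your outline follows the paper's architecture, but two of your steps fail as stated. \textbf{Step 1.} The classical van Dantzig argument ends with ``an open subgroup is closed, hence compact'', and this is false for groupoids. Take the $p$-adic Lie groupoid $\mathbb{Z}_p\times\mathbb{Z}/2\rightrightarrows\mathbb{Z}_p$, a trivial group bundle. The set $H=(\mathbb{Z}_p\times\{0\})\cup\bigl((\mathbb{Z}_p\setminus\{0\})\times\{1\}\bigr)$ is an open, symmetric, wide subgroupoid with $G_1\cdot H\subseteq G_1$, where $G_1$ is compact open. So $H$ is exactly the kind of thing ``shrink and generate'' can output, yet $(0,1)\in\overline{H}\setminus H$, so $H$ is not compact. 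Generating gives you something open inside $W$, but with no control on closedness. Moreover, ``find $W$ with $W\cdot W\subseteq W$, $W^{-1}=W$'' is the conclusion you want, not a method for getting it. The paper instead produces a \emph{clopen} subgroupoid directly. It takes a compact open symmetric $F\supseteq G_0$ with $F\cdot F\subseteq K$. It lets $M$ be the first projection of $\{(g,y)\in F\times_{G_0}F : gy\notin F\}$; $M$ is clopen in $F$ because that set is compact and the projection is a base change of the open map $s$. It then sets $H_1=F\setminus(M\cup M^{-1})$, and closure under composition follows from associativity.

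\textbf{Step 3.} Your premise that the stabilizer family over the skeleton has open projection, and hence is locally $S'\times F$, is false. Openness of $s$ on $\mathcal{G}$ does not pass to the closed subgroupoid $\mathcal{X}$. Already for $[\mathbb{Z}_p/\{\pm1\}]$ with $p$ odd, the groupoid $\{\pm1\}\times\mathbb{Z}_p\rightrightarrows\mathbb{Z}_p$ is compact with open $s,t$. On any skeleton the isotropy is $\{\pm1\}$ at $0$ and trivial elsewhere, so the arrow $(-1,0)$ is isolated in $\mathcal{X}_1$ while $\{0\}$ is not open in $\mathcal{X}_0$. The unit section is itself a clopen normal subgroupoid here, so $E=\mathcal{X}$ appears in the inverse system and the jumping fibre cannot be quotiented away. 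Isotropy is only semicontinuous, and the paper handles this with collapse maps (Lemma~\ref{collapse}). One chooses disjoint clopen tubes around the finitely many elements of $E_x$, compatible with multiplication, and a clopen $U\ni x$ with $s^{-1}(U)$ inside the tubes, using that $s$ is closed. This gives a continuous functor $\Phi\colon E\to E_x$ which is the identity on $E_x$ but only a homomorphism $E_y\to E_x$ for $y\in U$. These functors, together with clopen partitions of $E_0$ (which are functors by skeletality), separate points and exhibit $E$ as a limit of finite groupoids.

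Finally, your claim that ``the realization becomes a limit of realizations'' is not formal, since geometric realization does not commute with cofiltered limits. The paper proves that $|\varprojlim_i F_i|\to\varprojlim_i|F_i|$ is an equivalence by checking $\pi_0$ and $\pi_1$ and invoking conservativity of $\Gamma$ on coherent $1$-truncated pyknotic anima (Theorem~\ref{thm:Gamma-conservative-1type}). It then deduces lightness from Proposition~\ref{light}.
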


\smallskip
In this paper we prove Conjecture~5.17.
We work throughout in the setting of pyknotic spaces $\PykS$ \cite{BH19}, which is convenient for several reasons.
First, light condensed anima embed into $\PykS$ by restriction along the light site.
Second, the global sections functor on $\PykS$ admits both a left and a right adjoint; this will be used repeatedly in descent and detection arguments.
For instance, we will use the following conservativity statement.

\begin{theoremB}[Theorem \ref{thm:Gamma-conservative-1type} in the text]
Let $f:X\to Y$ be a morphism between \emph{coherent} $1$-truncated objects in $\PykS$.
Then $f$ is an equivalence if and only if $\Gamma(f):\Gamma(X)\to\Gamma(Y)$ is an equivalence of spaces.
\end{theoremB}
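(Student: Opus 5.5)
The plan is to reduce, by passing to diagonals, to statements about coherent $0$-truncated objects. These will be handled by the classical fact that a continuous bijection of compact Hausdorff spaces is a homeomorphism.

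I will use three structural inputs about $\PykS$. First, $\Gamma$ is evaluation at the point $*$. Since $\Gamma$ has a left adjoint it preserves all limits, and since it also has a right adjoint it preserves colimits. In particular $\Gamma$ commutes with fibre products and with truncations, e.g.\ $\Gamma(\pi_0 X)=\pi_0\Gamma(X)$. Second, coherent objects are stable under finite limits. Third, following \cite{BH19}, the coherent $0$-truncated objects of $\PykS$ are exactly the compact Hausdorff spaces. The main thing I will need to verify carefully is this identification, including that subobjects and fibre products of coherent objects stay coherent in the relevant sense, e.g.\ that a coherent subobject of a compact Hausdorff $S$ is a closed subspace. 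Granting it, everything else is formal.

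\emph{Step 1: the $(-1)$-truncated case.} Let $g:A\to B$ be a monomorphism between coherent objects with $\Gamma(g)$ an equivalence. It suffices to show that $g$ is an effective epimorphism. So take any compact Hausdorff (or extremally disconnected) $S$ with a map $S\to B$, and set $A_S=A\times_B S$. This object is coherent and is a subobject of $S$, hence a closed subspace $A_S\subseteq S$. Since $\Gamma$ preserves fibre products and $\Gamma(A)\to\Gamma(B)$ is bijective, every point of $S$ lies in $A_S$. A closed subspace containing every point is all of $S$, so $A_S=S$, and therefore $g$ is an equivalence.

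\emph{Step 2: the $0$-truncated case.} Let $g:A\to B$ be a $0$-truncated map between coherent objects with $\Gamma(g)$ an equivalence. The diagonal $A\to A\times_B A$ is a monomorphism between coherent objects. Because $\Gamma$ preserves fibre products, $\Gamma$ of this diagonal is the diagonal of the equivalence $\Gamma(g)$, hence an equivalence. Step~1 then shows $g$ is a monomorphism, and applying Step~1 to $g$ itself shows $g$ is an equivalence.

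\emph{Step 3: the statement.} Let $f:X\to Y$ be a map of coherent $1$-truncated objects with $\Gamma(f)$ an equivalence. Its diagonal $X\to X\times_Y X$ is $0$-truncated, and both sides are coherent. As before, $\Gamma$ of the diagonal is an equivalence. By Step~2 the diagonal is an equivalence, so $f$ is a monomorphism, i.e.\ $(-1)$-truncated. Step~1 then makes $f$ an equivalence. The converse direction is trivial.

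The expected obstacle is the input used in Step~1: that a coherent subobject of a compact Hausdorff pyknotic set is a closed subspace, so that pointwise surjectivity forces equality. Here I would use that quasi-compact, quasi-separated pyknotic sets are compact Hausdorff spaces. I would then check that monomorphisms between such objects are closed embeddings, which follows because a continuous injection from a compact space to a Hausdorff space is a closed embedding.
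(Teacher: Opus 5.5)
Your argument is correct, and it takes a genuinely different route from the paper's.

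\textbf{The paper's route.} It works with groupoid presentations. It chooses coherent $0$-truncated atlases $U\to X$ and $V\to Y$, forms the common refinement $W=U\times_Y V$, and obtains a map of groupoid objects $\beta_\bullet\colon W_\bullet\to V_\bullet$ whose realization is identified with $f$. It then checks two things:
\begin{itemize}
\item internal full faithfulness, using the $0$-truncated conservativity result \cite[Corollary 3.10]{DerivedStoneEmbedding};
\item internal essential surjectivity, using that surjections of compact Hausdorff spaces are effective epimorphisms.
\end{itemize}
It concludes with Theorem~\ref{functor_theorem}.

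\textbf{Your route.} You induct on the truncation level of the map via diagonals: $f$ is an equivalence iff it is an effective epimorphism with invertible diagonal. Everything then reduces to one base case: a monomorphism between coherent objects that is bijective on $\Gamma$. You settle it by pulling back to test objects $S$ and using that a continuous bijection of compact Hausdorff spaces is a homeomorphism.

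\textbf{Comparison.} Your argument avoids choosing atlases and the refinement bookkeeping, including the identification $f\simeq|\beta_\bullet|\circ|\alpha_\bullet|^{-1}$. It does not need Theorem~\ref{functor_theorem}. In effect it reproves \cite[Corollary 3.10]{DerivedStoneEmbedding}: that is your Step 2 with $A,B$ both $0$-truncated. The same induction also gives the statement verbatim for coherent $n$-truncated objects for every $n$. The paper's argument is tied to the $1$-truncated case through its use of $0$-truncated \v{C}ech nerves. The paper's version, in turn, fits the groupoid-presentation framework it reuses later, for instance in the skeletal replacement lemma.

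\textbf{The input you must supply.} As you note, you need that coherent $0$-truncated pyknotic objects are exactly the compact Hausdorff spaces, with $T\mapsto\underline{T}$ fully faithful. Then a coherent subobject of $\underline{S}$ is a continuous injection $T\to S$, hence a closed embedding. The paper relies on the same identification in its Step 4(ii), so this is not an extra assumption relative to the paper.
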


Finally, since $\Pro(\mathcal{S}_\pi)$ embeds fully faithfully into $\PykS$ (see \cite[Example 3.3.10]{BH19}), we can form profinite completions inside $\PykS$.
To conclude lightness we use the following characterization.

\begin{propositionC}[Proposition \ref{light} in the text]
Let $X\in \PykS$ be a profinite anima, i.e.\ $X\in \Pro(\mathcal{S}_\pi)$.
Assume that $X$ lies in the essential image of the inclusion
\[
\CondAn^{\mathrm{light}}\hookrightarrow \PykS .
\]
Then $X$ is a \emph{light profinite anima}.
\end{propositionC}

\medskip
The proof has two main parts.

\smallskip\noindent
\textbf{Step 1: Reduction to a compact groupoid presentation.}
We first pass to an \'etale cover of $X$ on which the stack admits a groupoid presentation whose object space is compact Hausdorff.
We then prove a generalization of Van Dantzig theorem for groupoids.

\begin{theoremD}[Theorem \ref{VanDan} in the text]
Let $G \rightrightarrows G_0$ be a groupoid object in $\Pyk(\mathcal S)$ such that the source and target maps
\[
s,t: G_1\to G_0
\]
are \emph{open maps}. Assume $G_1$ is locally profinite and $G_0$ is profinite.
Then for every \emph{open} neighborhood $U$ of $G_0$ in $G_1$, there exists a \emph{Hausdorff compact open} subgroupoid
$H \subseteq G$ such that
\[
G_0 \subseteq H_1 \subseteq U.
\]
(In particular, $H$ is wide in the sense that it contains all units.)
\end{theoremD}

This allows us to replace the resulting groupoid by a compact, Hausdorff, wide subgroupoid.
After this reduction, both the object and morphism spaces of the groupoid are light profinite.

\smallskip\noindent
\textbf{Step 2: Realization and Profiniteness.}
For the compact groupoid obtained in Step~1, we show that its geometric realization is profinite.
We begin by reducing to a \emph{closed skeleton}.

\begin{lemmaE}[Lemma \ref{lem:skeletal-replacement} in the text]
Let $\mathcal{G}$ be a groupoid object whose object and morphism spaces are light profinite, and whose source and target maps are open.
Then there exists a groupoid $\mathcal{X}$ and a functor $p:\mathcal{X}\to \mathcal{G}$ such that:
\begin{enumerate}[label=(\roman*),leftmargin=2.2em]
    \item $\Obj(\mathcal{X})$ and $\Mor(\mathcal{X})$ are light profinite;
    \item the induced map on geometric realizations
    \[
    |p|:\,|\mathcal{X}|\longrightarrow |\mathcal{G}|
    \]
    is an equivalence;
    \item $\mathcal{X}$ is skeletal in the sense that for any two distinct objects
    $x\neq y$ in $\Obj(\mathcal{X})$ one has $\Hom_{\mathcal{X}}(x,y)=\varnothing$.
\end{enumerate}
\end{lemmaE}

Using Theorem~\ref{VanDan}, we construct a basis of clopen subgroupoids on the skeleton, and refine it further to a basis of normal clopen subgroupoids.
Passing to the corresponding quotients reduces the remaining key point to the following observation:
A profinite family of finite groups can be expressed as an inverse limit of finite families of finite groups.

\medskip
As Clausen notes immediately after stating the conjecture, Theorem~\ref{thm:conj517_intro} implies that smooth $p$-adic analytic Artin stacks are \emph{$!$-good}.

\smallskip
\noindent\textbf{Organization.}
Section~2 recalls the definitions of light condensed anima, \'etale morphisms, and the statement of Conjecture~5.17.
Section~3 proves the technical results used later, including the profiniteness criterion for groupoids.
Section~4 combines these inputs to prove the conjecture.

\subsection*{Acknowledgments}
First and foremost, I would like to express my heartfelt gratitude to Akhil Mathew for many insightful discussions and for carefully going over earlier versions of my work, pointing out numerous errors along the way. I am equally grateful to my PhD advisor, Shachar Carmeli, for his guidance and support. I owe special thanks to Yuval Lotenberg for many helpful discussions and for providing detailed feedback on my draft. I wish to extend my appreciation to Tomer Schlank, Shauli Ragimov, and Noam Zimhoni for useful conversations.
Finally, I would like to thank Dustin Clausen for formulating the conjecture and bringing it to my attention, and for pointing out an important issue in an earlier version.

\section{Background}
In this section, we recall the minimal background from \cite{Clausen} needed to state Clausen’s conjecture in a self-contained way. We do not claim originality for any of the statements in this section.

\subsection{Light condensed anima}

We begin by recalling the notion of a light profinite set and a light condensed anima

\begin{definition}[{\cite[Definition 4.2.]{Clausen}}]
A \emph{light profinite set} is a topological space which can be written as a sequential inverse limit
of finite discrete spaces.
\end{definition}

Let $\Pro(\mathrm{fin})^{\mathrm{light}}$ denote the category of light profinite sets with continuous maps.
We can now define light condensed anima as sheaves on this site.

\begin{definition}[{\cite[Definition 4.6.]{Clausen}}]
A \emph{light condensed anima} is a hypercomplete sheaf for the effective topology on $\Pro(\mathrm{fin})^{\mathrm{light}}$, i.e.\ a functor
\[
X \colon (\Pro(\mathrm{fin})^{\mathrm{light}})^{\mathrm{op}} \longrightarrow \mathrm{An}
\]
satisfying the following two conditions:
\begin{enumerate}
\item For every finite family $\{S_i\}_{i\in I}$ of light profinite sets, the natural map
\[
X\Bigl(\bigsqcup_{i\in I} S_i\Bigr) \longrightarrow \prod_{i\in I} X(S_i)
\]
is an equivalence.
\item For every hypercover $S_\bullet \to S$ in $\Pro(\mathrm{fin})^{\mathrm{light}}$, the natural map
\[
X(S) \longrightarrow \varprojlim_{[n]\in\Delta} X(S_n)
\]
is an equivalence.
\end{enumerate}
The full subcategory of such functors is denoted $\mathrm{CondAn}^{\mathrm{light}}$.
\end{definition}
We next record the canonical functors relating light condensed anima to pyknotic spaces.
\begin{remark}
There is a natural fully faithful inclusion
\[
i:\Pro(\mathrm{fin})^{\mathrm{light}} \hookrightarrow \Pro(\mathrm{fin}).
\]
Since pyknotic spaces can be described as hypercomplete sheaves on $\Pro(\mathrm{fin})$, $i$ induces a restriction functor
\[
i^*:\Pyk(\mathcal{S}) \to \mathrm{CondAn}^{\mathrm{light}},
\]
which admits a fully faithul left adjoint, given by left Kan extension along $i$, and denoted
\[
i_*:\mathrm{CondAn}^{\mathrm{light}} \to \Pyk(\mathcal{S}).
\]

\end{remark}

\subsection{\texorpdfstring{$p$}{p}-adic analytic smooth Artin stacks}
We now introduce the notion of a light profinite anima, which will be central in formulating Clausen’s conjecture.
\begin{definition}[{\cite[Example 4.24.]{Clausen}}]
An object $Y \in \mathrm{CondAn}^{\mathrm{light}}$ is called a \emph{light profinite anima}
if it can be written as a sequential inverse limit of $\pi$-finite anima.
\end{definition}

Equivalently, light profinite anima form the essential image of the category of countable
pro-objects in $\pi$-finite anima under the natural fully faithful embedding into
$\mathrm{CondAn}^{\mathrm{light}}$ (\cite[Lemma 4.22]{Clausen}).
We now construct, from a sheaf of anima on $\Man_F$, an associated light condensed anima.
Concretely, it is obtained as the left Kan extension of the functor sending a manifold to its underlying topological space.

\begin{proposition}[{\cite[Proposition 5.3]{Clausen}}]
Let $F$ be a local field.
\begin{enumerate}
\item There is a unique colimit-preserving functor
\[ 
L:\Sh(\Man_F)\to \mathrm{CondAn}^{\mathrm{light}}
\]
whose restriction to $\Man_F$ is the functor
\[
M \mapsto M^{\mathrm{top}} : \Man_F \to \mathrm{CondAn}^{\mathrm{light}},
\]
where $M^{\mathrm{top}}$ denotes the underlying topological space of $M$.
By abuse of notation, denote this functor also by
\[
X \mapsto \underline{X}
\]
(or sometimes, if we're being really abusive, just $X\mapsto X$ with the context
being implicit).
\item This functor $X\mapsto \underline{X}$ preserves pullbacks of diagrams of the form
$X \to S \leftarrow Y$ whenever $X\to S$ can, locally on $S$, be written as a colimit
of submersions of manifolds $X_i \to S$.
\end{enumerate}
\end{proposition}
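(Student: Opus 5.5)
The plan is to treat the two parts separately. For (1) I will use the universal property of sheaf $\infty$-topoi. For (2) I will combine universality of colimits in both topoi with the fact that for honest manifolds everything is computed on underlying topological spaces.

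\textbf{Part (1).} First, the functor $M\mapsto M^{\mathrm{top}}$ lands in $\mathrm{CondAn}^{\mathrm{light}}$. A manifold over a local field $F$ is Hausdorff, second countable and locally profinite. Its underlying space is therefore a countable disjoint union of compact open balls, each of which is a light profinite set. Hence $M^{\mathrm{top}}$ is a coproduct of representables of the light site.

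Next, $\Sh(\Man_F)$ is sheaves on the (essentially small) site of manifolds with open covers. A colimit-preserving functor out of it is determined by its restriction along Yoneda, and uniqueness follows from this. Existence reduces, via left Kan extension followed by sheafification, to checking that $M\mapsto M^{\mathrm{top}}$ sends covering sieves to colimits. Concretely, for an open cover $\{U_i\to M\}$ I will show two things:
\begin{enumerate}
\item $\bigsqcup U_i^{\mathrm{top}}\to M^{\mathrm{top}}$ is an effective epimorphism in $\mathrm{CondAn}^{\mathrm{light}}$. Given $S\to M$ with $S$ light profinite, the preimages of the $U_i$ form an open cover of $S$. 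This cover refines to a finite clopen partition, so the map lifts after no refinement at all.
\item The \v{C}ech nerve of this map is computed by the intersections $U_{i_0}\cap\dots\cap U_{i_n}$. This holds because the embedding of locally compact Hausdorff spaces into condensed sets preserves finite limits.
\end{enumerate}
In an $\infty$-topos an effective epimorphism is the colimit of its \v{C}ech nerve, which gives the descent condition. If $\Sh$ is meant hypercomplete, I also have to check hypercovers. I expect this to be the delicate point. I would argue that the target is hypercomplete and that representables in $\Man_F$ have finite covering dimension (they are locally profinite), so \v{C}ech descent suffices, e.g.\ by the standard comparison for spaces of finite covering dimension.

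\textbf{Part (2).} Pullbacks are local on $S$. Both $L$ and base change preserve colimits, and effective epimorphisms are preserved by $L$. So I may assume $X=\operatorname{colim}_i X_i$ with each $X_i\to S$ a submersion of manifolds. By universality of colimits in $\Sh(\Man_F)$ we have $X\times_S Y=\operatorname{colim}_i X_i\times_S Y$. Applying $L$ and using universality of colimits in $\mathrm{CondAn}^{\mathrm{light}}$, it suffices to treat a single submersion $X\to S$.

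Writing $Y$ as a colimit of representables $N\to S$ and using universality once more, I reduce to the case $X,S,N$ all manifolds with $X\to S$ a submersion. Then $X\times_S N$ exists in $\Man_F$ by the implicit function theorem, and Yoneda identifies it with the sheaf pullback. Its underlying space is the topological fibre product $X^{\mathrm{top}}\times_{S^{\mathrm{top}}}N^{\mathrm{top}}$. Finally, the passage from locally compact Hausdorff spaces to condensed anima preserves finite limits, which gives $L(X\times_S N)\simeq \underline{X}\times_{\underline{S}}\underline{N}$.

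The main obstacle I anticipate is the descent verification in (1), specifically the hypercompleteness issue. Part (2) is formal once the local-on-$S$ reduction is set up carefully, i.e.\ once one checks that $L$ preserves the relevant effective epimorphisms so that equivalence of pullbacks can be tested after base change along a cover.
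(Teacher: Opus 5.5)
The paper gives no proof of this statement: it is quoted from Clausen as background, and Section~2 disclaims originality. Your proposal therefore has to stand on its own.

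\textbf{What works.} Part~(1) is sound. Checking that $\bigsqcup U_i^{\mathrm{top}}\to M^{\mathrm{top}}$ is an effective epimorphism whose \v{C}ech nerve is given by the intersections is exactly what the universal property of the sheaf topos requires, and your covering-dimension remark disposes of the hypercompleteness issue. The base case of Part~(2) is also correct: $S$ and $N$ manifolds, $X\to S$ a single submersion, followed by the colimit reductions in $X$ and in $Y$.

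\textbf{The gap} is the first reduction of Part~(2). Knowing that $L(U)\to L(S)$ is an effective epimorphism lets you test the comparison map
\[
\phi\colon L(X\times_S Y)\longrightarrow L(X)\times_{L(S)}L(Y)
\]
after base change along it. But to identify $\phi\times_{L(S)}L(U)$ with the comparison map for $X\times_S U\to U\leftarrow Y\times_S U$, you need $L(X)\times_{L(S)}L(U)\simeq L(X\times_S U)$, and likewise with $X\times_S Y$ in place of $X$. These are themselves instances of the proposition, for pullback along the cover map $U\to S$, about which nothing is known at that stage. So ``$L$ preserves effective epimorphisms'' is not the missing ingredient, and the reduction as written is circular.

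\textbf{The fix} is to reverse the order. First prove the global case exactly as you do: $S$ a manifold, $X\to S$ a colimit of submersions, $Y$ arbitrary.

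If ``locally on $S$'' means an open cover $\{S_\ell\}$ of a manifold $S$, then each $S_\ell\hookrightarrow S$ is itself a submersion. The global case with the two legs exchanged gives $L(Z\times_S S_\ell)\simeq L(Z)\times_{L(S)}L(S_\ell)$ for every $Z\to S$, and your base-change test becomes legitimate.

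If $S$ is an arbitrary sheaf with a cover $U\to S$ by manifolds and \v{C}ech nerve $U_\bullet$, argue as follows.
\begin{enumerate}
\item The global case over $U_0$ shows that $L(X\times_S U_\bullet)\to L(U_\bullet)$ is a cartesian transformation.
\item Since $L$ preserves colimits, its colimit is $L(X)\to L(S)$. Descent in the $\infty$-topos $\mathrm{CondAn}^{\mathrm{light}}$ \cite[Theorem~6.1.3.9]{HTT} then gives $L(X)\times_{L(S)}L(U_0)\simeq L(X\times_S U_0)$.
\item Write $Y\simeq|Y\times_S U_\bullet|$ and apply the global case over $U_0$ levelwise:
\[
L(X)\times_{L(S)}L(Y\times_S U_n)\simeq L(X\times_S U_0)\times_{L(U_0)}L(Y\times_S U_n)\simeq L(X\times_S Y\times_S U_n).
\]
\item Take the colimit over $n$.
\end{enumerate}
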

We will now define our main object of interest, namely \emph{$F$-analytic smooth Artin stacks}, i.e.\ stacks admitting a manifold chart.
Recall that a morphism \(Y\to X\) in \(\Sh(\Man_F)\) is a \emph{representable submersion} if for every map \(N\to X\) from an \(F\)-analytic manifold \(N\), the pullback \(Y\times_X N \to N\) is a submersion of \(F\)-analytic manifolds. 
Surjectivity is understood as being an \emph{effective epimorphism} in the \(\infty\)-topos \(\Sh(\Man_F)\).

\begin{definition}[{\cite[Definition~5.11]{Clausen}}]
Let \(F\) be a local field.
An \emph{\(F\)-analytic smooth Artin stack} is a sheaf
\(X \in \Sh(\Man_F)\) admitting a surjective representable submersion
\[
p\colon M \longrightarrow X
\]
from an \(F\)-analytic manifold \(M\).
\end{definition}

\subsection{\'Etale locality}

The category $\mathrm{CondAn}^{\mathrm{light}}$ carries an intrinsic notion of \'etale morphisms.
Given an object $S \in \mathrm{CondAn}^{\mathrm{light}}$, one may speak of properties holding
\emph{\'etale-locally on $S$}, meaning that there exists an \'etale cover $\{U_i \to S\}$
such that the property holds after base change to each $U_i$.
We will now give two equivalent characterizations of \'etale morphisms.

\begin{theorem}[{\cite[Theorem~4.15]{Clausen}}]
\label{etale}
Let $f\colon X \to Y$ be a map in $\mathrm{CondAn}^{\mathrm{light}}$. The following properties are equivalent.
\begin{enumerate}
\item  For all towers $(S_n)_n$ of light profinite sets with limit $S=\varprojlim_n S_n$, the map $f$ has the unique right lifting property with respect to the map (of pro-objects)
\[
S \to \text{``}\!\varprojlim_n\! \text{'' } S_n.
\]
In other words,
\[
\colim_n X(S_n)\;\xrightarrow{\sim}\; X(S)\times_{Y(S)}\Bigl(\colim_n Y(S_n)\Bigr).
\]
\item The functor $(\Pro(\mathrm{fin})^{\mathrm{light}}_{/Y})^{\mathrm{op}}\to \mathrm{An}$ represented by $f\colon X\to Y$ preserves countable filtered colimits.

\end{enumerate}
\end{theorem}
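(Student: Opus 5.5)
The two conditions are two ways of saying the same thing. I would prove the equivalence by rewriting both sides in terms of mapping anima of pro-objects over $Y$, using only that light profinite sets are cocompact in light condensed anima on the relevant diagrams.

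First I would make condition (2) explicit. Let $S=\varprojlim_n S_n$ be a tower of light profinite sets with a compatible map $S\to Y$. The functor represented by $f$ sends an object $T\to Y$ of $\Pro(\mathrm{fin})^{\mathrm{light}}_{/Y}$ to the fiber $X(T)\times_{Y(T)}\{T\to Y\}$. A countable filtered colimit in $(\Pro(\mathrm{fin})^{\mathrm{light}}_{/Y})^{\mathrm{op}}$ is a countable cofiltered limit of light profinite sets over $Y$. By cofinality it is enough to treat sequential limits $\varprojlim_n T_n$.

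The main point is a lemma: a structure map $T=\varprojlim T_n\to Y$ is essentially the same as a compatible family $T_n\to Y$ up to passing along the tower. More precisely,
\[
\colim_n Y(T_n)\to Y(T)
\]
need not be an equivalence, and the slice category only contains those $T\to Y$ which factor through some stage. Condition (2) therefore says: for every tower together with maps $T_n\to Y$ compatible from some stage on, the map
\[
\colim_n \bigl(X(T_n)\times_{Y(T_n)}\{*\}\bigr)\to X(T)\times_{Y(T)}\{*\}
\]
is an equivalence.

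Since filtered colimits in $\mathrm{An}$ commute with finite limits, and hence with fibers, this is exactly the fiberwise form of the map in (1),
\[
\colim_n X(S_n)\to X(S)\times_{Y(S)}\colim_n Y(S_n),
\]
taken over points of $\colim_n Y(S_n)$. A map of anima over $\colim_n Y(S_n)$ is an equivalence iff it is an equivalence on all fibers, so (1) and (2) agree. I would also check that the unique right lifting property against $S\to{}$``$\varprojlim$''$S_n$ unwinds to this same map: maps from the pro-object into $Y$ are $\colim_n Y(S_n)$, and maps from $S$ are $Y(S)$. That is a formal computation of mapping anima in $\Pro$.

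The step I expect to be delicate is this last unwinding. One has to be careful that the lifting is a statement in a category containing both $\mathrm{CondAn}^{\mathrm{light}}$ and pro-objects, so that ``maps from a pro-object'' is defined by the colimit formula. I would simply take that formula as the definition, as Clausen does in \cite[Theorem~4.15]{Clausen}. With that convention, the whole proof reduces to the commutation of filtered colimits with pullbacks in $\mathrm{An}$.
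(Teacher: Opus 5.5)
The paper gives no proof of this statement; it quotes it from \cite[Theorem~4.15]{Clausen}. Your argument is the natural formal unwinding and is correct in substance. You take fibers of the map in (1) over points of $\colim_n Y(S_n)$. Every such point comes from some $y_m\in Y(S_m)$, and filtered colimits commute with pullbacks in $\mathrm{An}$. So (1) is equivalent to asking that $\colim_{n\ge m} X(S_n)\times_{Y(S_n)}\{y_m\}\to X(S)\times_{Y(S)}\{y_m\}$ be an equivalence for all $m$ and $y_m$. That is (2) for towers, and cofinality reduces countable filtered diagrams to towers.

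Two statements in your write-up are wrong and should be fixed, although the argument does not depend on them.

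First, your opening claim that light profinite sets are cocompact in $\mathrm{CondAn}^{\mathrm{light}}$ on the relevant diagrams is false. If $\colim_n Y(S_n)\to Y(S)$ were always an equivalence, condition (1) would hold for every $f$. For example, it fails for $S\to *$ with $S$ infinite, since $\mathrm{id}_S$ does not factor through any $S_n$. You never use this claim, and later you assert its opposite.

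Second, your ``lemma'' that the slice $\Pro(\mathrm{fin})^{\mathrm{light}}_{/Y}$ only contains those $T\to Y$ factoring through some stage is wrong. The slice contains every pair $(T,y)$ with $y\in Y(T)$. What is true, and what your argument needs, is the following.
\begin{itemize}
\item A tower in the slice lying over $(T_n)_n$ is a point of $\lim_n Y(T_n)\simeq Y(T_0)$. The diagram $n\mapsto Y(T_n)$ is indexed by a category with an initial object, so its limit is the value at $0$.
\item The limit of this tower in the slice is $(T,\,y_0|_T)$. This holds because the right fibration $\Pro(\mathrm{fin})^{\mathrm{light}}_{/Y}\to\Pro(\mathrm{fin})^{\mathrm{light}}$ creates limits of weakly contractible diagrams.
\end{itemize}
Hence the cofiltered limits that (2) tests are exactly the objects $T\to Y$ that factor through a finite stage. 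This is what matches (2) with the fibers of (1). You should state this explicitly, since it is the only non-tautological point in the proof.
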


\begin{definition}[{\cite[Definition~4.16]{Clausen}}]
A map $X\to Y$ in $\mathrm{CondAn}^{\mathrm{light}}$ is called \emph{\'etale} if it satisfies the equivalent properties listed in the above theorem.
\end{definition}
Here are some useful properties of \'etale maps:
\begin{lemma}[{\cite[Lemma~4.17]{Clausen}}] \label{etaleprop}
\begin{enumerate}
\item The collection of \'etale maps of light condensed anima is closed under pullbacks and compositions, and contains all isomorphisms. It is also closed under passage to diagonals; equivalently, if $X\to Y\to Z$ are maps of light condensed anima such that $X\to Z$ and $Y\to Z$ are \'etale, then $X\to Y$ is \'etale.
\item For $S\in \mathrm{CondAn}^{\mathrm{light}}$, the full subcategory of $\mathrm{CondAn}^{\mathrm{light}}_{/S}$ spanned by the \'etale maps is closed under finite limits and arbitrary colimits.
\item A map $X\to Y$ of light condensed anima is \'etale if and only if its $n$th relative Postnikov truncation is \'etale for all $n$.
\item If $\{S_i\to S\}_{i\in I}$ is a collection of maps of light condensed anima such that $\bigsqcup_{i\in I} S_i\to S$ is surjective, then a map $X\to S$ is \'etale if and only if the pullback $X\times_S S_i\to S_i$ is \'etale for all $i\in I$.
\end{enumerate}
\end{lemma}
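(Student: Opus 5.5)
The plan is to work throughout with the pullback reformulation of characterization (1) in Theorem~\ref{etale}. For a tower $(S_n)$ with limit $S$, the map $f\colon X\to Y$ is \'etale iff the square with corners $\colim_n X(S_n)$, $X(S)$, $\colim_n Y(S_n)$, $Y(S)$ is a pullback in $\mathrm{An}$. The key tool is that in $\mathrm{An}$ filtered colimits commute with finite limits, together with the pasting law for pullbacks.

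\textbf{Part (1).} For a base change $X'=X\times_Y Y'$, evaluation is pointwise, and filtered colimits commute with the fibre product. So the square for $X'\to Y'$ is the base change of the square for $X\to Y$ along the square for $Y'$, and is therefore cartesian. Composition follows by pasting two cartesian squares, and isomorphisms are clear. For the two-out-of-three statement, suppose $X\to Z$ and $Y\to Z$ are \'etale. The square for $X\to Y$ pasted with the square for $Y\to Z$ gives the square for $X\to Z$. Both the outer square and the right-hand square are cartesian, so the left-hand square is cartesian by the pasting law. Closure under diagonals is equivalent to this: the map $X\to X\times_Z X$ is a map between objects \'etale over $Z$, and conversely $X\to Y$ factors as $X\to X\times_Z Y\to Y$, where the first map is a base change of the diagonal of $Y\to Z$ and the second is a base change of $X\to Z$.

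\textbf{Part (2).} Finite limits in the slice are handled in the same way, via characterization (2): a finite limit of functors that preserve countable filtered colimits again preserves them. Colimits are the main obstacle, because colimits in $\mathrm{CondAn}^{\mathrm{light}}$ involve hypersheafification.
\begin{itemize}
\item First show that the presheaf colimit of \'etale objects over $S$ satisfies the pullback condition. This uses universality of colimits in $\mathrm{An}$ (colimits over a fixed base are stable under pullback) together with the commutation of colimits with filtered colimits.
\item Then show that sheafification preserves the condition. I would try to do this by observing that a hypercover of a light profinite $S=\varprojlim S_n$ can be refined to a levelwise limit of hypercovers of the finite stages $S_n$. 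The sheafified values are then computed compatibly with the colimit over $n$.
\end{itemize}

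\textbf{Part (3).} Relative Postnikov truncation over $Y$ commutes with filtered colimits and with base change in $\mathrm{An}$. So if $f$ is \'etale, applying $\tau_{\le n}$ relative to the cartesian square shows that every truncation is \'etale. For the converse, fix a point of $X(S)\times_{Y(S)}\colim_n Y(S_n)$. The homotopy of the fibres of the comparison map in degree $k$ is detected by the $(k+1)$-st relative truncation, for which the comparison is an equivalence. Hence the comparison map is an equivalence by Whitehead's theorem in $\mathrm{An}$.

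\textbf{Part (4).} The ``only if'' direction follows from part (1). For the ``if'' direction, start with a point of $X(S)\times_{S(S)}\colim_n S(S_n)$ represented at some finite stage $S_m$. Surjectivity of $\bigsqcup S_i\to S$ gives a light profinite cover $T\to S_m$ over which the point factors through some of the $S_i$. Writing $T$ compatibly as a tower over $(S_n)_{n\ge m}$, the \'etaleness of $X\times_S S_i\to S_i$ produces a lift at a finite stage of $T$. Running the same argument on the \v{C}ech nerve of $T$ shows that these local lifts descend. Effective descent along light profinite covers in $\mathrm{CondAn}^{\mathrm{light}}$ then yields the desired equivalence. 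I expect the bookkeeping of towers here, as in the sheafification step of part (2), to be the delicate point.
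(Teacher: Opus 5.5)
The paper gives no proof of this lemma; it simply quotes it from Clausen. So your argument has to stand on its own, and it has a genuine gap.

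\textbf{What works.} Part (1) and the finite-limit half of (2) are correct. They only use that limits of sheaves are computed objectwise and that filtered colimits commute with finite limits in $\mathrm{An}$, and your pasting argument for the cancellation property is right. The presheaf-level colimit computation in (2) is also correct.

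\textbf{The gap.} Everything else fails for one underlying reason: hypersheafification, truncation and descent in $\mathrm{CondAn}^{\mathrm{light}}$ are not computed objectwise on light profinite sets. Your sketches either assume they are, or need to move $\colim_n$ past an infinite limit, which filtered colimits in $\mathrm{An}$ do not allow.

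(a) In the sheafification step of (2), grant that hypercovers of $S$ are limits of hypercovers of the $S_n$. The sheafified value at $S$ still involves totalizations $\lim_\Delta$ and a colimit over hypercovers. Your phrase ``computed compatibly with the colimit over $n$'' is exactly the missing commutation of $\colim_n$ with $\lim_\Delta$.

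(b) In (4), the stage at which you find a lift on the $k$-th term of the \v{C}ech nerve of $T$ may go to infinity with $k$. The same holds for the stages needed for the higher coherences. So you never produce a point of $\colim_n\lim_\Delta$.

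(c) In (3) you use $(\tau^{/Y}_{\le n}X)(S)\simeq\tau^{/Y(S)}_{\le n}(X(S))$. This is false in general: truncations are sheafified objectwise truncations, and light profinite sets are not projective. Take the surjection $T=(2\mathbb{N}\cup\{\infty_A\})\sqcup((2\mathbb{N}+1)\cup\{\infty_B\})\to S=\mathbb{N}\cup\{\infty\}$, which has no continuous section. Then $\tau^{/S}_{\le -1}T\simeq S$, but $\mathrm{id}_S$ is not in the image of $T(S)\to S(S)$. For \'etale $X$ the identity does hold, but proving that is the whole content of the forward direction. Your Whitehead argument (``degree $k$ is detected by the $(k+1)$-st truncation'') also presupposes it.

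\textbf{The missing idea.} You need a concrete description of \'etale objects over a light profinite base.

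First, taking fibres over points of $\colim_n Y(S_n)$ shows that $X\to Y$ is \'etale iff every base change $X\times_Y T\to T$ with $T$ light profinite is \'etale.

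Second, over such $T$ the \'etale objects are exactly those pulled back from sheaves on the Stone space $T$, via $\mathcal{F}\mapsto\bigl((f\colon U\to T)\mapsto\Gamma(U,f^{*}\mathcal{F})\bigr)$.
\begin{itemize}
\item One direction is continuity of sections of sheaves on Stone spaces along cofiltered limits.
\item The other follows from your lifting condition. Write an arbitrary $U\to T$ as a sequential limit of finite disjoint unions of closed subsets of $T$, and each closed subset as a limit of its clopen neighbourhoods.
\end{itemize}

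This pullback functor is fully faithful, left exact and colimit-preserving. Closure under colimits and commutation with truncations, hence the forward half of (3), then become formal.

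The converse half of (3) is better done by passing to $\lim_n$. This uses convergence of Postnikov towers, together with the fact that on finite sets evaluation is a finite product of $\Gamma=(X\mapsto X(*))$, which commutes with truncations.

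Finally, (3) reduces (4) to relatively $n$-truncated maps. There the relevant mapping anima are $n$-truncated, so the totalization over the \v{C}ech nerve is a finite limit over $\Delta_{\le n+1}$. It then commutes with $\colim_n$, and your descent argument goes through.
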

We will now state the conjecture:

\begin{conjecture}[{\cite[Conjecture 5.17]{Clausen}}]
Let $X$ be a $p$-adic analytic smooth Artin stack.
Then \'etale-locally the associated light condensed anima $L(X)$
is a light profinite anima.
\end{conjecture}

\section{Preparatory Theorems}
\subsection{Characterization of light profinite anima}
We now record a useful characterization of light profinite objects inside $\Pro(\mathcal S_\pi)$ in terms of $\omega_1$-cocompactness.
\begin{lemma}
Under the inclusion
\(\Pro^{\mathrm{light}}(\mathcal S_\pi)\hookrightarrow \Pro(\mathcal S_\pi)\),
an object \(X\in \Pro(\mathcal S_\pi)\) is \(\omega_1\)-cocompact
if and only if \(X\) is light profinite, i.e. a pro-object
presented by a countable cofiltered limit of objects of \(\mathcal S_\pi\).
Equivalently,
\[
\Pro^{\mathrm{light}}(\mathcal S_\pi)=\Pro(\mathcal S_\pi)^{\omega_1\text{-cocpt}}
\]
as full subcategories of \(\Pro(\mathcal S_\pi)\).
\end{lemma}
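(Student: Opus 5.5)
Since $\Pro(\mathcal S_\pi)$ is the opposite of $\mathrm{Ind}(\mathcal S_\pi^{\mathrm{op}})$, $\omega_1$-cocompactness of $X$ in $\Pro(\mathcal S_\pi)$ is the same as $\omega_1$-compactness of $X$ in $\mathrm{Ind}(\mathcal S_\pi^{\mathrm{op}})$. So it suffices to prove the dual statement: in $\mathrm{Ind}(\mathcal C)$ with $\mathcal C$ essentially small, the $\omega_1$-compact objects are exactly the retracts of countable filtered colimits of objects of $\mathcal C$. We then check that the retracts cause no trouble. One caveat: $\mathcal S_\pi$ has countably many equivalence classes of objects and countable mapping spaces (up to homotopy, each $\pi_n$ is countable), which we use so that "countable" behaves well. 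Hom-sets of $\pi$-finite anima have finite homotopy groups, which helps.

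\emph{Light implies $\omega_1$-cocompact.} Let $X=\lim_{n}X_n$ be a countable cofiltered limit with $X_n\in\mathcal S_\pi$. Each $X_n$ is cocompact in $\Pro(\mathcal S_\pi)$, since $\mathrm{Map}(\lim_i Y_i, X_n)=\colim_i \mathrm{Map}(Y_i,X_n)$. Take an $\omega_1$-cofiltered limit $Y=\lim_{j\in J}Y_j$. Then
\[
\mathrm{Map}(Y,X)=\lim_n \colim_j \mathrm{Map}(Y_j,X_n),
\]
and countable limits commute with $\omega_1$-filtered colimits in $\mathcal S$. This gives $\colim_j\lim_n \mathrm{Map}(Y_j,X_n)=\colim_j\mathrm{Map}(Y_j,X)$. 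Hence $X$ is $\omega_1$-cocompact.

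\emph{$\omega_1$-cocompact implies light.} Write $X=\lim_{i\in I}X_i$ as a cofiltered limit in $\mathcal S_\pi$. The standard trick is to re-express $I$ as an $\omega_1$-filtered colimit (in $I^{\mathrm{op}}$) of its countable cofiltered subdiagrams $K\subseteq I$. This uses that every countable subset of a filtered $\infty$-category is contained in a countable filtered subcategory; one can assume $I$ is a poset by the usual cofinality reductions. Then $X\simeq\lim_K X_K$ with $X_K=\lim_{i\in K}X_i$, an $\omega_1$-cofiltered limit of light objects. Applying $\omega_1$-cocompactness to $\mathrm{id}_X$, the identity factors through some $X_K$, so $X$ is a retract of the light object $X_K$.

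\emph{Retracts.} We still need light objects to be closed under retracts in $\Pro(\mathcal S_\pi)$. A retract of $X_K$ is the limit of the sequential diagram $X_K\xrightarrow{e}X_K\xrightarrow{e}\cdots$, where $e$ is the idempotent. In the $\infty$-categorical setting an idempotent needs coherent data, but the splitting exists because $\Pro(\mathcal S_\pi)$ is idempotent complete, and the limit of the tower $(X_K,e)$ recovers it. Writing $X_K=\lim_n X_n$ as a tower, we can interleave the two towers. Each map $e$ is represented at finite stages by maps $X_{m}\to X_{n}$, and diagonalizing gives a countable cofiltered diagram in $\mathcal S_\pi$ whose limit is the retract. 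Thus $X$ is light.

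\emph{Main obstacle.} The delicate step is the last one, together with making the countable-subdiagram decomposition rigorous $\infty$-categorically. My fallback is to cite the general fact that in $\mathrm{Ind}_{\omega}(\mathcal C)$, $\kappa$-compact objects are retracts of $\kappa$-small filtered colimits of compacts (Lurie, HTT 5.3.4/5.4.2). Then only closure of countable towers under retracts remains, and I would prove it via the telescope argument above.
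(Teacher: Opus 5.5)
Your proposal is correct and follows essentially the same route as the paper. Both arguments dualize to $\mathrm{Ind}(\mathcal S_\pi^{\mathrm{op}})$, identify the $\omega_1$-(co)compact objects as retracts of countable (co)filtered (co)limits of objects of $\mathcal S_\pi$, and finish by closure of light objects under retracts. Where the paper cites HTT~5.3.4.17, you reprove the needed facts via the countable-subdiagram trick and the commutation of countable limits with $\omega_1$-filtered colimits in $\mathcal S$.

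The paper only asserts closure under retracts, so your telescope-and-diagonalization sketch goes further than the paper does. To make it rigorous, represent $e$ by a level map of reindexed towers whose squares carry homotopies chosen to represent $e$ itself, not just its finite-stage components, and then take the diagonal of the resulting $\mathbb N\times\mathbb N$ grid.
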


\begin{proof}
Recall the standard equivalence
\[
\Pro(\mathcal S_\pi)\;\simeq\; \Ind(\mathcal S_\pi^{\mathrm{op}})^{\mathrm{op}}.
\]
Under this equivalence, \(X\in \Pro(\mathcal S_\pi)\) is \(\omega_1\)-cocompact
if and only if its opposite \(X^{\mathrm{op}}\in \Ind(\mathcal S_\pi^{\mathrm{op}})\) is
\(\omega_1\)-compact.

\smallskip

\noindent\emph{(\(\Rightarrow\))} Suppose \(X\) is \(\omega_1\)-cocompact. Then
\(X^{\mathrm{op}}\) is \(\omega_1\)-compact in \(\Ind(\mathcal S_\pi^{\mathrm{op}})\).
By Lurie’s characterization of \(\kappa\)-compact objects in \(\Ind\)-categories
\cite[Proposition~5.3.4.17]{HTT}, \(X^{\mathrm{op}}\) is a retract of a \(\omega_1\)-small
colimit of representables. Since \(\omega_1\)-small means \emph{countable},
we can write
\[
X^{\mathrm{op}}\;\text{ is a retract of }\;\colim y(c_i)
\]
for some countable category \(I\) and objects \(c_i\in \mathcal S_\pi^{\mathrm{op}}\); this exhibits \(X\)
as a retract of the corresponding countable limit
\[
\lim_{i\in I^{\mathrm{op}}} c_i
\]
in \(\Pro(\mathcal S_\pi)\). Hence since light profinite objects are closed under retracts and countable limits, \(X\) is also light profinite.

\smallskip

\noindent\emph{(\(\Leftarrow\))} Conversely, suppose \(X\) is light profinite,
so \(X\) is a pro-object of the form \(\varprojlim_{i\in J} c_i\)
with \(J\) countable cofiltered and \(c_i\in\mathcal S_\pi\).
Passing to opposites, \(X^{\mathrm{op}}\) is a filtered colimit
\[
\varinjlim_{i\in J^{\mathrm{op}}} y(c_i)
\]
in \(\Ind(\mathcal S_\pi^{\mathrm{op}})\), where \(J^{\mathrm{op}}\) is countable filtered.
Again by \cite[Proposition~5.3.4.17]{HTT}, any \(\omega_1\)-small colimit of
representables is \(\omega_1\)-compact. Thus \(X^{\mathrm{op}}\) is \(\omega_1\)-compact, hence
\(X\) is \(\omega_1\)-cocompact in \(\Pro(\mathcal S_\pi)\).

\smallskip

Therefore \(X\) is \(\omega_1\)-cocompact if and only if \(X\) is light profinite.
\end{proof}
We now relate the two notions of lightness by showing that any profinite anima which is already a light condensed anima is automatically light profinite.

\begin{proposition}\label{light}
Let $X\in \PykS$ be a profinite anima, i.e.\ $X\in \Pro(\mathcal{S}_\pi)$.
Assume that $X$ lies in the essential image of the inclusion
\[
\CondAn^{\mathrm{light}}\hookrightarrow \PykS .
\]
Then $X$ is a \emph{light profinite anima}.
\end{proposition}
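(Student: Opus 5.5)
We will verify the criterion of the preceding lemma: it suffices to show that $X$ is $\omega_1$-cocompact in $\Pro(\mathcal S_\pi)$. In other words, for every $\omega_1$-filtered system $(Y_\alpha)$ of $\pi$-finite anima, or more generally of objects of $\Pro(\mathcal S_\pi)$ with $\omega_1$-cofiltered limit $Y=\lim_\alpha Y_\alpha$, we need the map
\[
\colim_\alpha \Map(Y_\alpha, X)\to \Map(Y, X)
\]
to be an equivalence. Mapping out of $X$ is the wrong direction for our hypothesis, so we dualize carefully. By the lemma, $\omega_1$-cocompactness of $X$ means that $\Map_{\Pro}(X,-)$ commutes with $\omega_1$-filtered colimits in $\Pro(\mathcal S_\pi)$. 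The plan is instead to write $X=\lim_{j\in J} X_j$ as a cofiltered limit of $\pi$-finite anima, indexed by a directed poset $J$, and to show that $J$ can be replaced by a countable cofinal sub-system. That is, we produce a countable $J_0\subseteq J$ with $X\simeq \lim_{J_0}X_j$ as pro-objects.

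\textbf{Step 1 (reduction to the light site).} Since $X\simeq i_*i^*X$, $X$ is the left Kan extension of its restriction to light profinite sets. We use the fully faithful embedding $\Pro(\mathcal S_\pi)\hookrightarrow\PykS$ from \cite[Example 3.3.10]{BH19}, under which $\Map(X,Z)=\colim_j \Map(X_j,Z)$ for $Z$ $\pi$-finite. Combined with $X$ being a colimit of light profinite sets in $\PykS$, this gives that every map $X\to Z$ with $Z\in\mathcal S_\pi$ is determined by its restrictions along maps $S\to X$ from light profinite sets $S$.

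\textbf{Step 2 (each projection is countably generated).} Each projection $X\to X_j$ is a point of $X_j(X)=\colim_{j'}\Map(X_{j'},X_j)$. We want to show that the entire pro-system is controlled by countably many pieces of data. Write $X$ as a colimit in $\PykS$ of light profinite sets $S\to X$. Each $\pi$-finite $X_j$ has only countably many homotopy groups and components, each finite. The idea is to choose a countable family of light profinite sets $S_n\to X$ jointly detecting $\pi_0$ and the homotopy of $X$. We then argue that the induced map $\colim_n S_n$ (geometric realization of a countable diagram) $\to X$ is already an equivalence after profinite completion. This would exhibit $X$ as the profinite completion of a countable colimit of light profinite sets, and such a completion is $\omega_1$-cocompact because light profinite sets are $\omega_1$-cocompact in $\Pro(\mathrm{fin})$ and countable colimits preserve this.

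\textbf{Main obstacle.} The hard step is producing the countable detecting family, i.e.\ controlling cardinality. Being light condensed does not obviously bound the size of $X$: the left Kan extension $i_*$ can involve uncountably many light profinite sets. What I would try first is to use the profiniteness of $X$ itself. For a profinite anima, $\pi_0$ and each $\pi_n$ at every basepoint are profinite (sets, groups) that are again light condensed, hence sequential limits; a profinite set in the image of $i_*$ is light, since a compact Hausdorff space that is a quotient of a light profinite set is metrizable. So each homotopy pro-group is countably indexed. We would then assemble the Postnikov tower $X\simeq\lim_n\tau_{\le n}X$, where each stage is a countable iterated extension of light profinite homotopy data, and conclude that $X$ is a countable limit of light profinite anima. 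Since light profinite anima are closed under countable limits (as used in the lemma), this shows $X$ is light profinite.
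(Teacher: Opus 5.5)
There is a genuine gap, and it starts with a misreading of the criterion. Your first display is the correct condition. $X$ is $\omega_1$-cocompact in $\Pro(\mathcal S_\pi)$ precisely when $\mathrm{Map}(-,X)$ sends $\omega_1$-cofiltered limits to colimits, i.e.\ $\colim_\alpha \mathrm{Map}(Y_\alpha,X)\simeq \mathrm{Map}(\varprojlim_\alpha Y_\alpha,X)$. It is not a condition on $\mathrm{Map}(X,-)$; that would be $\omega_1$-compactness. Mapping \emph{into} $X$ is exactly what your hypothesis controls, and this is the paper's entire proof. Since $X$ lies in the image of $\CondAn^{\mathrm{light}}$, one has $X(P)\simeq \colim_{K\to P} X(K)$ with $K$ ranging over light objects. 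Each light $K$ is $\omega_1$-cocompact by the preceding lemma, so when $P=\varprojlim_i P_i$ is $\omega_1$-cofiltered, every $K\to P$ factors through some $P_i$. The colimit then regroups as $\colim_i \colim_{K\to P_i} X(K)\simeq \colim_i X(P_i)$. By discarding this direction you gave up the short argument, and neither of your replacement routes closes.

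In Step~2, the assertion that the profinite completion of a countable colimit of light profinite sets is $\omega_1$-cocompact ``because countable colimits preserve this'' is false. In $\Pro(\mathcal S_\pi)$ the $\omega_1$-cocompact objects are closed under countable \emph{limits}, not colimits. For example, the coproduct of countably many points in $\Pro(\mathcal S_\pi)$ (equivalently, the profinite completion of the discrete set $\mathbb N$) is $\beta\mathbb N$. That space is not metrizable, hence not light. The countable detecting family is also never produced, as you acknowledge.

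The Postnikov fallback leaves the essential step unproved as well. Knowing that $\pi_0X$ and each $\pi_n(X,x)$ are light profinite does not by itself make $\tau_{\le n}X$ light. There are uncountably many basepoints, so you would need three further inputs:
\begin{itemize}
\item the homotopy sheaves must be light as families over the base, since metrizability of each fibre does not give metrizability of the total space;
\item the $k$-invariants must be classified by maps into light profinite twisted Eilenberg--MacLane objects;
\item Postnikov truncation in $\Pro(\mathcal S_\pi)$ must agree with truncation in $\PykS$.
\end{itemize}
Your sentence ``each stage is a countable iterated extension of light profinite homotopy data'' asserts exactly what has to be shown.
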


\begin{proof}
By the previous lemma, it suffices to show that $X$ is $\omega_1$-cocompact in $\Pro(\mathcal{S}_\pi)$.
Let $(P_i)_{i\in I}$ be an $\omega_1$-cofiltered diagram in $\Pro(\mathcal{S}_\pi)$ and write $P=\varprojlim_{i\in I} P_i$.

Since $X$ lies in the essential image of $\CondAn^{\mathrm{light}}\hookrightarrow \PykS$, the value $X(P)$ is computed by left Kan extension from the full subcategory of light objects. Concretely,
\[
X(P)\;\simeq\;\colim_{(K\to P)} X(K),
\]
where the colimit ranges over light objects $K$ equipped with a map to $P$.

$K$ is light hence from the previous lemma we see that $K$ is $\omega_1$-cocompact in $\Pro(\mathcal{S}_\pi)$ hence $\omega_1$-compact in $\Pro(\mathcal{S}_\pi)^{\text{op}}$, therefore 
\[
X(P)\;\simeq\;\colim_{(K\to P)} X(K)
\;\simeq\;\colim_{i\in I}\;\colim_{(K\to P_i)} X(K)
\;\simeq\;\colim_{i\in I} X(P_i).
\]
Thus $X$ carries $\omega_1$-cofiltered limits in $\Pro(\mathcal{S}_\pi)$ to colimits, i.e.\ $X$ is $\omega_1$-cocompact. Hence $X$ is a light profinite anima.
\end{proof}
\begin{remark}
    This is a straightforward generalization of the characterization of light condensed sets found in \cite{ConCom} just before Proposition 3.15
\end{remark}


\subsection{Global sections detect equivalences of coherent 1-pyknotic anima}
We now recall the internal notions of full faithfulness and essential surjectivity for morphisms of groupoid objects.
\begin{definition}
    Let $f:X_\bullet\to  Y_\bullet$ be a morphism between groupoid objects in a topos:
    \begin{itemize}
        \item $f$ is called internally fully faithful if the diagram
\[
\begin{tikzcd}
X_1 \ar[r] \ar[d] & Y_1 \ar[d] \\
X_0 \times X_0 \ar[r] & Y_0 \times Y_0
\end{tikzcd}
\] is a pullback diagram
\item $f$ is internally essentially surjective if the map
\[
k: X_0 \times_{Y_0} Y_1 \longrightarrow Y_0
\]
is an effective epimorphism.
    \end{itemize}
\end{definition}

The following theorem shows that internal full faithfulness and essential surjectivity are exactly the conditions needed in $\PykS$ for $|f|$ to be an equivalence.
\begin{theorem}\label{functor_theorem}
Let $f:X_\bullet\to Y_\bullet$ be a morphism of groupoid objects in $\PykS$.
Then the geometric realization $|f|:|X_\bullet|\to |Y_\bullet|$ is an equivalence if and only if $f$ is essentially surjective and fully faithful.
\end{theorem}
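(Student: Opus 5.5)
We work in the $\infty$-topos $\PykS$ and use that groupoid objects there are effective. Write $X=|X_\bullet|$ and $Y=|Y_\bullet|$. The augmentations $X_0\to X$ and $Y_0\to Y$ are effective epimorphisms, and the Čech conditions give
\[
X_1\simeq X_0\times_X X_0,\qquad Y_1\simeq Y_0\times_Y Y_0 .
\]
The plan is to translate both internal conditions into statements about the induced map $|f|:X\to Y$. Full faithfulness will correspond to $|f|$ being a monomorphism, meaning its diagonal is an equivalence. Essential surjectivity will correspond to $|f|$ being an effective epimorphism. Since a morphism in an $\infty$-topos that is both an effective epimorphism and a monomorphism is an equivalence (HTT 6.2.3), both directions follow.

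\textbf{Full faithfulness.} We compare the square in the definition with the square obtained by pulling back along $X_0\times X_0\to X\times X$. By the Čech identifications, $X_1\to X_0\times X_0$ is the base change of the diagonal $\Delta_X:X\to X\times X$. Similarly $Y_1\to Y_0\times Y_0$ is the base change of $\Delta_Y$, so $X_0\times X_0\times_{Y_0\times Y_0}Y_1$ is the base change of $X\times_{Y}X\to X\times X$ along $X_0\times X_0\to X\times X$. Full faithfulness therefore says that the comparison map
\[
X \longrightarrow X\times_Y X
\]
becomes an equivalence after base change along the effective epimorphism $X_0\times X_0\to X\times X$. Effective epimorphisms are stable under products and base change, and equivalences descend along them. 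Hence full faithfulness is equivalent to $\Delta_{|f|}$ being an equivalence, i.e.\ $|f|$ is a monomorphism. Both implications come out of this argument at once.

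\textbf{Essential surjectivity.} We identify $X_0\times_{Y_0}Y_1\simeq X_0\times_Y Y_0$, and $k$ with the projection to $Y_0$. This projection is the base change of $X_0\to Y$ along $Y_0\to Y$. Effective epimorphisms are local on the target, and $Y_0\to Y$ is an effective epimorphism, so $k$ is an effective epimorphism if and only if $X_0\to Y$ is. Since $X_0\to X$ is an effective epimorphism, $X_0\to Y$ is an effective epimorphism if and only if $|f|$ is one; the nontrivial implication uses the cancellation property that if $g\circ h$ is an effective epimorphism then so is $g$. Conversely, an equivalence $|f|$ makes $X_0\to Y$ an effective epimorphism, so $k$ is one.

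Combining the two parts, $|f|$ is an equivalence if and only if it is both a monomorphism and an effective epimorphism, which holds if and only if $f$ is fully faithful and essentially surjective. The main obstacle is the bookkeeping in the full faithfulness step: one must check carefully that the pullback squares compose to exhibit the internal full-faithfulness square as the base change of $X\to X\times_YX$. No special properties of $\PykS$ beyond its being an $\infty$-topos are needed.
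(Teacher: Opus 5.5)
Your argument is correct, but it takes a different route from the paper.

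\textbf{The paper's route.} The paper never works internally. It uses that $\PykS$ is sheaves on extremally disconnected compacta. For each such $T$, evaluation $\mathrm{ev}_T$ commutes with geometric realizations and finite limits, and effective epimorphisms are detected by surjectivity after each $\mathrm{ev}_T$. This transports both internal conditions, and the conclusion, to groupoid objects in anima. There the paper quotes the classical fact that a functor of $\infty$-groupoids is an equivalence iff it is essentially surjective and fully faithful.

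\textbf{Your route.} You stay inside the $\infty$-topos and use effectivity of groupoid objects to write $X_1\simeq X_0\times_X X_0$ and $Y_1\simeq Y_0\times_Y Y_0$. You then match the two conditions one at a time. Full faithfulness corresponds to $|f|$ being a monomorphism, by descending the diagonal $X\to X\times_Y X$ along the effective epimorphism $X_0\times X_0\to X\times X$. Essential surjectivity corresponds to $|f|$ being an effective epimorphism, by locality on the target plus cancellation. You finish with the fact that a monic effective epimorphism is an equivalence.

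\textbf{Comparison.} Your argument works in any $\infty$-topos, with no hypercompleteness and no use of extremally disconnected objects. It also gives the sharper termwise correspondence: fully faithful $\Leftrightarrow$ monomorphism, essentially surjective $\Leftrightarrow$ effective epimorphism. Specialized to $\mathcal{S}$, it is a proof of the very classical fact the paper cites. The paper's proof is shorter on the page, but it relies on a special feature of the pyknotic site: the functors $\mathrm{ev}_T$ are jointly conservative, preserve finite limits, and commute with geometric realizations.
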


\begin{proof}
Since $\Pyk(S)\simeq \Sh_{\mathrm{eff}}(\text{EStn})$, where $\text{EStn}$ are the Stonean topological spaces, equivalences can be checked after evaluation on each $T\in\text{EStn}$.
Thus it suffices to check that $|f|$ is an equivalence after evaluation at each extremely disconnected space $T$.

By \cite[Example 2.2.11]{BH19}, evaluation at $T$ commutes with geometric realization, i.e.
\[
\text{ev}_T(|X_\bullet|)\simeq |\text{ev}_T(X_\bullet)| \qquad\text{and}\qquad
\text{ev}_T(|Y_\bullet|)\simeq |\text{ev}_T(Y_\bullet)|.
\]
Moreover $\text{ev}_T$ commutes with finite limits.

As a consequence a morphism in $\PykS$ is an effective epimorphism if and only if its evaluation at every extremely disconnected space is surjective.
Since the conditions “essentially surjective” and “fully faithful” for a morphism of groupoid objects are expressed using finite limits and effective epimorphisms, it follows that $f$ is essentially surjective (resp.\ fully faithful) if and only if $\text{ev}_T(f)$ is so for every extremely disconnected $T$.

Therefore the claim reduces to the corresponding statement for the evaluated groupoid objects $\text{ev}_T(X_\bullet),\text{ev}_T(Y_\bullet)$, which live in anima. In anima, this is the usual fact that a functor of $\infty$-categories is an equivalence if and only if it is essentially surjective and fully faithful.
\end{proof}

To reduce later equivalence checks to a statement about spaces, we prove fact that $\Gamma$ is conservative on coherent $1$-truncated pyknotic anima.
\begin{theorem}\label{thm:Gamma-conservative-1type}
Let $f:X\to Y$ be a morphism between \emph{coherent} $1$-truncated objects in $\PykS$.
Then $f$ is an equivalence if and only if $\Gamma(f):\Gamma(X)\to\Gamma(Y)$ is an equivalence of spaces.
\end{theorem}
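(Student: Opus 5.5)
The plan is to reduce the statement to the classical fact that a continuous bijection between compact Hausdorff spaces is a homeomorphism, working one homotopical degree at a time.

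\textbf{Properties of $\Gamma$.} The "only if" direction is trivial. For the converse, I will use that $\Gamma$ has both a left and a right adjoint. It therefore preserves all limits and colimits, and in particular it commutes with $n$-truncations and fibre products. I will also use the following identifications from \cite{BH19} and the general theory of coherent topoi:
\begin{itemize}
\item coherent objects in $\PykS$ are closed under finite limits and under truncation;
\item the coherent $0$-truncated objects are exactly the compact Hausdorff spaces.
\end{itemize}

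\textbf{Step 1: $\pi_0$.} Apply $\tau_{\le 0}$ to $f$. This gives a map $\tau_{\le0}X\to\tau_{\le0}Y$ of coherent $0$-truncated objects, i.e.\ a continuous map of compact Hausdorff spaces. Since $\Gamma$ commutes with $\tau_{\le 0}$, the map $\Gamma(\tau_{\le 0}f)=\pi_0\Gamma(f)$ is a bijection. Hence $\tau_{\le0}f$ is a continuous bijection of compact Hausdorff spaces, so it is a homeomorphism, and in particular $f$ is an effective epimorphism.

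\textbf{Step 2: the diagonal.} Consider $\Delta_f\colon X\to X\times_Y X$. Since $X$ and $Y$ are $1$-truncated, $\Delta_f$ is a $0$-truncated morphism, and its source and target are coherent. Because $\Gamma$ preserves fibre products, $\Gamma(\Delta_f)$ is the diagonal of the equivalence $\Gamma(f)$, hence an equivalence. Given Step 1, a $1$-truncated map $f$ is an equivalence once $\Delta_f$ is an equivalence, so it remains to treat $0$-truncated maps between coherent objects.

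\textbf{Step 3: $0$-truncated maps, by descent.} Let $g\colon E\to B$ be such a map, with $\Gamma(g)$ an equivalence. Choose an effective epimorphism $S\to B$ from a profinite set $S$; this is possible because $B$ is quasi-compact. The base change $E_S=E\times_B S\to S$ is a $0$-truncated map to a $0$-truncated object, so $E_S$ is $0$-truncated. It is also coherent, being a fibre product of coherent objects, so $E_S$ is a compact Hausdorff space over $S$. For each point $s\in\Gamma(S)$, the fibre of $\Gamma(E_S)\to\Gamma(S)$ over $s$ is the fibre of $\Gamma(g)$ over the image of $s$ in $\Gamma(B)$, which is a point. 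Thus $E_S\to S$ is a continuous bijection of compact Hausdorff spaces, hence a homeomorphism. By descent along the effective epimorphism $S\to B$, $g$ is an equivalence. Applying this to $g=\Delta_f$ finishes the proof.

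\textbf{Main obstacle.} The real work lies in the coherence bookkeeping:
\begin{itemize}
\item showing that $E_S$ and $\tau_{\le0}X$ are genuinely coherent, which needs closure of coherent objects under finite limits and truncation in the coherent topos $\PykS$;
\item identifying coherent $0$-truncated pyknotic objects with compact Hausdorff spaces, for which I would cite \cite{BH19};
\item producing a profinite cover of a quasi-compact object.
\end{itemize}
Once these are in place, the argument is the compact Hausdorff bijection lemma applied twice.
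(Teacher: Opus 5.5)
Your proof is correct, but it is organized differently from the paper's.

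\textbf{The paper's route.} The paper presents $X$ and $Y$ as realizations of \v{C}ech nerves of coherent $0$-truncated covers $U\to X$ and $V\to Y$. It then passes to the common refinement $W=U\times_Y V$, which is again coherent and $0$-truncated. It shows that $\beta_\bullet\colon W_\bullet\to V_\bullet$ is an equivalence of internal groupoids in two parts:
\begin{itemize}
\item internal full faithfulness, via \cite[Corollary 3.10]{DerivedStoneEmbedding}, which is your continuous-bijection lemma;
\item internal essential surjectivity, via the fact that a surjection of compact Hausdorff spaces is an effective epimorphism.
\end{itemize}
It then concludes with Theorem~\ref{functor_theorem}.

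\textbf{Your route.} You instead split ``equivalence'' into ``effective epimorphism plus monomorphism''. Step~1 gets the effective-epimorphism part from $\tau_{\le 0}$. Steps~2--3 get the monomorphism part from the diagonal $\Delta_f$, by descent along a profinite cover.

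\textbf{Comparison.} Your route avoids choosing atlases, forming the refinement $W$, and invoking the groupoid criterion. It also has the shape of an induction on truncation level that should extend beyond $1$-types.

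The price is one input the paper never needs: that $\tau_{\le 0}X$ is coherent. The paper avoids it because every object it touches is a finite limit of coherent objects. This input does hold in your situation. Write $X=|U_\bullet|$ for the \v{C}ech nerve of a profinite cover. Then $\tau_{\le 0}X\simeq U_0/R$, where $R$ is the image of the compact space $U_1$ in $U_0\times U_0$. So $R$ is a closed equivalence relation and the quotient is compact Hausdorff. You should say this rather than appeal to a general closure-under-truncation statement.

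\textbf{Alternative for Step 1.} You could bypass $\tau_{\le 0}$ entirely by borrowing the paper's argument. The map $\Gamma(U\times_Y V)\to\Gamma(V)$ is surjective, because $\Gamma(U)\to\Gamma(Y)$ is $\pi_0$-surjective. Hence $U\times_Y V\to V$ is an effective epimorphism of compact Hausdorff spaces. This map factors through $f$ after composing with $V\to Y$, which forces $f$ to be an effective epimorphism.
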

\begin{proof}
The forward direction is immediate. We prove the reverse. Assume $\Gamma(f)$ is an equivalence in $\mathcal{S}$.

\smallskip

\noindent\textbf{Step 1: Present $X$ and $Y$ by coherent covers.}
Since $X$ is coherent, there exists an effective epimorphism $p:U\twoheadrightarrow X$ where $U$ is coherent and $0$-truncated (a finite disjoint union of profinite sets).
Let $U_\bullet$ be the Čech nerve of $p$. Since $X$ is $1$-truncated, $U_\bullet$ is a groupoid object on $0$-truncated objects, and we have an equivalence $X \simeq |U_\bullet|$.
Similarly, choose an effective epimorphism $q:V\twoheadrightarrow Y$ with $V$ coherent $0$-truncated, giving $Y \simeq |V_\bullet|$.

\smallskip

\noindent\textbf{Step 2: Construct the refinement}
We cannot assume a direct map between the simplicial objects $U_\bullet$ and $V_\bullet$. Instead, we construct a common refinement. Define
\[
W := U \times_Y V.
\]
Since $q:V\to Y$ is an effective epimorphism, its base change $W \to U$ is an effective epimorphism. Consequently, the composite $W \to U \to X$ is an effective epimorphism.
Let $W_\bullet$ be the Čech nerve of $W \to X$. We have a diagram of simplicial objects
\[
U_\bullet \xleftarrow{\;\;\alpha_\bullet\;\;} W_\bullet \xrightarrow{\;\;\beta_\bullet\;\;} V_\bullet
\]
where $\alpha_\bullet$ and $\beta_\bullet$ are induced by the projections $W\to U$ and $W\to V$.
This induces a diagram on geometric realizations:
\[
X \simeq |U_\bullet| \xleftarrow{\;\sim\;} |W_\bullet| \xrightarrow{\;|\beta_\bullet|\;} |V_\bullet| \simeq Y.
\]
The map $|\alpha_\bullet|$ is an equivalence because $W$ and $U$ both cover $X$ (so $W_\bullet$ is a refinement of the atlas $U_\bullet$).
Under these identifications, the original map $f$ corresponds to $|\beta_\bullet| \circ |\alpha_\bullet|^{-1}$.
Thus, to prove $f$ is an equivalence, it suffices to prove $|\beta_\bullet|$ is an equivalence.

\smallskip

\noindent\textbf{Step 3: Reduce to properties of internal groupoids.}
Applying $\Gamma$ and using the fact that $\Gamma$ commutes with colimit(as a left adjoint to the indiscrete functor), we have equivalences $\Gamma(X)\simeq |\Gamma(W_\bullet)|$ and $\Gamma(Y)\simeq |\Gamma(V_\bullet)|$.
The hypothesis that $\Gamma(f)$ is an equivalence implies that
\[
\Gamma(\beta_\bullet): \Gamma(W_\bullet) \longrightarrow \Gamma(V_\bullet)
\]
induces an equivalence on realizations. Therefore, $\Gamma(\beta_\bullet)$ is an \textbf{equivalence of groupoids}.

\smallskip

\noindent\textbf{Step 4: Apply conservativity levelwise.}
We now show $\beta_\bullet$ is an equivalence of groupoids in hence $f$ is an isomorphism. Note that all objects $W_n, V_n$ are coherent and $0$-truncated.

\textit{(i) Internal Full Faithfulness.}
$\beta_\bullet$ is internally fully faithful if the diagram
\[
\begin{tikzcd}
W_1 \ar[r] \ar[d] & V_1 \ar[d] \\
W_0 \times W_0 \ar[r] & V_0 \times V_0
\end{tikzcd}
\]
is cartesian. Let $P := (W_0 \times W_0) \times_{(V_0 \times V_0)} V_1$. We must show the canonical map $h: W_1 \to P$ is an isomorphism.
$P$ is a finite limit of coherent $0$-truncated objects, hence coherent $0$-truncated.
Applying $\Gamma$, we see $\Gamma(h)$ is an isomorphism because $\Gamma(\beta_\bullet)$ is fully faithful (a bijection on hom-sets).
By \cite[Corollary 3.10.]{DerivedStoneEmbedding}, since $h$ is a map between coherent $0$-truncated objects and $\Gamma(h)$ is an isomorphism, $h$ is an isomorphism in $\PykS$.

\textit{(ii) Internal Essential Surjectivity.}
$\beta_\bullet$ is internally essentially surjective if the map
\[
k: W_0 \times_{V_0} V_1 \longrightarrow V_0
\]
is an effective epimorphism. The source and target are coherent $0$-truncated.
Applying $\Gamma$, $\Gamma(k)$ is surjective because $\Gamma(\beta_\bullet)$ is essentially surjective.
But it is a known fact that a map between compact Hausdorff spaces is an effective epimorphism if and only if it is surjective.

\smallskip

\noindent\textbf{Conclusion.}
Since $\beta_\bullet$ is internally fully faithful and essentially surjective, the induced map on realizations $|\beta_\bullet|: |W_\bullet| \to |V_\bullet|$ is an equivalence in $\mathcal{X}$.
It follows that $f \simeq |\beta_\bullet| \circ |\alpha_\bullet|^{-1}$ is an equivalence.
\end{proof}
Combining the conservativity of $\Gamma$ with the classical Whitehead criterion for $1$-types, we deduce the following corollary.
\begin{corollary}\label{whitehead}
    Let $f:X\to Y$ be a morphism between \emph{coherent} $1$-truncated objects in $\PykS$.
Then $f$ is an equivalence if and only if $\pi_0(f)$ is an isomorphism and the induced map
$ f_*: \pi_1(X, x_0) \to \pi_1(Y, f(x_0)) $
is an isomorphism for every basepoint $x_0 \in X$
\end{corollary}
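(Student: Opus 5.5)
The plan is to deduce Corollary~\ref{whitehead} from Theorem~\ref{thm:Gamma-conservative-1type}, applying the classical Whitehead theorem for $1$-types to $\Gamma(f)$. The forward direction is immediate: an equivalence induces isomorphisms on all homotopy invariants. For the converse, by Theorem~\ref{thm:Gamma-conservative-1type} it suffices to show that $\Gamma(f)\colon \Gamma(X)\to\Gamma(Y)$ is an equivalence of spaces.

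First I would fix the interpretation of $\pi_0(f)$ and $\pi_1(f,x_0)$. The natural reading, and the one that makes the statement a direct consequence, is that these denote the homotopy groups of the underlying spaces. Thus $\pi_0(X):=\pi_0\Gamma(X)$, basepoints $x_0$ are points of $\Gamma(X)$, and $\pi_1(X,x_0):=\pi_1(\Gamma(X),x_0)$. Since $\Gamma$ is a right adjoint (to the constant sheaf functor), it preserves limits, so it preserves $n$-truncatedness. Hence $\Gamma(X)$ and $\Gamma(Y)$ are $1$-truncated spaces.

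Second, I would apply the Whitehead theorem in $\mathcal S$. A map of $1$-truncated spaces that is a bijection on $\pi_0$ and an isomorphism on $\pi_1$ at every basepoint is an equivalence, because all higher homotopy groups vanish on both sides. So $\Gamma(f)$ is an equivalence, and Theorem~\ref{thm:Gamma-conservative-1type} gives that $f$ is an equivalence.

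The only delicate point is the interpretive one. If $\pi_0$ and $\pi_1$ were instead meant as sheaves $\pi_0(X)\in\PykS$ and $\pi_1$ over $X$, one would need the stronger sheaf-level Whitehead statement for hypercomplete topoi. That version holds directly in $\PykS$, since $\PykS$ is hypercomplete and $f$ is a map of $1$-truncated objects, so no coherence is even needed. I would state the global-sections reading explicitly, since that is the one for which coherence, via Theorem~\ref{thm:Gamma-conservative-1type}, is the essential input. I would also remark that the sheaf reading follows as well, from hypercompleteness.
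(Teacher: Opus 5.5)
Your proposal is correct and is essentially the paper's own argument. The paper derives this corollary in one line, by combining the conservativity of $\Gamma$ on coherent $1$-truncated objects (Theorem~\ref{thm:Gamma-conservative-1type}) with the classical Whitehead criterion for $1$-types applied to $\Gamma(f)$, and your global-sections reading of $\pi_0$, $\pi_1$ and basepoints is the intended one.
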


\subsection{\texorpdfstring{$F$}{F}-analytic Lie Groupoid}

We fix the following notion of an $F$-Lie groupoid.
\begin{definition}\label{def:padic-lie-groupoid}
Let $G \rightrightarrows G_0$ be a groupoid object in $\Pyk(\mathcal S)$, with space of objects $G_0$ and space of arrows $G_1$.
We say that $G$ is a \emph{$F$- Lie groupoid} if:
\begin{enumerate}
    \item $G_0$ and $G_1$ are $F$-analytic  manifolds;
    \item the source and target maps $s,t \colon G_1 \to G_0$ are submersions;
    \item the structure maps
    \[
        u \colon G_0 \to G_1,\qquad
        i \colon G_1 \to G_1,\qquad
        m \colon G_1 \times_{G_0} G_1 \to G_1
    \]
    (unit, inversion, and multiplication) are smooth, where the fiber product admits a manifold structure since $s$ and $t$ are submersions.
\end{enumerate}
\end{definition}

We next explain how an $F$-analytic smooth Artin stack can be presented by an analytic Lie groupoid via its \v{C}ech nerve.
\begin{theorem}[Presentation by an analytic Lie groupoid]\label{thm:smooth-artin-stacks-are-1truncated}
Let \(F\) be a local field and let \(X\in \Sh(\Man_F)\) be an \(F\)-analytic smooth Artin stack. 
Thus there exists a surjective representable submersion
\[
p\colon M \twoheadrightarrow X
\]
with \(M\in \Man_F\). Then:
\begin{enumerate}
    \item The object \(X\) is \(1\)-truncated.
    \item The \v{C}ech nerve \(M^{\bullet} = M^{\times_X (\bullet+1)}\) is a simplicial \(F\)-analytic manifold, and \(X\simeq |M^{\bullet}|\).
    \item The simplicial manifold \(M^{\bullet}\) is equivalent to the nerve of a lie groupoid
    \[
    R:=M\times_X M \rightrightarrows M,
    \]
\end{enumerate}
\end{theorem}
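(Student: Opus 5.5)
The plan is to establish the three assertions in the order (2), (3), (1), since the $1$-truncatedness of $X$ follows from the groupoid presentation together with the fact that manifolds are $0$-truncated.

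\textbf{Step 1: the \v{C}ech nerve consists of manifolds.} Since $p$ is a representable submersion, for the map $M\to X$ itself the pullback $M\times_X M\to M$ is a submersion of $F$-analytic manifolds. In particular $R=M\times_X M$ is a manifold and both projections $s,t\colon R\to M$ are submersions; $t$ is the base change of $p$ along itself, and $s$ corresponds to it under the swap. Inductively,
\[
M^{\times_X(n+1)}\simeq M^{\times_X n}\times_M R ,
\]
where the fiber product is taken along a submersion. Hence every $M^{\times_X(n+1)}$ is representable by a manifold, and all face maps are submersions, being composites of base changes of $s$ and $t$. The equivalence $X\simeq|M^\bullet|$ is the standard fact that in an $\infty$-topos an effective epimorphism is the colimit of its \v{C}ech nerve (HTT 6.2.3.5). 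Here $p$ is an effective epimorphism by the definition of surjectivity used in the paper.

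\textbf{Step 2: groupoid structure.} Any \v{C}ech nerve is a groupoid object in $\Sh(\Man_F)$. Every level is a manifold, hence $0$-truncated, because representable sheaves on a $1$-category are discrete. So the Segal maps identify $M^{\times_X(n+1)}$ with the $n$-fold iterated fiber product $R\times_M\cdots\times_M R$. The structure maps therefore come from simplicial maps of the nerve:
\begin{itemize}
\item unit $u$ from the degeneracy $M\to R$ (the diagonal);
\item inversion $i$ from the swap;
\item multiplication $m$ from $d_1\colon R\times_M R\to R$.
\end{itemize}
Each of these is a morphism of representables, hence smooth by Yoneda. Together with Step 1, this verifies Definition \ref{def:padic-lie-groupoid}, so $M^\bullet$ is the nerve of the Lie groupoid $R\rightrightarrows M$.

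\textbf{Step 3: $1$-truncatedness.} Since $M$ is $0$-truncated and $p$ is an effective epimorphism, it suffices to show that $p$ is $0$-truncated, i.e.\ that its diagonal $M\to M\times_X M$ is $(-1)$-truncated. This holds because the diagonal is a map between $0$-truncated objects $M$ and $R$, and any such map is $0$-truncated. So $p$ is $0$-truncated, hence its diagonal is $(-1)$-truncated.

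Alternatively, $X$ is the realization of a groupoid object whose terms are $0$-truncated, and such a realization is $1$-truncated: its loop objects are computed by $R$ locally over $M$. To deduce that $X$ itself is $1$-truncated from $p$ being $0$-truncated, I will use the criterion that $X$ is $n$-truncated iff its diagonal $X\to X\times X$ is $(n-1)$-truncated. After base change along the effective epimorphism $M\times M\to X\times X$, this diagonal becomes $R\to M\times M$, which is $0$-truncated. Since truncatedness descends along effective epimorphisms, the diagonal of $X$ is $0$-truncated, and hence $X$ is $1$-truncated.

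\textbf{Main obstacle.} I expect the main subtlety to be Step 1: checking carefully that the iterated fiber products in $\Sh(\Man_F)$ agree with the fiber products in $\Man_F$, so that they are representable. This requires that fiber products along submersions exist in $\Man_F$ and that the Yoneda embedding preserves them. The rest is formal $\infty$-topos theory.
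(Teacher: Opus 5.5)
Your proof is correct. Your second argument for (1) is exactly the paper's: base-change $\Delta_X$ along the effective epimorphism $M\times M\to X\times X$, observe that the result $R\to M\times M$ is a map of $0$-truncated objects, and use that truncatedness of a morphism is local on the target.

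The only real difference is the order of the steps. The paper proves (1) first and then invokes $1$-truncatedness of $X$ to identify $M^\bullet$ with the nerve of $R\rightrightarrows M$. You instead note that a \v{C}ech nerve is always a groupoid object, and that its levels are $0$-truncated because they are representable. That is slightly more economical, since $0$-truncatedness of the levels is what is actually used. The obstacle you anticipate in Step 1 is also lighter than you expect. Applying the definition of representable submersion with $N=M^{\times_X n}$ shows directly that $M^{\times_X(n+1)}\simeq M\times_X M^{\times_X n}$ is a manifold. You need the comparison with fiber products of manifolds along submersions only to match the Segal identification with the manifold $R\times_M R$ appearing in the definition of an $F$-Lie groupoid.

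You should, however, discard the first version of Step 3. The diagonal $M\to M\times_X M$ being a map between $0$-truncated objects only makes it $0$-truncated, which gives that $p$ is $1$-truncated, not $0$-truncated. As written, the inference ``so $p$ is $0$-truncated, hence its diagonal is $(-1)$-truncated'' is off by one and circular. The conclusion is true: the diagonal has a retraction given by either projection, so it is a monomorphism of $0$-truncated objects. But you would still have to justify why an effective epimorphism $p$ that is $0$-truncated with $M$ discrete forces $X$ to be $1$-truncated, and that is essentially the diagonal argument again.
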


\begin{proof}
Consider the induced map
\[
q := p\times p \colon M\times M \longrightarrow X\times X .
\]
Since effective epimorphisms are stable under products in an \(\infty\)-topos, \(q\) is an effective epimorphism.

Pulling back the diagonal \(\Delta_X\colon X\to X\times X\) along \(q\) yields a cartesian square
\[
\begin{tikzcd}
X\times_{X\times X}(M\times M) \ar[r] \ar[d] & M\times M \ar[d,"q"] \\
X \ar[r,"\Delta_X"] & X\times X .
\end{tikzcd}
\]
There is a canonical identification
\[
X\times_{X\times X}(M\times M)\;\simeq\; M\times_X M,
\]
under which the base-changed morphism is the natural map
\[
M\times_X M \longrightarrow M\times M .
\]
In particular, the base change of \(\Delta_X\) along \(q\) is a morphism between \(0\)-truncated objects (manifolds), hence is \(0\)-truncated. 
Since \(0\)-truncatedness of a morphism is local on the target for effective epimorphisms, it follows that \(\Delta_X\) is \(0\)-truncated. Therefore \(X\) is \(1\)-truncated.

Finally, since \(p\) is an effective epimorphism, \(X\) is the geometric realization of its \v{C}ech nerve \(M^{\bullet}=M^{\times_X(\bullet+1)}\), and each \(M^{n}\) is a manifold by representability. 
As \(X\) is \(1\)-truncated, the simplicial object \(M^{\bullet}\) is equivalent to the nerve of the groupoid object \(R:=M\times_X M\rightrightarrows M\) in \(\Man_F\).
All structure maps are morphisms of \(F\)-analytic manifolds hence smooth, and the source and target maps \(s,t\colon R\to M\) are submersions, since they are pullbacks of \(p\).
\end{proof}
\subsection{Profinite Groupoids and Van Dantzig Theorem}\label{sec:van}
We will begin with some topological preliminaries:
\begin{definition}
    A topological space $X$ is called \emph{locally profinite} if every point has a profinite neighborhood.
\end{definition}

\begin{example}
    A $p$-adic manifold is a locally profinite space.
\end{example}

\begin{theorem}
    In a locally profinite space, every point has a neighborhood basis consisting of compact open sets.
\end{theorem}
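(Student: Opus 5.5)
The plan is to reduce to the standard fact that in a compact Hausdorff, totally disconnected space the clopen sets form a basis. Let $X$ be locally profinite, $x\in X$, and $W$ an arbitrary open neighborhood of $x$. By definition there is a profinite set $K\subseteq X$ which is a neighborhood of $x$, so there is an open set $V$ with $x\in V\subseteq K$. Set $O := V\cap W$; this is an open neighborhood of $x$ in $X$ contained in $K$, and it is also open in the subspace topology of $K$.

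Next we use that $K$ is profinite, i.e.\ a cofiltered limit of finite discrete spaces, hence compact, Hausdorff and totally disconnected. In such a space the clopen subsets form a basis: writing $K=\varprojlim K_i$, the preimages of points under the projections $K\to K_i$ are clopen, and they form a basis of the limit topology. Hence there is a clopen $C\subseteq K$ with $x\in C\subseteq O$.

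It remains to check that $C$ is compact and open in $X$. Compactness is immediate, since $C$ is closed in the compact space $K$. For openness, note that $C$ is open in $K$, so $C=C\cap O$ is open in $O$; since $O$ is open in $X$, $C$ is open in $X$. Thus $C$ is a compact open neighborhood of $x$ contained in $W$. As $W$ was arbitrary, these sets form a neighborhood basis at $x$.

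The one point requiring care is passing from ``open in $K$'' to ``open in $X$''. The profinite neighborhood $K$ need not itself be open, which is why we first shrink to the open set $O\subseteq K$ before choosing $C$. No Hausdorffness of $X$ is needed, since compactness of $C$ is computed inside $K$.
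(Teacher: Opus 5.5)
Your proof is correct and follows the paper's argument essentially step for step: shrink to an open set $O = V\cap W$ lying inside the profinite neighborhood, choose a clopen subset of that neighborhood containing $x$ and contained in $O$, and check that it is compact (closed in a compact space) and open in $X$. The only cosmetic difference is that you verify openness by noting that an open subset of the open subspace $O$ is open in $X$, whereas the paper writes the clopen set explicitly as the intersection of an open set of $X$ with $V$.
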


\begin{proof}
    Let $X$ be a locally profinite space, let $x \in X$, and let $U$ be an arbitrary open neighborhood of $x$. We wish to find a compact open neighborhood $K$ of $x$ such that $K \subseteq U$.

    By definition, $x$ has a neighborhood $N$ that is a profinite space. Because $N$ is a neighborhood of $x$, there must exist an open set $V$ in $X$ such that $x \in V \subseteq N$.

    Let $W = U \cap V$. Because both $U$ and $V$ are open sets in $X$, $W$ is an open neighborhood of $x$ in $X$. Furthermore, since $W \subseteq N$, $W$ is also open in the subspace topology of $N$.

    Recall that a profinite space has a basis of clopen (closed and open) sets. Because $W$ is an open set in $N$ and contains $x$, there exists a subset $K$ of $N$ that is clopen in $N$ such that $x \in K \subseteq W$.

    We now verify that $K$ is both compact and open in the global space $X$:
    \begin{itemize}
        \item \textbf{Compactness:} Because $K$ is a closed subset of the compact Hausdorff space $N$, $K$ is compact.
        
        \item \textbf{Openness in $X$:} Because $K$ is open in the subspace topology of $N$, there exists an open set $O$ in $X$ such that $K = O \cap N$. Since we know that $K \subseteq W \subseteq V$, we can intersect both sides of our equation with $V$:
        \[
            K = K \cap V = (O \cap N) \cap V = O \cap (N \cap V)
        \]
        Since $V \subseteq N$, we know that $N \cap V = V$. Substituting this back yields $K = O \cap V$. Because both $O$ and $V$ are open sets in $X$, their intersection $K$ is open in $X$.
    \end{itemize}

    We have found a compact open set $K$ in $X$ satisfying $x \in K \subseteq U$. Since $U$ was an arbitrary open neighborhood of $x$, this proves that the compact open sets form a neighborhood basis for every point in $X$.
\end{proof}

We now state a groupoid-level generalization of van Dantzig’s theorem: under the openness of the source and target maps, open neighborhoods of the units contain wide Hausdorfff compact open subgroupoids, giving a basis of “compact–open” structure near $G_0$.
This theorem is crucial because it allows us to reduce the conjecture to the compact Hausdorff case.
\begin{theorem}\label{VanDan}
Let $G \rightrightarrows G_0$ be a groupoid object in $\Pyk(\mathcal S)$ such that the source and target maps
\[
s,t: G_1\to G_0
\]
are \emph{open maps}.
Assume $G_1$ is locally profinite and $G_0$ is profinite.
Then for every \emph{open} neighborhood $U$ of $G_0$ in $G_1$, there exists a \emph{Hausdorff compact open} subgroupoid
$H \subseteq G$ such that
\[
G_0 \subseteq H_1 \subseteq U.
\]
(In particular, $H$ is wide in the sense that it contains all units.)
\end{theorem}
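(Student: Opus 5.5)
The plan is to imitate the classical proof of van Dantzig's theorem: first find a compact open neighbourhood $K$ of the units, then find a smaller symmetric neighbourhood $V$ of the units that $K$ absorbs on the right, and finally take the subgroupoid generated by $V$. Throughout, I identify $G_0$ with its image $u(G_0)\subseteq G_1$, which is compact.

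\textbf{Step 1: a Hausdorff compact open neighbourhood $K$.}
By the preceding theorem on locally profinite spaces, every unit $u(x)$ has a compact open neighbourhood inside $U$ that sits in a profinite neighbourhood, and so is Hausdorff. By compactness of $G_0$, finitely many of these, say $K_1,\dots,K_n$, cover $u(G_0)$.

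The sets $W'_i=u^{-1}(K_i)$ are open in $G_0$ and cover it. Since $G_0$ is profinite, I refine them to a clopen partition $G_0=W_1\sqcup\dots\sqcup W_n$ with $u(W_i)\subseteq K_i$. Then I set
\[
K:=\bigsqcup_i \bigl(K_i\cap s^{-1}(W_i)\cap t^{-1}(W_i)\bigr).
\]
The pieces are pairwise disjoint because $s$ separates them. Each piece is a clopen subset of the compact Hausdorff set $K_i$, so it is compact, Hausdorff and open in $G_1$. Hence $K$ is compact, Hausdorff and open, and $u(G_0)\subseteq K\subseteq U$.

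Finally I replace $K$ by $K\cap i(K)$. This is still compact open, since $i$ is a homeomorphism, and it still contains the units.

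\textbf{Step 2: a symmetric $V$ with $K\cdot V\subseteq K$.}
Since $G_0$ is Hausdorff, the fibre product $G_1\times_{G_0}G_1$ is closed in $G_1\times G_1$. Consider
\[
O:=m^{-1}(K)\cup\bigl((K\times G_1)\setminus (G_1\times_{G_0}G_1)\bigr).
\]
This is open in $K\times G_1$. It contains the compact set $K\times u(G_0)$, because $m(k,u(s(k)))=k$ and pairs that are not composable lie in the second term.

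The tube lemma, applied with compact first factor $K$ and compact subset $u(G_0)$ of the second factor, gives an open $V\supseteq u(G_0)$ with $K\times V\subseteq O$. This means every composable product $kv$ with $k\in K$ and $v\in V$ lies in $K$. Replacing $V$ by $V\cap K\cap i(V)$, I may assume $V\subseteq K$ and $V$ is symmetric. Induction then gives $V^n\subseteq K$ for all $n$.

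\textbf{Step 3: the subgroupoid $H$ and its topology.}
Set $H_1:=\bigcup_n V^n$, where $V^n$ denotes the set of composable $n$-fold products. Algebraically $H_1$ is a wide subgroupoid: it contains the units, is closed under inverses by the symmetry of $V$, and is closed under composition by construction. Moreover $G_0\subseteq H_1\subseteq K\subseteq U$.

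\textbf{Expected main obstacle: openness and closedness of $H_1$.} This is where openness of $s$ and $t$ must be used.

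For openness, the key claim is that if $A,B\subseteq G_1$ are open, then the set of composable products $AB$ is open. I would first try the standard argument. Given $ab$ with $a\in A$, $b\in B$, and any $g$ near $ab$, openness of $s$ (local surjectivity) lets me pick $b'\in B$ near $b$ with $s(b')=s(g)$. Then $g=(g\,i(b'))\,b'$, and by continuity $g\,i(b')$ lies near $a$, hence in $A$. Applying this to $V^{n}=V^{n-1}\cdot V$ shows each $V^n$ is open, so $H_1$ is open.

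For closedness, suppose $g\notin H_1$. Then every $h\in H_1$ composable with $g$ satisfies $gh\notin H_1$. By the same translation argument, $g\cdot H_1$ is an open neighbourhood of $g$ disjoint from $H_1$. Hence $H_1$ is closed in $G_1$, and in particular closed in the compact set $K$.

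It follows that $H_1$ is compact, Hausdorff as a subspace of $K$, and open. Together with the algebraic properties from Step 3, this gives the required wide Hausdorff compact open subgroupoid $H$ with $G_0\subseteq H_1\subseteq U$.
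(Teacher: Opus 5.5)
Your Steps 1 and 2 are fine, and so is your proof that $A\cdot B$ is open for open $A,B$. The gap is the closedness step in Step 3, and it breaks compactness of $H_1$.

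The set $g\cdot H_1=\{gh : h\in H_1,\ s(g)=t(h)\}$ lies inside the single fibre $t^{-1}(t(g))$. So it is not a neighbourhood of $g$ unless that fibre is open. Your translation argument only shows that $A\cdot B$ is open when \emph{both} factors are open.

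This cannot be fixed by rewording. The group fact ``open subgroups are closed'' has no groupoid analogue, and nothing in Steps 1--2 constrains $V$ enough to make $\bigcup_n V^n$ closed. Concretely:
\begin{itemize}
\item Take $G_0=X=\{0\}\cup\{1/n : n\geq 1\}$, let $G_1=X\times\mathbb{Z}/2$ be the trivial group bundle (so $s=t=\mathrm{pr}_1$), and take $U=G_1$.
\item Step 1 may return $K=G_1$. Then $K\cdot V\subseteq K$ holds for every $V$.
\item So Step 2 may return $V=(X\times\{0\})\cup\bigl((X\setminus\{0\})\times\{1\}\bigr)$. This set is open, symmetric, and already a subgroupoid.
\item Hence $H_1=V$. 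It is open, but $(0,1)\in\overline{V}\setminus V$, so it is not closed and not compact.
\end{itemize}

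What you are missing is a way to define $H_1$ so that compactness comes for free, instead of generating it from $V$. The paper does this as follows:
\begin{itemize}
\item It first shrinks to a clopen symmetric $F\subseteq K$ with $m(F\times_{G_0}F)\subseteq K$. This plays the role of your Step 2.
\item It sets $B=\{(g,y)\in F\times_{G_0}F : gy\notin F\}$, $M=p_1(B)$, and $H_1=F\setminus(M\cup M^{-1})$.
\item $B$ is clopen in the compact space $F\times_{G_0}F$. Hence $M$ is compact, and therefore closed in the Hausdorff space $F$.
\item $M$ is open because $p_1$ is a base change of the open map $s|_F$.
\item So $H_1$ is clopen in $F$, hence compact open. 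Closure under composition and inverses is then checked directly via associativity.
\end{itemize}
In the paper, openness of $s$ is used to make the \emph{bad set} $M$ open, not to make cosets open.

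Separately, and less seriously: $K\cap i(K)$ need not be compact when $G_1$ is non-Hausdorff, because two compact open sets can meet in a non-compact set. You do not actually need this. The set $V\cap i(V)\cap K\cap i(K)$ is already open, symmetric and contained in $K$.
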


\begin{proof}
    Let $U\subseteq G_1$ be an arbitrary open neighborhood of $G_0$. We first claim that there exists a \emph{compact open Hausdorff} set $K$ such that
    \[
        G_0 \subseteq K \subseteq U.
    \]
    
    To see this, note that by \cite[Proposition 2.18]{loc}, there exists an open Hausdorff neighborhood of $G_0$ in $G_1$; by replacing $U$ with its intersection with this neighborhood, we may assume without loss of generality that $U$ itself is a Hausdorff space.
    
    Since $G_1$ is locally profinite, from the last lemma it has a neighborhood basis consisting of compact open sets. Thus, for each $x \in G_0$, we can choose a compact open neighborhood $U_x$ such that $x \in U_x \subseteq U$. 
    
    The collection $\{U_x\}_{x \in G_0}$ forms an open cover of $G_0$. Because $G_0$ is a profinite space, it is compact, which allows us to extract a finite subcover $\{U_{x_1}, \dots, U_{x_n}\}$. We define $K$ to be their union:
    \[
        K = \bigcup_{i=1}^n U_{x_i}.
    \]
    
    As a finite union of compact open sets, $K$ is both compact and open. Furthermore, because $K \subseteq U$ and $U$ is a Hausdorff space, $K$ inherits the Hausdorff property. Thus, $K$ is a compact, open, Hausdorff set satisfying $G_0 \subseteq K \subseteq U$.

    We will now refine $K$ to find a compact open set $F \subseteq K$ (which will hence also be Hausdorff) such that $m(F \times_{G_0} F) \subseteq K$, where $m : G_1 \times_{G_0} G_1 \to G_1$ is the continuous multiplication map.
    
    Because $G_0$ is a profinite space, it is Hausdorff, which implies its diagonal $\Delta_{G_0} \subseteq G_0 \times G_0$ is closed. The fiber product $K \times_{G_0} K$ is exactly the preimage of $\Delta_{G_0}$ under the continuous map $t \times s: K \times K \to G_0 \times G_0$. Therefore, $K \times_{G_0} K$ is a closed subspace of $K \times K$. Since $K$ is compact Hausdorff, the product $K \times K$ is compact Hausdorff, making $K \times_{G_0} K$ a compact Hausdorff space.

    Because $K$ is an open set in $G_1$, its preimage $m^{-1}(K)$ is open in $G_1 \times_{G_0} G_1$. Consequently, the intersection 
    \[
        W = m^{-1}(K) \cap (K \times_{G_0} K)
    \]
    is an open subset of the space $G_1\times_{G_0} G_1$. Notice that for any unit $x \in G_0$, its product is simply $m(x,x) = x \in K$. This guarantees that the space of composable units, $G_0 \times_{G_0} G_0$, is entirely contained within $W$.

    Note that $K$ is itself a profinite space. As an open subspace of a locally profinite space, it inherits a basis of compact open sets(which exist from the last lemma). Because $K$ is Hausdorff, these compact sets are also closed. Thus, $K$ is a compact Hausdorff space with a basis of clopen sets, meaning it is totally disconnected and therefore profinite. In a profinite space, any closed set is exactly equal to the intersection of all clopen neighborhoods containing it.
    
    Let $\mathcal{N}$ be the downward-directed family of all clopen neighborhoods of $G_0$ in $K$. We thus have $\bigcap_{V \in \mathcal{N}} V = G_0$.
    
    For each $V \in \mathcal{N}$, \(V \times_{G_0} V\) is closed in \(K\times_{G_0} K\).

    Remark that
\[
\bigcap_{V \in \mathcal{N}} \bigl(V \times_{G_0} V\bigr)
= G_0 \times_{G_0} G_0 \subseteq W,
\]
define
\[
Z_V := \bigl(V \times_{G_0} V\bigr)  \setminus W.
\]
Then \(Z_V\) is closed in \(K\times_{G_0}K\), and
\[
\bigcap_{V \in \mathcal{N}} Z_V
=
\left(\bigcap_{V \in \mathcal{N}} (V \times_{G_0} V)\right)\cap ((K \times_{G_0} K)\setminus W)
\subseteq W \cap ((K \times_{G_0} K)\setminus W)=\varnothing.
\]
By compactness of \(K \times_{G_0} K\), there exist \(F \in \mathcal{N}\) such that
\[
Z_F =\emptyset
\]    
Hence $F \times_{G_0} F \subseteq W$. Consequently, $m(F \times_{G_0} F) \subseteq K$.

    By replacing $F$ with $F \cap F^{-1}$, we may assume without loss of generality that $F$ is a symmetric set ($F = F^{-1}$). Notice that $F$ is by construction \emph{clopen} in $K$ and hence also compact.

    Define the composable-pairs space:
    \[
        F_2 := F\times_{G_0}F := \{(g,y)\in F\times F \mid t(g)=s(y)\},
    \]
    and the set of pairs whose product escapes $F$:
    \[
        B := \{(g,y)\in F_2 \mid gy\notin F\}.
    \]
    Because $m(F_2) \subseteq K$, we can equivalently write this as the preimage $B = m|_{F_2}^{-1}(K \setminus F)$. Since $F$ is clopen in $K$, the complement $K \setminus F$ is clopen in $K$, making $B$ a clopen subset of $F_2$. As $F$ and $G_0$ are compact Hausdorff spaces, $F_2$ is compact Hausdorff, meaning $B$ is compact as a closed subspace of a compact space.

    Let $p_1:F_2\to F$ be the first projection, and set
    \[
        M := p_1(B),\qquad V := M\cup M^{-1},\qquad H_1 := F\setminus V.
    \]
    We claim that $M$ is clopen in $F$. First, $M$ is compact as the continuous image of the compact set $B$, and hence closed in the Hausdorff space $F$. 
    To see that $M$ is open, it suffices to show that $p_1$ is an open map. A projection from a fiber product $X\times_S Y \to X$ is an open map if the underlying morphism $Y \to S$ is open. Here, $p_1$ is exactly the base change (pullback) of the source map $s|_F : F \to G_0$ along the target map $t|_F : F \to G_0$. Because $F$ is an open set in $G_1$ and $s : G_1 \to G_0$ is an open map, the restriction $s|_F$ is an open map. The pullback of an open map is always open; thus, $p_1$ is an open map, making $M$ an open set.
    
    Because $M$ is clopen in $F$ and inversion is a homeomorphism, $M^{-1}$ is also clopen, meaning $V$ is clopen in $F$. Consequently, $H_1 = F \setminus V$ is clopen in $F$ (and hence open in $G_1$). Since $F$ is compact Hausdorff, $H_1$ is a compact, open, Hausdorff subset of $G_1$.

    Finally, we verify that $H_1$ forms a subgroupoid containing $G_0$.
    \begin{itemize}
        \item \textbf{Contains Units:} Because $G_0$ consists of the units, for any $x \in G_0$ and $y \in F$ composable via $t(x)=s(y)$, we have $xy = y \in F$, meaning $(x,y) \notin B$. Thus $x \notin M$. Because units are self-inverse, $x^{-1} \notin M$ either. Hence $G_0 \cap (M \cup M^{-1}) = \emptyset$, guaranteeing $G_0 \subseteq H_1$.
        
        \item \textbf{Closed under Inversion:} If $g \in H_1$, then $g \in F$ and $g \notin M \cup M^{-1}$. Because we ensured $F$ is symmetric, $g^{-1} \in F$. Furthermore, $g^{-1} \notin M^{-1}$ and $(g^{-1})^{-1} = g \notin M$. Thus $g^{-1} \in H_1$, proving closure under inversion.
        
        \item \textbf{Closed under Composition:} If $g,h \in H_1$ are composable (meaning $t(g)=s(h)$), then $g \notin M$ and $h \in F$ guarantees their product $gh \in F$. Assume for the sake of contradiction that $gh \in M$. Then there exists $y \in F$ with $t(gh) = s(y)$ such that $(gh)y \notin F$. 
        Because $t(gh)=t(h)$, we have $t(h)=s(y)$, meaning $(h,y) \in F_2$. Because $h \in H_1$, we know $h \notin M$, which forces $hy \in F$.
        
        Now consider the pair $(g, hy)$. Because $s(hy) = s(h) = t(g)$, they are composable, meaning $(g, hy) \in F_2$. Because $g \notin M$, their product $g(hy)$ must remain in $F$. However, by associativity, $g(hy) = (gh)y \notin F$, which is a contradiction.
        Thus $gh \notin M$. 
        
        By applying the exact same logic to their inverses (since $g^{-1}, h^{-1} \in H_1$ are composable via $t(h^{-1})=s(g^{-1})$, we similarly get $h^{-1}g^{-1} \notin M$), we conclude $(gh)^{-1} \notin M$, meaning $gh \notin M^{-1}$. Therefore, $gh \notin M \cup M^{-1}$, so $gh \in H_1$.
    \end{itemize}
    
    Since $H_1 \subseteq F \subseteq U$ and $H_1$ defines a compact, open, Hausdorff subgroupoid containing $G_0$, this completes the proof.
\end{proof}

We will need the following key local property: passing to a compact clopen wide subgroupoid yields an \'etale map on realizations.

\begin{lemma}[\'Etaleness of clopen subgroupoid quotients]
\label{lem:etale-clopen-subgroupoid}
Let $G \rightrightarrows G_0$ be a p-adic lie groupoid.
Then for every open wide subgroupoid $H \subseteq G$, the induced map on geometric realizations
\[
|H| \longrightarrow |G|
\]
is \'etale.
\end{lemma}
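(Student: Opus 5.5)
We prove that $|H|\to|G|$ is étale by computing its fibres over the atlas $G_0\to|G|$ and checking étaleness after base change along that atlas, using Lemma~\ref{etaleprop}(4). Since $H$ is wide, $H_0=G_0$, and $G_0\to|G|$ is an effective epimorphism (surjective) in $\CondAn^{\mathrm{light}}$. By Lemma~\ref{etaleprop}(4) it is therefore enough to show that the base change
\[
|H|\times_{|G|}G_0\longrightarrow G_0
\]
is étale. The realizations here are taken in $\CondAn^{\mathrm{light}}$, which is legitimate because $L$ preserves colimits and, by Clausen's Proposition~5.3(2), preserves the relevant pullbacks along the submersions $s,t$. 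The standard groupoid computation in an $\infty$-topos identifies this pullback with the realization of the action groupoid of $H$ acting on $G_1$ by precomposition, fibred over $G_0$ via $t$. Since pullback commutes with geometric realization in a topos, we obtain
\[
|H|\times_{|G|}G_0\;\simeq\;\bigl|\,H_1\times_{s,G_0,t}G_1\rightrightarrows G_1\,\bigr|\;\simeq\; G_1/H,
\]
with the map to $G_0$ induced by $t$ (more precisely, by whichever of $s,t$ is not used for the action).

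The next step is to show that the action groupoid is a free, étale-locally trivial equivalence relation, so that its realization is the $0$-truncated quotient sheaf $G_1/H$. The action is free because groupoid composition is cancellative, so the realization is $0$-truncated. Because $H_1$ is open in $G_1$, the $H$-orbits $gH_{t(g)}$ are open in the fibres. Concretely, the map $G_1\times_{G_0}H_1\to G_1\times_{G_0}G_1$, given by $(g,h)\mapsto(g,gh)$ (suitably oriented), is an open embedding, since it is the base change of the open embedding $H_1\subseteq G_1$ along the homeomorphism $(g,k)\mapsto(g,g^{-1}k)$. Hence the equivalence relation $R\subseteq G_1\times_{G_0}G_1$ is open. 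The quotient map $G_1\to G_1/H$ is then open, and the topological quotient $Q=G_1/H$ is a space over $G_0$ that is \emph{locally homeomorphic} to $G_0$. To see this, take $g\in G_1$ and use the submersion $t$ to choose a local analytic section $\sigma$ of $t$ through $g$, defined on an open set $V\ni t(g)$. The composite $V\to G_1\to Q$ is a continuous injection onto an open set, provided $V$ is shrunk enough that two points of $\sigma(V)$ lie in the same orbit only if they are equal; this follows from openness of $R$ together with Hausdorffness near the diagonal (cf.\ \cite[Proposition 2.18]{loc}). It is therefore an open embedding inverse to the projection.

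This gives that $Q\to G_0$ is a local homeomorphism, and the remaining steps are to identify the sheaf quotient with $Q$ and then conclude. First, we must check that the light condensed sheaf quotient $G_1/R$ agrees with the topological space $Q$. Because $R\rightrightarrows G_1$ admits local sections (the $\sigma(V)$ above), every map from a light profinite $S$ to $Q$ lifts locally on $S$ to $G_1$. So the condensed quotient coincides with $\underline{Q}$; this is exactly where the openness of $H$ and the submersion property enter. Second, a local homeomorphism $Q\to G_0$ is étale in the sense of Theorem~\ref{etale}, by two observations. Open embeddings satisfy criterion (1), because a map $S\to U\subseteq Y$ with $S=\varprojlim S_n$ compact factors through a clopen piece at some finite stage (compactness). Étaleness is local on the source: $Q$ is the colimit of open subsets mapping homeomorphically onto opens of $G_0$, and by Lemma~\ref{etaleprop}(2) étale maps over $G_0$ are closed under colimits.

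The main obstacle is the identification of the base change $|H|\times_{|G|}G_0$ with the honest topological quotient $G_1/H$ as light condensed sets, rather than merely as a sheaf on $\Man_F$. In particular, it requires care with non-Hausdorffness of $G_1$ and with the fact that $L$ only preserves pullbacks along submersion-like maps. I would try first to do the whole computation in $\Sh(\Man_F)$: there $G_1\to G_1/H$ is a local-section cover, so $G_1/H$ is an étale manifold over $G_0$. Only afterwards would I apply $L$, using Clausen's Proposition~5.3(2) for the pullback along the submersion $G_0\to|G|$.
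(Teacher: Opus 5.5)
Your argument is correct and has the same skeleton as the paper's proof. Both reduce via Lemma~\ref{etaleprop}(4) to the base change along $G_0\to|G|$, identify it with the orbit space $G_1/H$, and show that $G_1/H\to G_0$ is a local homeomorphism. Where you differ is in how that last step is established.

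\textbf{The paper's route.} It applies Godement's criterion to make $G_1/H$ an $F$-analytic manifold with $G_1\to G_1/H$ a submersion. It then gets discrete fibres from openness of the orbits and cites Clausen's Lemma~4.20.2.

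\textbf{Your route.} You build the local inverses directly from local sections $\sigma$ of $t$. You also verify two points the paper states without detail. First, the quotient computed in $\mathrm{CondAn}^{\mathrm{light}}$ really is $\underline{G_1/H}$: the local sections make $\underline{G_1}\to\underline{G_1/H}$ an effective epimorphism, and your open-embedding remark identifies its kernel pair with the arrow object of the action groupoid. Second, local homeomorphisms are \'etale.

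\textbf{Comparison.} Your version is more elementary. It needs neither a manifold structure on $G_1/H$ nor Hausdorffness of $G_1/H$, which can fail when $H_1$ is open but not closed. The paper's route is shorter, given the manifold-theoretic input.

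\textbf{One justification to correct.} Injectivity of $V\to G_1/H$ is automatic: orbits lie in $t$-fibres and $\sigma$ is a section of $t$. So no shrinking of $V$ and no appeal to Hausdorffness near the diagonal is needed. What openness of $R$ is really used for is openness of the image. Its saturation $\{k\in t^{-1}(V):\sigma(t(k))^{-1}k\in H_1\}$ is the preimage of the open set $H_1$ under a continuous map on the open set $t^{-1}(V)$. With this, your direct computation in $\mathrm{CondAn}^{\mathrm{light}}$ is complete, and the fallback plan of working first in sheaves on manifolds is not needed.
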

\begin{proof}
Let $F:=|G|$ and $F':=|H|$. Let $G_0$ and consider the object space $G_0$ and write $p:G_0\twoheadrightarrow F$ for the canonical atlas.
Since $p$ is an effective epimorphism, by Lemma~\ref{etaleprop}(4) it suffices to show that the base change
\[
P \;:=\; F'\times_F G_0 \longrightarrow G_0
\]
is \'etale.

\smallskip
\noindent\emph{Description of $P$.}
Base changing along $G_0\to F=|G|$ identifies $P$ with:
$$ P \cong G_1 \big/ H_1 $$ where $H_1$ acts freely on the space of arrows $G_1$ by left composition: $h \cdot g = h \circ g$ (which is well-defined since $s(h) = t(g) \in H_0$). The map $P \to G_0$ is induced by the source map $s: G_1 \to G_0$.
Define the division map 
\[
\delta \colon G_1 \times_t G_1 \longrightarrow G_1, \qquad (g_1, g_2) \longmapsto g_2^{-1}g_1.
\]
Two elements $g_1, g_2 \in G_1$ belong to the same right $H$-coset if and only if $t(g_1) = t(g_2)$ and $g_2^{-1}g_1 \in H$. Therefore, the equivalence relation is exactly the preimage $R = \delta^{-1}(H)$ since $H$ is open $\delta^{-1}(H)$ is open too and hence $R$ is an embedded manifold moreover the projection is a submersion, hence by the Godement criterion \cite[Part II, Ch. III, \S 12.2, p. 92]{serre}, the quotient $G_1/H$ admits a unique $F$-analytic manifold structure such that $q$ is a submersion.
 Finally, since the map $s:G_1\to G_0$ factors through the structure map $P \to G_0$ and $G_1\to P$ is a surjective submersion we conclude that $P\to G_0$ is a submersion too.

\smallskip
\noindent\emph{Discrete fibers.}
The equivalence classes on $G_1$ are the orbits of the $H_1$ action. Fix an $s$-fiber $s^{-1}(x)$. For any $g \in s^{-1}(x)$, left-composition by $g$ yields a diffeomorphism:$$ R_g: s^{-1}(t(g)) \to s^{-1}(x), \quad h \mapsto h \circ g $$Because $H_1$ is a open subspace of $G_1$, its intersection with the fiber $s^{-1}(t(g))$ is open. Its image under $R_g$ is exactly the orbit $H_1 \circ g$. Thus, the $H_1$-orbits are open, hence $P\to G_0$ has discrete fibers.

\smallskip
\noindent\emph{Conclusion.}
A submersion with discrete fibers is a local homeomorphism, hence \'etale in $\mathrm{CondAn}^{\mathrm{light}}$
(cf.\ \cite[Lemma~4.20.2]{Clausen}). Therefore $P\to G_0$ is \'etale, and consequently the map
$|H|=F'\to F=|G|$ is \'etale.
\end{proof}
The rest of this section is devoted to understanding the compact Hausdorff subgroupoids obtained by applying Theorem~\ref{VanDan} to a $p$-adic Lie group. Hence, the object and morphism spaces of the resulting subgroupoid are light profinite.

To simplify the combinatorics of $\mathcal G$ without changing its realization, we pass to a skeletal replacement; this in turn makes it possible to define the normal core of a subgroupoid.
\begin{lemma}[Skeletal replacement]
\label{lem:skeletal-replacement}
Let $\mathcal{G}$ be a groupoid object whose object and morphism spaces are light profinite, and whose source and target maps are open.
Then there exists a groupoid $\mathcal{X}$ and a functor $p:\mathcal{X}\to \mathcal{G}$ such that:
\begin{enumerate}[label=(\roman*),leftmargin=2.2em]
    \item $\Obj(\mathcal{X})$ and $\Mor(\mathcal{X})$ are light profinite;
    \item the induced map on geometric realizations
    \[
    |p|:\,|\mathcal{X}|\longrightarrow |\mathcal{G}|
    \]
    is an equivalence;
    \item $\mathcal{X}$ is skeletal in the sense that for any two distinct objects
    $x\neq y$ in $\Obj(\mathcal{X})$ one has $\Hom_{\mathcal{X}}(x,y)=\varnothing$.
\end{enumerate}
\end{lemma}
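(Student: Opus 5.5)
The plan is to take $\mathcal{X}$ to be the restriction of $\mathcal{G}$ to a closed transversal of its orbit equivalence relation, and to deduce (ii) from Theorem~\ref{functor_theorem}. Write $G_0,G_1$ for the object and morphism spaces of $\mathcal{G}$, and let $R:=(s,t)(G_1)\subseteq G_0\times G_0$ be the orbit relation.

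\textbf{The orbit space is light profinite.} First, $R$ is the continuous image of the compact space $G_1$, so it is compact, hence closed in the Hausdorff space $G_0\times G_0$. Next, let $\pi:G_0\to Q:=G_0/R$ be the quotient map. It is open: the saturation of an open $U\subseteq G_0$ is $t(s^{-1}(U))$, which is open because $t$ is open. Since $\pi$ is open and $R$ is closed, $Q$ is Hausdorff; it is compact as the image of $G_0$. Applying $\pi$ to a countable clopen basis of $G_0$ gives a countable basis of $Q$ consisting of compact open sets, and these are clopen since $Q$ is Hausdorff. Hence $Q$ is a second countable, compact, totally disconnected Hausdorff space, i.e.\ light profinite.

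\textbf{A continuous section.} Next I produce a continuous section $\sigma:Q\to G_0$ of $\pi$. I expect this to be the main obstacle. My first attempt is a direct inductive construction. Write $Q=\varprojlim Q_n$ with $Q_n$ finite clopen partitions, each refining the previous one. Simultaneously choose finite clopen partitions of $G_0$ whose cells have mesh tending to zero, together with nonempty clopen sets $C_{n,q}\subseteq \pi^{-1}(q)$ for $q\in Q_n$, decreasing in $n$. Openness of $\pi$ is what guarantees that the images of the chosen cells still cover each cell of $Q_{n+1}$, so the induction can continue. The section is then $\sigma(q)=\bigcap_n C_{n,q_n}$. If this bookkeeping becomes messy, I would instead invoke Michael's zero-dimensional selection theorem for the lower semicontinuous, closed-valued map $q\mapsto\pi^{-1}(q)$ on the zero-dimensional metrizable space $Q$. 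Set $S:=\sigma(Q)\subseteq G_0$. It is compact, hence closed, hence light profinite, and $\pi|_S:S\to Q$ is a homeomorphism.

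\textbf{The skeleton and the functor.} Define $\mathcal{X}$ as the full subgroupoid on $S$: $\Obj(\mathcal{X})=S$ and $\Mor(\mathcal{X})=s^{-1}(S)\cap t^{-1}(S)$, which is closed in $G_1$ and so light profinite. This gives (i). Let $p$ be the inclusion. For (iii): if $x\neq y$ in $S$ and there is an arrow $x\to y$, then $\pi(x)=\pi(y)$, contradicting the injectivity of $\pi|_S$.

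\textbf{Equivalence on realizations.} For (ii) I would check the two conditions of Theorem~\ref{functor_theorem}.
\begin{itemize}
\item \emph{Internal full faithfulness} holds by construction: $\Mor(\mathcal{X})=(S\times S)\times_{G_0\times G_0}G_1$ as topological spaces. These are compact Hausdorff spaces, and the inclusion of compact Hausdorff spaces into $\PykS$ preserves finite limits, so this is a pullback in $\PykS$.
\item \emph{Internal essential surjectivity}: the map $S\times_{G_0}G_1\xrightarrow{t}G_0$ is surjective. Indeed, for $x\in G_0$ the point $\sigma(\pi(x))$ lies in the orbit of $x$, so some arrow joins $\sigma(\pi(x))$ to $x$. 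A surjection of compact Hausdorff spaces is an effective epimorphism in $\PykS$, as already used in the proof of Theorem~\ref{thm:Gamma-conservative-1type}.
\end{itemize}
Theorem~\ref{functor_theorem} then shows that $|p|$ is an equivalence.
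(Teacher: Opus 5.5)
Your proposal is correct and follows the paper's route. Both arguments take $\mathcal{X}$ to be the full subgroupoid on the image of a continuous section of the open quotient map $G_0\to\pi_0(\mathcal{G})$, and both deduce (ii) from Theorem~\ref{functor_theorem}. There are two small differences: you obtain the section from Michael's zero-dimensional selection theorem (or a direct induction), whereas the paper cites Scholze's erratum for the same fact about open surjections of light profinite sets; and you spell out the closedness of the orbit relation, second countability of the quotient, and the full-faithfulness and essential-surjectivity checks, which the paper leaves implicit.
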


\begin{proof}
Consider the quotient map $q:\Obj(\mathcal{G})\to \pi_0(\mathcal{G})$.
This map is open since $s$ and $t$ are open. Furthermore, since the orbit equivalence relation is closed, $\pi_0(\mathcal{G})$ is a compact Hausdorff space. Since $\Obj(\mathcal{G})$ is a totally disconnected compact space, the open continuous image of a compact clopen basis set is compact and open, hence clopen in the Hausdorff quotient $\pi_0(\mathcal{G})$. Thus $\pi_0(\mathcal{G})$ admits a basis of clopen sets, meaning it is a totally disconnected compact space.

Moreover, $q$ is an open surjection of light profinite sets.
Although open surjections of arbitrary profinite spaces do not necessarily admit continuous sections \cite{erratum}, $q$ is an open surjection of light profinite sets (which correspond to countable inverse limits), and therefore by \cite{erratum} a continuous section of $q$, $\sigma:\pi_0(\mathcal{G})\to \Obj(\mathcal{G})$ exists.

Define $\mathcal{X}$ as the following subgroupoid of $\mathcal{G}$:
\[
\Obj(\mathcal{X})=\sigma\bigl(\pi_0(\mathcal{G})\bigr),\qquad
\Mor(\mathcal{X})=s^{-1}\!\bigl(\sigma(\pi_0(\mathcal{G}))\bigr)\cap t^{-1}\!\bigl(\sigma(\pi_0(\mathcal{G}))\bigr).
\]
Let $p:\mathcal{X}\hookrightarrow \mathcal{G}$ be the evident inclusion functor.

Then (i) holds since $\Obj(\mathcal{X})$ and $\Mor(\mathcal{X})$ are closed subspaces of light profinite spaces.
Condition (iii) is clear by construction: $\Obj(\mathcal{X})$ meets each $\pi_0$-class in exactly one point, so there are
no morphisms between distinct objects of $\mathcal{X}$.
Finally, (ii) follows from Theorem~\ref{functor_theorem}.
\end{proof}
\begin{remark}
    The previous lemma says that a compact groupoid satisfying the condition of the lemma is equivalent to a group object in the slice topos
\end{remark}
The next theorem formalizes the key consequence of skeletality: every clopen neighborhood of $\mathcal X_0$ contains a clopen wide \emph{normal} subgroupoid.
\begin{theorem}
\label{thm:normal-basis-skeleton}
Let $\mathcal{G}$ be a groupoid object whose object and morphism spaces are light profinite, and whose source and target maps are open and let $\mathcal{X}$ be the skeleton of $\mathcal{G}$ from Lemma~\ref{lem:skeletal-replacement}. Then $\mathcal{X}$ admits a neighborhood basis
of $\mathcal{X}_0$ in $\mathcal{X}_1$ consisting of clopen wide normal subgroupoids. Equivalently, for every clopen neighborhood
$U\subseteq \mathcal{X}_1$ of $\mathcal{X}_0$, there exists a clopen wide normal subgroupoid
$\mathcal{N}\subseteq \mathcal{X}$ such that
\[
\mathcal{X}_0\subseteq \mathcal{N}_1\subseteq U.
\]
\end{theorem}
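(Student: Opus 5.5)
The strategy is to first use Theorem~\ref{VanDan} to find a clopen wide subgroupoid inside $U$, and then shrink it to its normal core. By skeletality (Lemma~\ref{lem:skeletal-replacement}(iii)), $s=t$ on $\mathcal{X}_1$, so $\mathcal{X}$ is a family of groups $\mathcal{X}_1\to\mathcal{X}_0$ over the profinite base $\mathcal{X}_0$. Normality of a wide subgroupoid $\mathcal{N}$ then just means fiberwise normality: for every $x\in\mathcal{X}_0$, the subgroup $\mathcal{N}_x$ of $\mathcal{X}_x$ is normal.

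\textbf{Step 1.} Given a clopen neighborhood $U$ of $\mathcal{X}_0$, we need a clopen wide subgroupoid $H\subseteq U$. Here $\mathcal{X}_0$ and $\mathcal{X}_1$ are profinite, hence compact Hausdorff. The construction in the proof of Theorem~\ref{VanDan} uses openness of $s$ only to show that $p_1(B)$ is open. For $\mathcal{X}$ this needs care, because the source map of the closed subgroupoid $\mathcal{X}$ need not be open. I would therefore avoid it with a purely compactness-based variant. Let $H_1$ be the set of $g\in U$ such that $gy\in U$ and $yg\in U$ for all $y$ in the fiber $\mathcal{X}_{s(g)}$. Alternatively, first find a clopen $F\subseteq U$ with $F\cdot F\subseteq U$ as in the proof of Theorem~\ref{VanDan}, which uses only compactness, and take $H_1=\{g\in F: gF_{s(g)}\subseteq F\}$. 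The complement is the image of a compact set, hence closed. Openness of $H_1$ is the delicate point; if it fails, I would instead apply Theorem~\ref{VanDan} to $\mathcal{G}$ itself and intersect the resulting subgroupoid with $\mathcal{X}_1$. That intersection is clopen in $\mathcal{X}_1$ and is a wide subgroupoid of $\mathcal{X}$. Moreover, since $U$ is clopen in the closed subspace $\mathcal{X}_1$, we can write $U=\mathcal{X}_1\cap U'$ for some open $U'\subseteq\mathcal{G}_1$ containing $\mathcal{G}_0$. To arrange this, enlarge $U$ by $\mathcal{G}_1\setminus\mathcal{X}_1$, noting that $\mathcal{X}_1$ is closed.

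\textbf{Step 2: normal core.} Define $\mathcal{N}_1=\{h\in H_1: ghg^{-1}\in H_1 \text{ for all } g\in\mathcal{X}_{s(h)}\}$. This is a wide normal subgroupoid: units are central, and closure under multiplication, inversion and conjugation holds fiberwise exactly as for groups. It is contained in $H_1\subseteq U$. Its complement in $H_1$ is $\mathrm{pr}_1\bigl(\{(h,g)\in H_1\times_{\mathcal{X}_0}\mathcal{X}_1: ghg^{-1}\notin H_1\}\bigr)$. Since $H_1$ is clopen, this is a projection of a compact set, so $\mathcal{N}_1$ is open in $H_1$, i.e.\ closed in $H_1$ as well.

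\textbf{Step 3: openness of the core, the main obstacle.} To get openness of $\mathcal{N}_1$, use compactness. The conjugation map $c:\mathcal{X}_1\times_{\mathcal{X}_0}\mathcal{X}_1\to\mathcal{X}_1$, $(g,h)\mapsto ghg^{-1}$, is continuous, and $c(g,e_x)=e_x\in H_1$ for every unit. So $c^{-1}(H_1)$ is an open set containing $\mathcal{X}_1\times_{\mathcal{X}_0}\mathcal{X}_0$. A tube-lemma argument then produces a clopen $V\ni\mathcal{X}_0$ with $\mathcal{X}_1\times_{\mathcal{X}_0}V\subseteq c^{-1}(H_1)$: use compactness of $\mathcal{X}_1$ and the description of $\mathcal{X}_0$ as the intersection of its clopen neighborhoods, exactly as in the $Z_V$ argument in Theorem~\ref{VanDan}. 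Next, apply Step~1 inside $V\cap H_1$ to obtain a clopen wide subgroupoid $H'$. Its normal closure, generated by conjugates, stays inside $H_1$ but need not be a subgroupoid. Instead, take $\mathcal{N}_1=\bigcap_{g}gH'g^{-1}$, computed fiberwise. This set contains $H''$ for any clopen wide $H''$ with $c(\mathcal{X}_1\times_{\mathcal{X}_0}H'')\subseteq H'$, so it is a neighborhood of $\mathcal{X}_0$. The key claim is that the core is open near every point, not just near units. For $h\in\mathcal{N}_1$, one has $\mathcal{N}_1\supseteq h\cdot(\mathcal{N}_1\cap\mathcal{X}_{s(h)})$, and openness at $h$ needs continuity across nearby fibers. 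I expect this fiber-varying openness to be the hardest part. I would handle it by showing that $\mathcal{N}_1$ is a union of translates $h\cdot H''$ using the fiberwise group structure, which holds because $H''\subseteq\mathcal{N}_1$ near units. Together with closedness from Step~2, this makes $\mathcal{N}_1$ clopen.
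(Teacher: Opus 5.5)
Your Step~1 fallback is correct and is exactly what the paper does: apply Theorem~\ref{VanDan} to $\mathcal{G}$ with the open set $U\cup(\mathcal{G}_1\setminus\mathcal{X}_1)$, then intersect with $\mathcal{X}_1$. The gap is in Steps~2--3, where open and closed are swapped. The projection-of-a-compact-set argument shows that the bad set $\mathrm{pr}_1(\cdots)$ is \emph{closed}, so what it proves is that $\mathcal{N}_1$ is \emph{open} in $H_1$. Openness is therefore the easy half; the same slip appears in your first Step~1 variant. Closedness of $\mathcal{N}_1$ needs the bad set to be open, i.e.\ $\mathrm{pr}_1\colon H_1\times_{\mathcal{X}_0}\mathcal{X}_1\to H_1$ must be an open map. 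But this map is a base change of $s|_{\mathcal{X}_1}$, which you yourself observed need not be open. So the ``closedness from Step~2'' you invoke at the end was never established, and Step~3 works on the wrong half. Its translate argument would not give openness anyway, since $h\cdot H''_{s(h)}$ lies in a single fiber and is in general not a neighborhood of $h$ in $\mathcal{X}_1$.

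This is not just a missing verification: the core formed by conjugating only with arrows of $\mathcal{X}$ can fail to be closed. Let $S_3$ act on $Y=\{\infty\}\cup\bigsqcup_{n\geq 1}S_3/A$ with $A=\langle(12)\rangle$, the $n$-th copy converging to $\infty$. The action groupoid $\mathcal{G}=S_3\times Y\rightrightarrows Y$ satisfies the hypotheses. Choosing the coset $A$ in each copy as the section gives a skeleton $\mathcal{X}$ with isotropy $A$ over $\sigma(n)$ and $S_3$ over $\infty$. Then $H=A\times Y$ is a compact open wide subgroupoid of $\mathcal{G}$. Your $\mathcal{N}_1$ for the clopen wide subgroupoid $H\cap\mathcal{X}$ (and likewise your Step~3 core $\bigcap_g gH'g^{-1}$) is all of $A$ over each $\sigma(n)$, because the isotropy there is abelian, but trivial over $\infty$. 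Hence $((12),\sigma(n))\to((12),\infty)\notin\mathcal{N}_1$.

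The missing idea, which is the paper's, is to conjugate by \emph{all} arrows of $\mathcal{G}$. Take $c\colon\mathrm{Iso}(H)\times_{\mathcal{G}_0}\mathcal{G}_1\to\mathcal{G}_1$, $(r,g)\mapsto grg^{-1}$, and set $H_{\mathrm{bad}}=p_1\bigl(c^{-1}(\mathcal{G}_1\setminus H_1)\bigr)$. This set is compact, hence closed. It is also open, because $p_1$ is a base change of a structure map of $\mathcal{G}$, which is open by hypothesis. Hence $(H_1\cap\mathcal{X}_1)\setminus H_{\mathrm{bad}}$ is clopen in $\mathcal{X}_1$. It is a wide subgroupoid because $H$ is one, and it is stable under $\mathcal{G}$-conjugation, so in particular it is normal in $\mathcal{X}$. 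In the example it is the unit section, which is clopen.
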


\begin{proof}
Intersecting with $\mathcal{X}$ the basis given by Theorem~\ref{VanDan} gives a basis
\[
\mathcal{B}=\{H\cap \mathcal{X}\}
\]
of comapct open wide subgroupoids of $\mathcal{X}$ which are actually clopen because $\mathcal{X}_1$ is Hausdorff.
We refine this basis to a basis by normal subgroupoids.

Fix $H':=H\cap \mathcal{X}\in \mathcal{B}$.
We will define a subset $H_{\mathrm{bad}}\subseteq \Mor(H)$ such that
\[
H'_{\mathrm{bad}}:=H_{\mathrm{bad}}\cap \Mor(\mathcal{X})
\subseteq \Mor(H)\cap \Mor(\mathcal{X})=H'_1
\]
is clopen in $\Mor(\mathcal{X})$, and such that
\[
\mathrm{Core}(H'):=H'_1\setminus H'_{\mathrm{bad}}
\]
is the morphism space of a clopen wide normal subgroupoid of $\mathcal{X}$ contained in $H'$.

Let $\Iso(H)\subseteq \Mor(H)$ denote the isotropy (loops), and consider the conjugation action map
\[
c:\Iso(H)\times_{\,\Obj(\mathcal{G})}\Mor(\mathcal{G})\longrightarrow \Mor(\mathcal{G}),
\qquad (r,g)\longmapsto grg^{-1},
\]
(where the fiber product encodes the usual composability condition for conjugation).

Let $C\subseteq \Iso(H)\times_{\Obj(\mathcal{G})}\Mor(\mathcal{G})$ be the inverse image of the complement:
\[
C:=c^{-1}\!\bigl(\Mor(\mathcal{G})\setminus \Mor(H)\bigr).
\]
Since $\Mor(\mathcal{G})\setminus \Mor(H)$ is clopen and $c$ is continuous, the set $C$ is clopen.
Moreover $C$ is compact (it is closed in a compact space).

Let $p_1:\Iso(H)\times_{\Obj(\mathcal{G})}\Mor(\mathcal{G})\to \Iso(H)$ be the projection and define
\[
H_{\mathrm{bad}}:=p_1(C)\subseteq \Iso(H)\subseteq \Mor(H).
\]
Then $H_{\mathrm{bad}}$ is compact as the image of a compact set, hence closed in $\Iso(H)$.
Furthermore, $p_1$ is an open map since it is obtained by pullback of the target map (which is open by hypothesis),
so $H_{\mathrm{bad}}$ is open as well. Hence $H_{\mathrm{bad}}$ is clopen in $\Iso(H)$.

Finally set $H'_{\mathrm{bad}}:=H_{\mathrm{bad}}\cap H'_1$.
Since $H'_1$ is a subspace of $\Iso(H)$, $H'_{\mathrm{bad}}$ is clopen in $H'_1$ hence clopen in $\mathcal{X}$ (here we use that $H'_1$ is clopen in $\mathcal{X}$).
By construction, $\mathrm{Core}(H')=H'_1\setminus H'_{\mathrm{bad}}$ is stable under conjugation by arrows of $\mathcal{X}$,
hence defines a clopen wide normal subgroupoid of $\mathcal{X}$ contained in $H'$.
\end{proof}
Having produced a normal clopen neighborhood basis, we can now reconstruct $\mathcal X$ as the inverse limit of its normal quotients.
\begin{lemma}[Reconstruction as an inverse limit of quotients]
\label{lem:X-invlim-quotients}
Let $\mathcal{G}$ be a groupoid object whose object and morphism spaces are light profinite, and whose source and target maps are open and let $\mathcal{X}$ be the skeleton of $\mathcal{G}$ from Lemma~\ref{lem:skeletal-replacement}. Let $\mathcal{B}$ be a neighborhood basis
of $\mathcal{X}^{(0)}$ in $\mathcal{X}^{(1)}$ consisting of clopen wide normal subgroupoids (as in
Theorem~\ref{thm:normal-basis-skeleton}). For $H\in\mathcal{B}$ write $\mathcal{X}/H$ for the quotient groupoid, and order $\mathcal{B}$
by reverse inclusion.

Then the canonical map of groupoid objects
\[
\mathcal{X}\longrightarrow \varprojlim_{H\in\mathcal{B}}\, \mathcal{X}/H
\]
is an isomorphism.
\end{lemma}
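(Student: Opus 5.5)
We work with the underlying topological groupoid. Since $\mathcal{X}$ is skeletal (Lemma~\ref{lem:skeletal-replacement}(iii)), every arrow is a loop: $s=t$ on $\mathcal{X}^{(1)}$, so $\mathcal{X}$ is a group bundle over the light profinite space $\mathcal{X}^{(0)}$. For a clopen wide normal subgroupoid $H\in\mathcal{B}$, the quotient $\mathcal{X}/H$ has the same object space $\mathcal{X}^{(0)}$ and morphism space $\mathcal{X}^{(1)}/H^{(1)}$, where $g\sim g'$ iff $s(g)=s(g')$ and $g^{-1}g'\in H^{(1)}$. Normality makes the fibrewise cosets form groups, so $\mathcal{X}/H$ is again a group bundle. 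The transition maps for $H'\subseteq H$ are the evident surjections. Because objects do not change, the canonical map is the identity on objects, and it suffices to show that
\[
\phi:\mathcal{X}^{(1)}\longrightarrow \varprojlim_{H\in\mathcal{B}}\mathcal{X}^{(1)}/H^{(1)}
\]
is a homeomorphism, compatible with the structure maps.

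\textbf{Step 1: the quotients are compact Hausdorff (in fact profinite).} The relation $R_H=\delta^{-1}(H^{(1)})\subseteq \mathcal{X}^{(1)}\times_{\mathcal{X}^{(0)}}\mathcal{X}^{(1)}$ is clopen, since $H^{(1)}$ is clopen and $\delta$ is continuous. In particular it is closed, so the quotient of the compact space $\mathcal{X}^{(1)}$ is compact Hausdorff.

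The quotient map is open. For $W$ open, its saturation is $m(W\times_{\mathcal{X}^{(0)}} H^{(1)})$, and this is open because $H^{(1)}$ is open. The cleanest argument is that each class $gH$ contains $g\cdot H^{(1)}$, and translation by open neighbourhoods is handled as in the proof of Lemma~\ref{lem:etale-clopen-subgroupoid}.

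Images of clopen sets are therefore clopen, so $\mathcal{X}^{(1)}/H^{(1)}$ is profinite. The limit is then a compact Hausdorff space, and $\phi$ is continuous.

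\textbf{Step 2: $\phi$ is injective.} Suppose $\phi(g)=\phi(g')$. Then $s(g)=s(g')$, and $g^{-1}g'\in H^{(1)}$ for all $H\in\mathcal{B}$. Since $\mathcal{B}$ is a neighbourhood basis of $\mathcal{X}^{(0)}$ and every point of the compact Hausdorff space $\mathcal{X}^{(1)}$ outside $\mathcal{X}^{(0)}$ can be separated from $\mathcal{X}^{(0)}$ by a clopen neighbourhood $U$, we get $\bigcap_{H\in\mathcal{B}} H^{(1)}=\mathcal{X}^{(0)}$. Hence $g^{-1}g'$ is a unit and $g=g'$.

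\textbf{Step 3: $\phi$ is surjective.} A compatible family $(c_H)$ of cosets is a cofiltered family of nonempty closed subsets $c_H\subseteq\mathcal{X}^{(1)}$, cofiltered because $\mathcal{B}$ is directed by Theorem~\ref{thm:normal-basis-skeleton}. By compactness their intersection is nonempty, and any point in it maps to $(c_H)$.

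A continuous bijection from a compact space to a Hausdorff space is a homeomorphism. Compatibility with $s$, $t$, unit, inverse and composition holds levelwise, and limits of groupoid objects are computed levelwise, so this gives the isomorphism of groupoid objects.

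\textbf{Main obstacle.} The delicate point is Step 1: showing that the quotients are genuinely Hausdorff and profinite, i.e.\ that the quotient map is open. A secondary point is that the topological isomorphism upgrades to one in $\PykS$. For the latter I would use that the embedding of compact Hausdorff spaces into $\PykS$ preserves cofiltered limits. For openness of the quotient, I would first try writing the saturation of $W$ as $p_1(\delta^{-1}(H^{(1)})\cap (\mathcal{X}^{(1)}\times_{\mathcal{X}^{(0)}} W))$, where $p_1$ is the first projection. This is open because $p_1$ is a base change of the open map $s$.
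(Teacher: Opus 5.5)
Your argument is correct and is essentially the paper's proof. Injectivity holds because $\mathcal{B}$ separates every non-unit from $\mathcal{X}^{(0)}$. Surjectivity follows from the finite intersection property of the nested closed cosets in the compact space $\mathcal{X}^{(1)}$, and a continuous bijection from a compact space to a Hausdorff space finishes. Two of your details improve on the paper:
\begin{itemize}
\item You treat the case $s(g)\neq s(g')$ in the injectivity step. The paper writes $g^{-1}g'$ without checking composability.
\item You derive Hausdorffness of $\mathcal{X}^{(1)}/H^{(1)}$ from closedness of $R_H$ in the compact Hausdorff space $\mathcal{X}^{(1)}\times\mathcal{X}^{(1)}$. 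The paper only asserts this.
\end{itemize}

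You should, however, drop the claim that $q_H$ is open. It is false in general, and nothing in the proof needs it. Your justification uses that the source map of $\mathcal{X}$ is open, but only the source and target of $\mathcal{G}$ are assumed open, and restricting to the transversal $\sigma(\pi_0(\mathcal{G}))$ can destroy this. Here is a counterexample:
\begin{itemize}
\item Let $\mathbb{Z}/2$ act on $\{0\}\cup\{\pm 1/n\}$ by $x\mapsto -x$, take $\mathcal{G}$ the action groupoid, and take the section $[x]\mapsto |x|$.
\item Then $\mathcal{X}^{(1)}$ consists of the units together with the nontrivial stabilizer element $\tau_0$ at $0$.
\item The point $\tau_0$ is isolated in $\mathcal{X}^{(1)}$, but $s(\{\tau_0\})=\{0\}$ is not open in $\{0\}\cup\{1/n\}$.
\item For the clopen wide normal subgroupoid $H=\mathcal{X}$, the map $q_H$ is just $s$, which is not open.
\item Your saturation formula also fails here: for $W=\{\tau_0\}$ the saturation is $\{\tau_0,e_0\}$, which is not open.
\end{itemize}

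The compact-to-Hausdorff step only uses Hausdorffness, which your closed-relation argument already gives. If you also want profiniteness of the quotients, argue as in Lemma~\ref{collapse}. A connected subset of $\mathcal{X}^{(1)}/H^{(1)}$ lies in a single fibre $\mathcal{X}_y/H_y$. That fibre is finite because $H_y$ is an open subgroup of the compact group $\mathcal{X}_y$, so the connected subset is a point.
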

\begin{proof}
Since limits of simplicial objects are computed levelwise, it suffices to check that the induced maps on objects and morphisms
\[
\mathcal X_0 \longrightarrow \varprojlim_{H\in\mathcal B} (\mathcal X/H)_0,
\qquad
\mathcal X_1 \longrightarrow \varprojlim_{H\in\mathcal B} (\mathcal X/H)_1
\]
are isomorphisms.

For the object space there is nothing to prove: by definition $(\mathcal X/H)_0=\mathcal X_0$ for all $H$, and the transition
maps are identities, hence
\[
\varprojlim_{H\in\mathcal B} (\mathcal X/H)_0 \cong \mathcal X_0.
\]

For morphisms, write $q_H:\mathcal X_1\to \mathcal X_1/H_1=(\mathcal X/H)_1$ for the quotient map. Since $\mathcal B$ is ordered
by reverse inclusion, the maps $q_H$ assemble into a canonical map
\[
q:\mathcal X_1 \longrightarrow \varprojlim_{H\in\mathcal B} \mathcal X_1/H_1.
\]
We claim that $q$ is a homeomorphism.

\smallskip\noindent
\emph{Injectivity.}
If $g\neq g'$ in $\mathcal X_1$, then (as $\mathcal X_1$ is Hausdorff and totally disconnected) there exists a clopen neighborhood
$U\subseteq \mathcal X_1$ of $\mathcal X_0$ such that $g^{-1}g'\notin U$. Choose $H\in\mathcal B$ with $H_1\subseteq U$.
Then $g^{-1}g'\notin H_1$, hence $q_H(g)\neq q_H(g')$, so $q$ is injective.

\smallskip\noindent
\emph{Surjectivity.}
Let $(\bar g_H)_{H\in\mathcal B}$ be a compatible family with
$\bar g_H\in \mathcal X_1/H_1$.
Choose representatives $g_H\in \mathcal X_1$ with $q_H(g_H)=\bar g_H$.
For each $H\in\mathcal B$ set
\[
C_H := g_H H_1 \subseteq \mathcal X_1.
\]
Each $C_H$
is a nonempty closed subset of $\mathcal X_1$.

We claim that the family $(C_H)_{H\in\mathcal B}$ has the finite intersection property.
Indeed, given $H_1,\dots,H_n\in\mathcal B$, choose $K\in\mathcal B$ with
$K\subseteq H_i$ for all $i$ (cofilteredness).
Compatibility implies $q_{H_i}(g_K)=q_{H_i}(g_{H_i})$, hence
$g_K\in g_{H_i} (H_i)_1 = C_{H_i}$ for each $i$.
Thus $g_K\in \bigcap_{i=1}^n C_{H_i}$, so this intersection is nonempty.

Since $\mathcal X_1$ is compact, we conclude that
\[
\bigcap_{H\in\mathcal B} C_H \neq \varnothing.
\]
Pick $g$ in this intersection. Then for every $H\in\mathcal B$ we have
$g\in g_H H_1$, hence $q_H(g)=q_H(g_H)=\bar g_H$.
Therefore $q(g)=(\bar g_H)_H$, proving surjectivity.
\smallskip\noindent

The map $q$ is continuous by construction. Since $\mathcal X_1$ is compact and the limit $\varprojlim_H \mathcal X_1/H_1$ is
Hausdorff (inverse limit of compact Hausdorff spaces), a continuous bijection is a homeomorphism. Hence
\[
\mathcal X_1 \xrightarrow{\ \sim\ } \varprojlim_{H\in\mathcal B} (\mathcal X/H)_1.
\]

\end{proof}

To prove that $\mathcal X/\mathcal N$ is an inverse limit of groupoids with finite spaces of objects and morphisms, we will need a way to collapse a quotient groupoid onto a specified connected component; the following lemma provides such a section.
\begin{lemma}[Collapse Maps]\label{collapse}
Let $\mathcal{G}$ be a groupoid object whose object and morphism spaces are light profinite, and whose source and target maps are open and let $\mathcal{X}$ be the skeleton of $\mathcal{G}$ from Lemma~\ref{lem:skeletal-replacement}. 
Let $\mathcal{N}$ be a clopen normal subgroupoid of $\mathcal{X}$, and let $E = \mathcal{X}/\mathcal{N}$. For any basepoint $x \in \mathcal{X}_0$, there exists a collapse map $\Phi: E \to  E_x$ which is a section of the inclusion $E_x\hookrightarrow E$.
\end{lemma}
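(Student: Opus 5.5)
The plan is to read $E_x$ as the isotropy group of $E$ at the basepoint $x$, regarded as a one-object groupoid with object $x$ and morphisms $E(x)=\Hom_E(x,x)$. The map $\Phi$ sends every object to $x$, so its only content is on morphisms. Since $\mathcal X$ is skeletal (Lemma~\ref{lem:skeletal-replacement}(iii)), so is $E$, and every arrow of $E$ is a loop. Hence a functor $E\to E_x$ is exactly a continuous family of group homomorphisms $\phi_y:E(y)\to E(x)$, for $y\in\mathcal X_0$. It is a section of (more precisely, a retraction onto) the inclusion $E_x\hookrightarrow E$ precisely when $\phi_x=\mathrm{id}$.

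\textbf{Step 1: topology of $E$.} I first record that $E_1=\mathcal X_1/\mathcal N_1$ is compact Hausdorff and totally disconnected, since $\mathcal N_1$ is clopen and the coset relation is open and closed. The projection $s_E:E_1\to\mathcal X_0$ is open, because $s$ is open. Its fibres $E(y)$ are finite and discrete: the $\mathcal N$-cosets are open in the compact fibre $s^{-1}(y)\cap\mathcal X_1$.

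\textbf{Step 2: local trivialisation at $x$, the main obstacle.} For each $a\in E(x)$, I choose a clopen $V_a\ni a$ in $E_1$, with the $V_a$ pairwise disjoint and $V_a\cap E(x)=\{a\}$. The restriction $s_E|_{V_a}$ is an open map of light profinite sets. As in the proof of Lemma~\ref{lem:skeletal-replacement} (via \cite{erratum}), it admits a continuous section $\sigma_a$ over the clopen set $s_E(V_a)\ni x$, and this section can be arranged with $\sigma_a(x)=a$; if necessary I first shrink to a clopen neighbourhood on which a section through $a$ exists. On $U_0=\bigcap_a s_E(V_a)$ this gives a continuous map
\[
\tau^{-1}:U_0\times E(x)\to E_1|_{U_0},\qquad (y,a)\mapsto \sigma_a(y).
\]

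Four conditions must then hold on a neighbourhood of $x$:
\begin{itemize}
\item injectivity, automatic from disjointness of the $V_a$;
\item $\sigma_e$ equal to the unit section;
\item multiplicativity, $\sigma_a(y)\sigma_b(y)=\sigma_{ab}(y)$;
\item surjectivity onto $E(y)$.
\end{itemize}
Each failure locus is closed, since it is defined by disagreement of continuous maps into a Hausdorff space. Each is also disjoint from $\{x\}$. For surjectivity I use that $E_1|_{U_0}\setminus\bigcup_a\sigma_a(U_0)$ is compact and misses the fibre over $x$, so its image under $s_E$ is closed and avoids $x$. Since $\mathcal X_0$ is profinite, I can then choose a clopen $U\ni x$ avoiding all these finitely many closed sets. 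On $U$ this yields an isomorphism of group families $\tau:E|_U\xrightarrow{\sim}U\times E(x)$ with $\tau|_{E(x)}=\mathrm{id}$.

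\textbf{Step 3: define $\Phi$.} On objects, $\Phi$ is constant at $x$. On morphisms, set $\Phi(g)=\mathrm{pr}_2\tau(g)$ if $s_E(g)\in U$, and $\Phi(g)=e_x$ otherwise. Each $\phi_y$ is a homomorphism: for $y\in U$ it is the trivialisation, and for $y\notin U$ it is the trivial homomorphism. $\Phi$ is continuous because $s_E^{-1}(U)$ is clopen in $E_1$, and $\Phi$ is given by a continuous formula on each of the two pieces. Functoriality holds because composition in $E$ only occurs within a single fibre, and $\phi_x=\mathrm{id}$ gives the section property.

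The only delicate point is Step 2. There, the openness of $s_E$ (inherited from openness of $s,t$) together with lightness is what produces continuous local sections, and compactness plus finiteness of $E(x)$ lets all the local conditions be enforced simultaneously on one clopen set.
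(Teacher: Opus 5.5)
There is a genuine gap, and it is in Step 1. The claim that $s_E\colon E_1\to\mathcal{X}_0$ is open does not follow from the hypotheses and is false in general. All of Step 2 depends on it: the openness of $s_E|_{V_a}$, the sections $\sigma_a$ through $a$, and the trivialisation $\tau$.

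Openness is only assumed for $s,t$ on $\mathcal{G}$. The skeleton has $\mathcal{X}_1=s^{-1}(\mathcal{X}_0)\cap t^{-1}(\mathcal{X}_0)$, which is the isotropy bundle of $\mathcal{G}$ over the cross-section $\sigma(\pi_0(\mathcal{G}))$. The source map of such a bundle need not be open. Here is a counterexample in exactly the orbifold situation the paper targets:
\begin{itemize}
\item Let $\mathbb{Z}/2$ act on $\mathbb{Z}_p$ ($p$ odd) by $y\mapsto -y$, and let $\mathcal{G}$ be the action groupoid.
\item The closed set $\{0\}\cup\{p^k u : k\ge 0,\ u\in\mathbb{Z}_p^\times,\ u \bmod p\in\{1,\dots,(p-1)/2\}\}$ is the image of a continuous section of $\mathbb{Z}_p\to\mathbb{Z}_p/\{\pm 1\}$.
\item The resulting skeleton has isotropy $\mathbb{Z}/2$ at $0$ and trivial isotropy at every other object.
\item Take $\mathcal{N}$ to be the unit arrows, which are clopen and normal, so that $E=\mathcal{X}$, and take $x=0$.
\end{itemize}
The non-trivial arrow $\tau_0\in E(0)$ is an isolated point of $E_1$, and $s_E(\{\tau_0\})=\{0\}$ is not open. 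So no continuous section through $\tau_0$ exists over any neighbourhood of $0$. Nor can any trivialisation $E|_U\cong U\times E(x)$ exist, since $E(y)$ is trivial for $y\neq 0$. The obstacle is not technical: isotropy can drop near $x$, so the local triviality you aim for is simply false.

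Separately, your surjectivity step is wrong as written. The set $E_1|_{U_0}\setminus\bigcup_a\sigma_a(U_0)$ is the complement of a closed set in a compact set, hence open, not compact.

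The lemma only needs, for $y$ near $x$, a homomorphism $E(y)\to E(x)$ depending continuously on $y$, not an isomorphism. The paper builds exactly this, using closedness of $s_E$ (from compactness) instead of openness, and no sections:
\begin{itemize}
\item Choose pairwise disjoint clopen sets $V_a\ni a$ for $a\in E(x)$.
\item By continuity of composition, shrink them to clopen sets $W_a\subseteq V_a$ with $W_aW_b\subseteq V_{ab}$ on composable pairs.
\item Choose a clopen $U\ni x$ disjoint from the closed set $s_E\bigl(E_1\setminus\bigcup_a W_a\bigr)$, which misses $x$.
\end{itemize}
Then every arrow over $U$ lies in exactly one $W_a$. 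Skeletality plus disjointness of the $V_a$ forces each product into the correct tube. The ``tube index'' $h\mapsto a$ for $h\in W_a\cap s_E^{-1}(U)$, extended by $h\mapsto e_x$ off $s_E^{-1}(U)$, is the required continuous functor. With this in place of $\mathrm{pr}_2\circ\tau$, your Step 3 goes through unchanged. In the example above it gives the trivial homomorphism $E(y)\to E(0)$ for $y\neq 0$.
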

\begin{proof}
Fix $x \in E_0$. Observe that since $\mathcal{N}_x$ is an open subgroup of the compact group $\mathcal{X}_x$, the quotient 
\[ E_x=\mathcal{X}_x/\mathcal{N}_x \] 
is finite.
Moreover, $E_1$ is a compact Hausdorff space as a closed quotient of $\mathcal{X}_1$.
Consider the map $E_1\to \mathcal{X}_0$; we claim that $E_1$ is a profinite space. Since it is compact Hausdorff, it suffices to prove it is totally disconnected.
To see that $E_1$ is totally disconnected, let $C$ be a connected component of $E_1$. Its image under the source map is a connected component of the totally disconnected space $\mathcal{X}_0$, hence a single point. Therefore, $C$ belongs entirely to a single discrete fiber $E_y$. Since the fibers are finite (hence discrete), we conclude $C$ must be a single point. Thus, $E_1$ is totally disconnected.

The fiber is a finite group $E_x = \{g_1, \dots, g_m\}$, where $g_1 = e_x$. Because $E_1$ is totally disconnected, we can isolate each $g_i$ in disjoint clopen sets $V_i \subset E_1$. By continuity of multiplication, there exist smaller clopen neighborhoods $W_i \subset V_i$ of $g_i$ satisfying $W_i W_j \subseteq V_k$ (where $g_k = g_i g_j$). Let $W = \bigcup_{i=1}^m W_i$. 

The set $Z = E_1 \setminus W$ is the closed set of arrows avoiding our tubes. Since $E_1$ is compact and $E_0$ is Hausdorff, the source map $s$ is closed. Thus, $s(Z)$ is a closed set in $E_0$. Since the entire fiber over $x$ is contained in $W$, we have $x \notin s(Z)$. The complement $E_0 \setminus s(Z)$ is an open neighborhood of $x$. Since $E_0$ is a Stone space, we may choose a basic clopen neighborhood $U$ such that $x \in U \subseteq E_0 \setminus s(Z)$ shrink $U$ further so that $U \subseteq u^{-1}(W_1)$. 

By construction, $s^{-1}(U) \subseteq W$. If we define the restricted tubes $W'_i = W_i \cap s^{-1}(U)$, we obtain a perfect clopen partition:
\[
s^{-1}(U) = W'_1 \sqcup W'_2 \sqcup \dots \sqcup W'_m.
\]
Crucially, this tightens the multiplication: if $a \in W'_i$ and $b \in W'_j$ are composable, their product $a \circ b \in V_k$. Since $\mathcal{X}$ (and thus $E$) is skeletal, $a$ and $b$ must originate at the same object $y \in U$. Thus, $a \circ b \in s^{-1}(U) \subseteq \bigcup W_r \subseteq \bigcup V_r$. Because the $V_r$ are mutually disjoint, we are forced to conclude $a \circ b \in W'_k$.

We define a global map $\Phi: E \to E_x$ piecewise:
\begin{itemize}
    \item For $h \in s^{-1}(U)$, $h$ belongs to a unique $W'_i$, and we set $\Phi(h) = g_i$.
    \item For $h \in s^{-1}(U^c)$, we set $\Phi(h) = e_x$.
\end{itemize}
Because $\mathcal{X}$ is skeletal, there are no composable arrows crossing between $U$ and $U^c$. The multiplication preserves the piecewise components, and our refinement $W'_i W'_j \subseteq W'_k$ guarantees $\Phi$ preserves multiplication on $U$. Thus $\Phi$ preserves composition globally. Because $U$, $U^c$, and $W'_i$ are all clopen, $\Phi$ is continuous.
\end{proof}

We will now prove that the quotient groupoid $E = \mathcal{X}/\mathcal{N}$ is an inverse limit of groupoids such that both the object and morphism sets are finite.

\begin{proposition}[$E = \mathcal{X}/\mathcal{N}$ is a profinite groupoid]\label{prop:E-profinite-groupoid}
Let $\mathcal{G}$ be a groupoid object whose object and morphism spaces are light profinite, and whose source and target maps are open and let $\mathcal{X}$ be the skeleton of $\mathcal{G}$ from Lemma~\ref{lem:skeletal-replacement}. 
Let $\mathcal{N}$ be a clopen normal subgroupoid of $\mathcal{X}$, then the quotient groupoid $E = \mathcal{X}/\mathcal{N}$ is isomorphic to an inverse limit of finite discrete groupoids (i.e., groupoids with finite sets of objects and morphisms).
\end{proposition}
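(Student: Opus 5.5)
We plan to exhibit $E$ as the limit of the finite quotients obtained by partitioning $E_0=\mathcal X_0$ into finitely many clopen pieces and, on each piece, collapsing $E$ onto a single chosen fiber group using the collapse maps of Lemma~\ref{collapse}. Since $\mathcal X$ is skeletal, $E$ is a family of groups over $E_0$: $s=t$ on $E_1$, each fiber $E_y=\mathcal X_y/\mathcal N_y$ is finite, and $E_1$ is profinite (this was shown in the proof of Lemma~\ref{collapse}). The indexing category is the set of pairs $(\mathcal P,\{x_P\}_{P\in\mathcal P},\Phi)$ where $\mathcal P$ is a finite clopen partition of $E_0$ and on each $P$ we have a continuous groupoid map $E|_P\to E_{x_P}$. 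To such data we associate the finite groupoid $D$ with $D_0=\mathcal P$ and $D_1=\bigsqcup_P E_{x_P}$, together with a continuous functor $E\to D$.

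\textbf{Step 1 (local trivializations).} For each $x\in E_0$, the proof of Lemma~\ref{collapse} produces a clopen $U\ni x$ and a clopen partition $s^{-1}(U)=\bigsqcup_i W_i'$ with $W'_iW'_j\subseteq W'_k$. Shrinking $U$, we want each $W'_i$ to meet every fiber over $U$ exactly once, i.e.\ $E|_U\cong U\times E_x$ as families of groups. The map $E|_U\to U\times E_x$ is a continuous homomorphism on each fiber, surjective near $x$. It is injective near $x$ because $|E_y|$ is locally constant in $y$, which should follow from openness of $s$ on $E_1$ (inherited from the openness assumptions and from $\mathcal N$ being clopen) together with finiteness of the fibers. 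Hence a neighbourhood of $x$ has fibers all of order $|E_x|$, and the fiberwise map is an isomorphism there. By compactness we can then pass to a finite clopen partition $\mathcal P_0$ of $E_0$ on which $E$ is piecewise a constant finite group.

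\textbf{Step 2 (the limit).} Refining the partition $\mathcal P_0$ to $\mathcal P$ keeps $E$ trivialized on each piece, so we take $D_{\mathcal P}$ with objects $\mathcal P$ and morphisms $\bigsqcup_{P}G_P$, and transition maps induced by coarsening. The refinements of $\mathcal P_0$ form a cofiltered system, and because $E_0$ is light, a countable cofinal sequence exists. On objects, the limit recovers $E_0$, since clopen partitions separate points. On morphisms, $\varprojlim D_{\mathcal P,1}=\varprojlim_{\mathcal P}\bigsqcup_P G_P\cong E_0\times_{\mathcal P_0}\bigsqcup G_P\cong E_1$. Composition and units are compatible, so $E\cong\varprojlim D_{\mathcal P}$. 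To conclude we check continuity and bijectivity exactly as in Lemma~\ref{lem:X-invlim-quotients}: a continuous bijection from a compact space to a Hausdorff one is a homeomorphism.

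\textbf{Main obstacle.} We expect the hardest step to be local constancy of the fiber order in Step 1. If openness of $s|_{E_1}$ is not directly available, we will first try to argue it from the fact that $E_1\to E_0$ is the quotient of the open map $s:\mathcal X_1\to\mathcal X_0$ by the open subgroupoid $\mathcal N$. The images of the clopen $s$-saturated tubes are then open, which gives lower semicontinuity of $|E_y|$. Upper semicontinuity comes from the compactness argument already used in Lemma~\ref{collapse}.
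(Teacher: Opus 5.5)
There is a genuine gap in Step~1: $E$ need not be locally trivial, and $|E_y|$ need not be locally constant in $y$. The openness you appeal to is not inherited. The skeleton $\mathcal X$ is the restriction of $\mathcal G$ to the closed transversal $\sigma(\pi_0(\mathcal G))$, and $s\colon\mathcal X_1\to\mathcal X_0$ is in general not open; the paper only ever uses openness of $s,t$ on $\mathcal G$. So your fallback, which starts from openness of $s$ on $\mathcal X_1$, fails as well.

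Concretely, let $\mathbb Z/2=\{e,\tau\}$ act on $S=\{0\}\cup\{\pm 1/n : n\geq 1\}$ by $x\mapsto -x$, and let $\mathcal G$ be the action groupoid. Both of its spaces are light profinite and $s,t$ are open. With the continuous section $\sigma([x])=|x|$ one gets $\mathcal X_0=\{0\}\cup\{1/n\}$ and $\mathcal X_1=\{(e,y): y\in\mathcal X_0\}\cup\{(\tau,0)\}$, where $(\tau,0)$ is an isolated point. Hence the units form a clopen normal wide subgroupoid $\mathcal N$, and $E=\mathcal X$ has isotropy $\mathbb Z/2$ over $0$ but trivial isotropy over every $1/n$.

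The clopen tube $W_2'=\{(\tau,0)\}$ has the non-open image $\{0\}$. So $E|_U\to U\times\mathbb Z/2$ is not surjective over any clopen $U\ni 0$; the lower semicontinuity you hoped to get from openness of images of clopen tubes is exactly what breaks. Step~2 inherits the problem. A limit of groupoids $D_{\mathcal P}$ with a fixed group on each piece of $\mathcal P_0$ and identity transition maps on groups is a locally constant family of groups over $E_0$, so it cannot recover $E$ here.

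The statement still holds in this example, just not via fibrewise isomorphisms. Take the finite groupoids with group $\mathbb Z/2$ over the piece $\{0\}\cup\{1/k : k\geq n\}$ and trivial groups over the singletons $\{1/k\}$, $k<n$; their limit is $E$, even though the fibres $E_{1/k}$ map non-isomorphically to the approximating groups. This is how the paper argues:
\begin{enumerate}
\item It indexes over all continuous functors $\Psi\colon E\to F$ to finite groupoids.
\item It separates objects by clopen partitions of $E_0$; skeletality means no arrow crosses pieces.
\item It separates two distinct arrows of one fibre $E_x$ by the collapse functor $\Phi\colon E\to E_x$ of Lemma~\ref{collapse}. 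This functor is a retraction of $E_x\hookrightarrow E$, hence injective on $E_x$, although it may be far from injective on nearby fibres.
\item It gets surjectivity from compactness and the finite intersection property, after replacing each $\Psi$ by its image so that the sets $\Psi_k^{-1}(c_\Psi)$ are nonempty.
\end{enumerate}

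Your opening framework of clopen partitions with collapse maps onto chosen fibres can be repaired the same way. Drop the requirement that $E_y\to E_{x_P}$ be an isomorphism for $y\in P$. For each point $y$, use a collapse map based at $y$ itself. Then pass to the cofiltered system generated by images of finite products of such functors; your original index set of triples is not obviously cofiltered on its own.
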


\begin{proof}
Let $\mathcal{I}$ be the category of continuous groupoid homomorphisms $\Psi \colon E \to F$, where $F$ is a finite discrete intransitive groupoid. Because the category of finite intransitive groupoids is closed under finite limits, the comma category $\mathcal{I}$ of finite quotients of $E$ is cofiltered. We must show that the canonical evaluation functor
\[
\Theta \colon E \longrightarrow \varprojlim_{(F, \Psi) \in \mathcal{I}} F
\]
is an isomorphism of topological groupoids.

By Lemma~\ref{collapse}, both $E_0$ and $E_1$ are compact Hausdorff totally disconnected spaces (Stone spaces). A continuous bijection from a compact space to a Hausdorff space is automatically a homeomorphism. Thus, it suffices to prove that continuous functors to finite groupoids separate points in both $E_0$ and $E_1$.

\textbf{Separating objects:} Let $y, z \in E_0$ be distinct objects. Because $E_0$ is totally disconnected, there exists a clopen partition $E_0 = U \sqcup V$ such that $y \in U$ and $z \in V$.
Let $F$ be the finite discrete groupoid with two objects $\{1, 2\}$ and only identity morphisms. Because $E$ is skeletal, there are no morphisms crossing between $U$ and $V$. Thus, we can legitimately define a continuous functor $\Psi \colon E \to F$ mapping all objects and morphisms in $E|_U$ to the object $1$ and its identity $e_1$, and those in $E|_V$ to the object $2$ and its identity $e_2$. We have $\Psi(y) = 1 \neq 2 = \Psi(z)$, so $y$ and $z$ are strictly separated.

\textbf{Separating morphisms:} Let $a, b \in E_1$ be distinct morphisms.
\begin{itemize}
    \item \emph{Case 1: $s(a) \neq s(b)$.} We map $E$ to the finite groupoid $F$ separating the source objects $s(a)$ and $s(b)$ as constructed above. Since $\Psi(a) = e_{\Psi(s(a))}$ and $\Psi(b) = e_{\Psi(s(b))}$, and $\Psi(s(a)) \neq \Psi(s(b))$, it follows that $\Psi(a) \neq \Psi(b)$.

    \item \emph{Case 2: $s(a) = s(b) = x$.} In this case, because $E$ is skeletal, both arrows are endomorphisms of $x$, meaning $a, b \in E_x$. 
    By Lemma~\ref{collapse} (Collapse Maps), there exists a continuous global collapse functor $\Phi \colon E \to E_x$. 
    Here, we view the finite group $E_x$ as a finite discrete groupoid with a single object. Because $\Phi$ is a section of the inclusion $E_x \hookrightarrow E$, it acts as the exact identity on $E_x$. Thus, $\Phi(a) = a$ and $\Phi(b) = b$. Because $a \neq b$ inside the finite group $E_x$, the continuous functor $\Phi$ to the finite groupoid $E_x$ strictly separates $a$ and $b$.
\end{itemize}

Since continuous functors to finite groupoids separate all objects and morphisms, the canonical map $\Theta$ is an injective continuous groupoid homomorphism, and is therefore a topological isomorphism onto its image. 
\smallskip\noindent
\\
\emph{Surjectivity.} We prove surjectivity uniformly for the spaces of objects and morphisms. Let $k \in \{0, 1\}$, so $E_k$ is either the object or morphism space of $E$.
Take a compatible point $(c_\Psi)_{\Psi \in \mathcal{I}}$ in the inverse limit $\varprojlim_{\Psi \in \mathcal{I}} F_k$. For each continuous functor $\Psi \colon E \to F$ in $\mathcal{I}$, define the preimage
\[
C_\Psi := \Psi_k^{-1}(c_\Psi) \subseteq E_k.
\]
Because $F_k$ is a discrete space and $\Psi_k$ is continuous, $C_\Psi$ is a clopen (hence closed) subset of $E_k$.

First, we verify that $C_\Psi$ is nonempty. Because $E$ is skeletal, its set-theoretic image $\mathrm{Im}(\Psi)$ is a finite skeletal subgroupoid of $F$. Thus, the corestriction $\Psi' \colon E \twoheadrightarrow \mathrm{Im}(\Psi)$ is an object in $\mathcal{I}$, and the inclusion $\iota \colon \mathrm{Im}(\Psi) \hookrightarrow F$ is a transition morphism. Compatibility in the limit says $c_\Psi = \iota_k(c_{\Psi'}) = c_{\Psi'}$, proving $c_\Psi$ actually lies in the set-theoretic image of $\Psi_k$. Therefore, $C_\Psi \neq \emptyset$.

Compatibility in the limit exactly says that if $\Phi \succeq \Psi$, meaning there is a transition morphism $u \colon G \to F$ such that $u \circ \Phi = \Psi$, then $u_k(c_\Phi) = c_\Psi$. Consequently, if $x \in C_\Phi$, we have $\Psi_k(x) = u_k(\Phi_k(x)) = u_k(c_\Phi) = c_\Psi$, which means $x \in C_\Psi$. Thus:
\[
C_\Phi \subseteq C_\Psi.
\]
Because the index category $\mathcal{I}$ is cofiltered, any finite collection of functors $\Psi_1, \dots, \Psi_n$ is dominated by a common refinement $\Phi$. Therefore, $C_\Phi \subseteq \bigcap_{i=1}^n C_{\Psi_i}$, meaning every finite intersection of the sets $C_\Psi$ is nonempty. This shows that $\{C_\Psi\}_{\Psi \in \mathcal{I}}$ is a family of nonempty closed subsets possessing the finite intersection property.

Because $E_k$ is compact, the total intersection is nonempty:
\[
\bigcap_{\Psi \in \mathcal{I}} C_\Psi \neq \emptyset.
\]
Pick an element $x$ in this intersection. Then for every $\Psi \in \mathcal{I}$, we have $x \in C_\Psi$, i.e., $\Psi_k(x) = c_\Psi$. Thus, $\Theta_k(x) = (c_\Psi)_{\Psi \in \mathcal{I}}$, proving that $\Theta$ is surjective on both objects and morphisms.
\end{proof}
Putting the previous lemmas together yields the desired profinite presentation of $\mathcal X$.
\begin{corollary}\label{prof}
    Let $\mathcal{G}$ be a groupoid object whose object and morphism spaces are light profinite, and whose source and target maps are open and let $\mathcal{X}$ be the skeleton of $\mathcal{G}$ from Lemma~\ref{lem:skeletal-replacement}. , then $\mathcal{X}$ is an inverse limit of finite groupoid
\end{corollary}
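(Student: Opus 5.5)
The plan is to combine Lemma~\ref{lem:X-invlim-quotients} with Proposition~\ref{prop:E-profinite-groupoid} and collapse the resulting double limit into a single cofiltered limit. By Theorem~\ref{thm:normal-basis-skeleton}, $\mathcal{X}$ admits a basis $\mathcal{B}$ of clopen wide normal subgroupoids. Lemma~\ref{lem:X-invlim-quotients} then gives an isomorphism of groupoid objects
\[
\mathcal{X}\xrightarrow{\ \sim\ }\varprojlim_{N\in\mathcal{B}}\mathcal{X}/N .
\]
By Proposition~\ref{prop:E-profinite-groupoid}, each $\mathcal{X}/N$ is isomorphic to $\varprojlim_{(F,\Psi)\in\mathcal{I}_N}F$. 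Here $\mathcal{I}_N$ is the cofiltered category of continuous functors from $\mathcal{X}/N$ to finite discrete skeletal groupoids.

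Rather than composing limits formally, I would let $\mathcal{J}$ be the category of continuous functors $\Psi:\mathcal{X}\to F$ with $F$ finite discrete skeletal (intransitive) groupoid, and show directly that $\mathcal{X}\to\varprojlim_{\mathcal{J}}F$ is an isomorphism. The argument would proceed in four steps.
\begin{enumerate}
\item $\mathcal{J}$ is cofiltered, because finite intransitive groupoids are closed under finite products and equalizers, exactly as in the proof of Proposition~\ref{prop:E-profinite-groupoid}.
\item Each functor $\Psi$ in some $\mathcal{I}_N$, precomposed with $\mathcal{X}\to\mathcal{X}/N$, lies in $\mathcal{J}$. Hence points of $\mathcal{X}_0$ and $\mathcal{X}_1$ are separated by objects of $\mathcal{J}$. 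For morphisms $g\ne g'$, first pick $N\in\mathcal{B}$ separating them, which is possible by the injectivity part of Lemma~\ref{lem:X-invlim-quotients}. Then separate their images in $\mathcal{X}/N$ using Proposition~\ref{prop:E-profinite-groupoid}.
\item Surjectivity onto the limit follows from the compactness and finite-intersection argument already used twice. The fibers $\Psi_k^{-1}(c_\Psi)$ are nonempty closed subsets of the compact space $\mathcal{X}_k$, after replacing $F$ by the image of $\Psi$.
\item A continuous bijection from a compact space to a Hausdorff space is a homeomorphism, levelwise for $k=0,1$. Since limits of groupoid objects are computed levelwise, this gives an isomorphism of groupoids.
\end{enumerate}

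The main obstacle is not the set-level argument but making the limit statement precise in $\PykS$. The limit over $\mathcal{J}$ is not a priori small, since $\mathcal{J}$ need not be essentially small. I would handle this by noting that every continuous $\Psi$ factors through some $\mathcal{X}/N$: its kernel on morphisms is a clopen wide normal subgroupoid and so contains some $N\in\mathcal{B}$. The cofinal subsystem of functors factoring through quotients by members of $\mathcal{B}$ is then essentially small.

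One must also check that the topological inverse limit agrees with the limit in pyknotic sets. This holds because the inclusion of compact Hausdorff spaces into $\PykS$ preserves limits, so the levelwise homeomorphisms give isomorphisms of groupoid objects in $\PykS$.

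Finally, when $\mathcal{X}$ is light, I would note that $\mathcal{B}$ can be chosen countable, since $\mathcal{X}_1$ is second countable. This yields a countable cofiltered system of finite groupoids, which is what will be needed later together with Proposition~\ref{light}.
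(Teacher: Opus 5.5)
\textbf{Comparison of routes.} Your route differs from the paper's in packaging. The paper just composes Lemma~\ref{lem:X-invlim-quotients} with Proposition~\ref{prop:E-profinite-groupoid}, leaving implicit that a cofiltered limit over $\mathcal{B}$ of cofiltered limits of finite groupoids is again a single such limit. You instead index over the category $\mathcal{J}$ of all continuous functors $\mathcal{X}\to F$ to finite skeletal groupoids. You use the two earlier results only to separate points, and run the compactness argument once, directly on $\mathcal{X}$. This gives a canonical, visibly cofiltered index system, and countability in the light case comes for free. The price is that you must redo surjectivity yourself. Incidentally, $\mathcal{J}$ is already essentially small: finite groupoids form an essentially small category, and the continuous functors into a fixed $F$ form a set. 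So your cofinality discussion is harmless but unnecessary.

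\textbf{The gap in Step 3.} The real problem is Step 3. It is inherited from the surjectivity part of the proof of Proposition~\ref{prop:E-profinite-groupoid}, so the paper's two-line proof has it too. The set-theoretic image of a continuous functor $\Psi:\mathcal{X}\to F$ need not be a subgroupoid when $\Psi$ is not injective on objects, even though both groupoids are skeletal. For example, let $\mathcal{X}$ have two discrete objects $x,y$, each with isotropy $\mathbb{Z}/2$. Let $F$ have one object with automorphism group $(\mathbb{Z}/2)^2$, and let $\Psi$ send the two nontrivial loops to $(1,0)$ and $(0,1)$. The image $\{(0,0),(1,0),(0,1)\}$ is not closed under composition. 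So ``replace $F$ by the image'' is not available, and nonemptiness of $\Psi_k^{-1}(c_\Psi)$ is unproved.

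\textbf{A repair.} Show that every $\Psi$ is dominated in $\mathcal{J}$ by some $\Psi'$ whose image is a subgroupoid.
\begin{enumerate}
\item Put $H_x:=\Psi(\mathcal{X}_x)$. The sets $Z_\phi:=s(\Psi_1^{-1}(\phi))=\{x:\phi\in H_x\}$ are closed, being compact images in a Hausdorff space.
\item Take a nonempty fibre $A$ of $\Psi_0$ over $f$, and pick $K$ maximal among $\{H_x\}_{x\in A}$. Then $C:=A\cap\bigcap_{\phi\in K}Z_\phi=\{x\in A:H_x=K\}$ and $D:=A\cap\bigcup_{\phi\in\mathrm{Aut}_F(f)\setminus K}Z_\phi$ are disjoint closed sets.
\item Hence some clopen $P$ with $C\subseteq P\subseteq A\setminus D$ satisfies $\bigcup_{x\in P}H_x=K$. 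Repeat on $A\setminus P$, where $K$ no longer occurs, so the process terminates.
\item This yields a clopen partition $P_1,\dots,P_r$ of $\mathcal{X}_0$. Let $\chi$ send $P_j$ to the $j$-th object of the discrete groupoid $D_r$; this is a functor because $\mathcal{X}$ is skeletal. The functor $\Psi'=(\Psi,\chi):\mathcal{X}\to F\times D_r$ then has a subgroupoid as its image.
\end{enumerate}
Compatibility now gives $c_\Psi=\mathrm{pr}(c_{\Psi'})\in\Psi_k(\mathcal{X}_k)$, and your finite-intersection argument goes through.
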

\begin{proof}
    From \ref{lem:X-invlim-quotients}, $\mathcal{X}$ is the inverse limit of the quotient $\mathcal{X}/\mathcal{N}$.
    Moreover from the last lemma each $\mathcal{X}/\mathcal{N}$ is an inverse limit of finite groupoids.
\end{proof}

\subsection{Profiniteness\label{lem:finite-approx-quotient}}
We now translate the inverse-limit description of $\mathcal X$ into a light profiniteness statement for its geometric realization.
\begin{theorem}\label{thm:X-to-Xpi-equivalence}
Let $\mathcal{G}$ be a groupoid object whose object and morphism spaces are light profinite, and whose source and target maps are open and let $\mathcal{X}$ be the skeleton of $\mathcal{G}$ from Lemma~\ref{lem:skeletal-replacement}. 
Let $X \in \Pyk(\mathcal S)$ be the geometric realization of the groupoid
$\mathcal X$.
Then $X$ is a light profinite anima.
\end{theorem}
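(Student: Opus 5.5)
By Corollary~\ref{prof}, $\mathcal X\simeq\varprojlim_{\alpha}\mathcal F_\alpha$ is a cofiltered limit of finite discrete groupoids in $\PykS$. The plan has three steps. First, show that $X=|\mathcal X|$ agrees with $\varprojlim_\alpha|\mathcal F_\alpha|$ computed in $\Pro(\mathcal S_\pi)\subseteq\PykS$. Second, deduce that $X$ is profinite. Third, upgrade to light profinite using Proposition~\ref{light}.

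\textbf{Step 1: realization commutes with the limit.} Each $B\mathcal F_\alpha:=|\mathcal F_\alpha|$ is a finite disjoint union of classifying spaces of finite groups, hence $\pi$-finite and $1$-truncated. Put $Y:=\varprojlim_\alpha B\mathcal F_\alpha$ in $\Pro(\mathcal S_\pi)$, viewed in $\PykS$. The compatible maps $\mathcal X\to\mathcal F_\alpha$ induce a map $f\colon X\to Y$.

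Both $X$ and $Y$ are coherent and $1$-truncated. For $X$, it is the realization of a groupoid with profinite object and morphism spaces, and $\mathcal X_0\to X$ is an effective epimorphism with profinite Čech nerve. For $Y$, it is a profinite $1$-type.

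By Corollary~\ref{whitehead}, it then suffices to check two things. The first is that $\pi_0(f)$ is a bijection. The second is that $f$ induces isomorphisms on $\pi_1$ at every basepoint.

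Skeletality makes these computations pointwise. $\Gamma(X)$ is the groupoid with object set $\mathcal X_0$ and automorphism groups $\mathcal X_x$, since $\Gamma$ commutes with colimits (as in the proof of Theorem~\ref{thm:Gamma-conservative-1type}). On the other side, $\pi_0\Gamma(Y)=\varprojlim_\alpha \pi_0(\mathcal F_\alpha)$. Each $\mathcal F_\alpha$ may be taken skeletal, as a quotient of the skeletal $\mathcal X$ (cf.\ the surjectivity argument of Proposition~\ref{prop:E-profinite-groupoid}). Hence $\pi_0\Gamma(Y)=\varprojlim_\alpha(\mathcal F_\alpha)_0=\mathcal X_0$. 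Similarly, $\pi_1$ at $x$ is $\varprojlim_\alpha \mathcal F_{\alpha,x}=\mathcal X_x$, because limits of $1$-types commute with $\pi_1$ when the $\lim^1$ of the $\pi_1$-towers vanishes, and these are finite groups.

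\textbf{Step 2: the expected main obstacle.} The delicate point is computing $\Gamma(Y)$ for a pro-$\pi$-finite object, and making sure the comparison is between the right objects.

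$\Gamma$ is a right adjoint as well, so it preserves limits. This gives $\Gamma(Y)=\varprojlim_\alpha\Gamma(B\mathcal F_\alpha)$ as spaces. Because the levels are finite $1$-types, the Milnor sequence gives $\pi_0=\varprojlim\pi_0$, $\pi_1=\varprojlim\pi_1$, and the $\lim^1\pi_1$ term is trivial. The Mittag-Leffler condition holds automatically for finite groups, and for the cofiltered (not necessarily sequential) index we can reduce using compactness.

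One must also check that the applications of Corollary~\ref{whitehead} match. The plan is to apply Theorem~\ref{thm:Gamma-conservative-1type} directly, since $\Gamma(f)$ is an equivalence of $1$-types by the computation above. This shows $X\simeq Y$ is profinite.

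\textbf{Step 3: lightness.} By Lemma~\ref{lem:skeletal-replacement}(i), $\mathcal X_0$ and $\mathcal X_1$ are light profinite sets. Hence the simplicial object $N\mathcal X$ lies in the image of $i_*\colon\CondAn^{\mathrm{light}}\hookrightarrow\PykS$.

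The left adjoint $i_*$ preserves colimits, so $X=|N\mathcal X|$ is also in this image. Proposition~\ref{light} then shows that $X$ is a light profinite anima.
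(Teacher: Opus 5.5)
Your proposal is correct and follows essentially the same route as the paper. You compare $|\mathcal X|$ with $\varprojlim_\alpha|\mathcal F_\alpha|$, reduce via the coherent $1$-type criterion (Corollary~\ref{whitehead}, i.e.\ Theorem~\ref{thm:Gamma-conservative-1type}) to $\pi_0$ and $\pi_1$, use skeletality to identify these with $\mathcal X_0$ and the isotropy groups, and get lightness from Proposition~\ref{light}. The only real difference is that you justify commuting $\pi_0,\pi_1$ past the cofiltered limit by hand, using that $\Gamma$ preserves limits plus a Milnor/Mittag-Leffler/compactness argument, whereas the paper cites \cite{DerivedStoneEmbedding} for this.
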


\begin{proof}
By Proposition~\ref{prof}, the groupoid object $\mathcal X$ can be written as a cofiltered limit
\[
\mathcal X \;\simeq\; \varprojlim_{i} F_i
\]
of finite groupoids $F_i$.
There is a canonical comparison map
\[
\alpha \colon \bigl|\mathcal X\bigr| \;=\; \Bigl|\varprojlim_i F_i\Bigr| \longrightarrow \varprojlim_i |F_i|.
\]
We claim that $\alpha$ is an equivalence. Since both source and target are coherent,
Proposition~\ref{whitehead} reduces this to checking that $\alpha$ induces an isomorphism on
$\pi_0$, and on $\pi_1$ at every basepoint.

\smallskip

\noindent\textbf{Step 1: $\pi_0$.}
Because each $F_i$ is a skeletal groupoid and $\pi_0$ commutes with cofiltered limits of
$\pi$-finite spaces \cite[Corollary~3.12]{DerivedStoneEmbedding}, we have
\[
\pi_0\!\left(\varprojlim_i |F_i|\right)
\;\simeq\; \varprojlim_i \pi_0(|F_i|)
\;\simeq\; \varprojlim_i (F_i)_0
\;\simeq\; \mathcal X_0
\;\simeq\; \pi_0\!\left(|\mathcal X|\right).
\]
Thus $\pi_0(\alpha)$ is an isomorphism.

\smallskip

\noindent\textbf{Step 2: $\pi_1$.}
Fix a point $x\in \pi_0(|\mathcal X|)\simeq \mathcal X_0$, and write $*_{x}$ for the
corresponding basepoint. Using again that $\pi_1(-,x)$ commutes with cofiltered limits of
$\pi$-finite pointed spaces \cite[Corollary~3.12]{DerivedStoneEmbedding} and that the $F$ and $E$ are skeletal, we obtain
\[
\pi_1\!\left(\varprojlim_i |F_i|,x\right)
\;\simeq\; \varprojlim_i \pi_1(|F_i|,x)
\;\simeq\; \varprojlim_i \bigl((F_i)_1 \times_{(F_i)_0} *_{x}\bigr)
\;\simeq\;(\varprojlim_i (F_i)_1) \times_{(F_i)_0} *_{x}\;\simeq\;\ \mathcal X_1 \times_{\mathcal X_0} *_{x}
\;\simeq\; \pi_1(|\mathcal X|,x),
\]
so $\pi_1(\alpha)$ is an isomorphism at every basepoint.

By Proposition~\ref{whitehead}, it follows that $\alpha$ is an equivalence, hence
\[
X \;=\; |\mathcal X| \;\simeq\; \varprojlim_i |F_i|
\]
is profinite.

Finally, $X$ is \emph{light}: indeed, it is the geometric realization of a groupoid object in
$\mathrm{CondAn}^{\mathrm{light}}$ (by construction of $\mathcal X$ in
Lemma~\ref{lem:skeletal-replacement}). Therefore Proposition~\ref{light} implies that $X$ is a
light profinite anima.
\end{proof}
\section{Proof of Conjecture 5.17}
We begin the proof by reducing \'etale-locally to the case of a compact atlas.
\begin{lemma}\label{lem:compact_atlas_etale_local}
Let $F$ be a smooth $p$-adic analytic Artin stack. Then there exists an \'etale effective epimorphism
\[
\coprod_i F_i \longrightarrow F
\]
with each $F_i$ a smooth $p$-adic analytic Artin stack, such that for every $i$ there is a surjective representable submersion
\[
M_i \longrightarrow F_i
\]
where each $M_i$ is homeomorphic to $\mathbb{Z}_p^d$.
\end{lemma}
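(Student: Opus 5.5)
Choose an atlas $p\colon M\to F$ by a surjective representable submersion from a $p$-adic manifold, as in the definition of a smooth Artin stack, and present $F\simeq |R\rightrightarrows M|$ with $R=M\times_F M$ via Theorem~\ref{thm:smooth-artin-stacks-are-1truncated}. Since $M$ is a $p$-adic analytic manifold of some dimension $d$ (we may treat each dimension separately, as $M$ is a disjoint union of its equidimensional pieces), every point of $M$ has an open chart neighbourhood homeomorphic, in fact analytically isomorphic, to $\mathbb{Z}_p^d$. Indeed, a chart to $\mathbb{Q}_p^d$ can be shrunk to a small ball $a+p^n\mathbb{Z}_p^d$, which is analytically isomorphic to $\mathbb{Z}_p^d$. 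Thus $M=\coprod_i M_i$ with each $M_i\cong\mathbb{Z}_p^d$ compact open; note that a $p$-adic manifold, being paracompact and having a basis of compact opens, admits such a \emph{disjoint} cover.

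For each $i$, define $F_i:=|R_i\rightrightarrows M_i|$, where $R_i:=s^{-1}(M_i)\cap t^{-1}(M_i)\subseteq R$ is the full subgroupoid on $M_i$. Since $M_i$ is open in $M$, $R_i$ is open in $R$, and the source and target maps $R_i\to M_i$ remain submersions, so $R_i\rightrightarrows M_i$ is a $p$-adic Lie groupoid. Moreover, $R_i=M_i\times_F M_i$, so $M_i\to F_i$ is the atlas with Čech nerve this groupoid; in particular $F_i$ is a smooth Artin stack and $M_i\to F_i$ is a surjective representable submersion. The key point to check here is that representability follows by base change along $M_i\to F_i\to F$: for $N\to F_i$, one has $M_i\times_{F_i}N\simeq M_i\times_F N$, because $F_i\to F$ is a monomorphism on the relevant diagonal, i.e.\ fully faithful as a map of groupoids.

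It remains to show that $\coprod_i F_i\to F$ is an étale effective epimorphism. Effective epimorphism is immediate: $\coprod M_i=M\to F$ is one and factors through $\coprod F_i$. For étaleness, I would mimic Lemma~\ref{lem:etale-clopen-subgroupoid}. By Lemma~\ref{etaleprop}(4), it suffices to check étaleness of the map after base change along $M\to F$. The pullback $F_i\times_F M$ is the quotient of $s^{-1}(M_i)\subseteq R$ by the free action of $R_i$, which is identified with $t\colon s^{-1}(M_i)\to M$ up to the equivalence by $R_i$. Since $R_i$ is the full subgroupoid, this quotient is exactly the image $t(s^{-1}(M_i))$, an open subset of $M$ ($t$ is a submersion, hence open), and the map to $M$ is the open inclusion. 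Open embeddings of manifolds are étale by \cite[Lemma~4.20.2]{Clausen}, and coproducts of étale maps over $M$ are étale by Lemma~\ref{etaleprop}(2).

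The main obstacle is making the identification $F_i\times_F M\simeq t(s^{-1}(M_i))$ rigorous inside $\Sh(\Man_F)$ and after applying $L$, since $L$ only preserves certain pullbacks. I would handle this using part (2) of the proposition on $L$, noting that $M\to F$ is locally a submersion, and using full faithfulness via Theorem~\ref{functor_theorem}.
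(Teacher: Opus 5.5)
Your proposal is correct and takes essentially the same approach as the paper. In both, $F_i$ is the realization of the \v{C}ech nerve of $M_i\to F$ (equivalently, the full subgroupoid $R_i$ on $M_i$), and \'etaleness is checked after base change along the atlas $M\to F$, where $F_i\times_F M\to M$ becomes the open inclusion $t(s^{-1}(M_i))\hookrightarrow M$. Your extra remarks fill in details the paper leaves implicit: that $F_i\to F$ is a monomorphism, which gives representability of $M_i\to F_i$, and that part (2) of the proposition on $L$ lets the pullback be computed before applying $L$.
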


\begin{proof}
Choose a smooth atlas of $F$ by a $p$-adic manifold, i.e.\ a surjective representable submersion
\[
p\colon M \longrightarrow F.
\]
Let $R:=M\times_F M$ with source and target maps which are submersions $s,t\colon R\rightrightarrows M$.

Since $M$ is a $p$-adic manifold, we can find a cover $\coprod M_i \to M$, such that each $M_i \to M$ is an open inclusion and each $M_i$ is homeomorphic to a $\mathbb{Z}_p^d$ for some $d$.

For each $i$, consider the composite $M_i \hookrightarrow M \xrightarrow{p} F$ and let
\[
F_i \;:=\; \bigl| \check C(M_i/F)_\bullet \bigr|
\]
be the geometric realization of its \v{C}ech nerve. Concretely, $F_i$ is presented by the simplicial object
\[
M_i \;\substack{\xleftarrow{}\\[-2pt]\xleftarrow{}}\; M_i\times_F M_i
\;\substack{\xleftarrow{}\\[-2pt]\xleftarrow{}\\[-2pt]\xleftarrow{}}\;
M_i\times_F M_i\times_F M_i \;\cdots.
\]
By construction, the natural map $M_i\to F_i$ is a surjective representable submersion, and $F_i$ are smooth.

It remains to show that $F_i\to F$ is \'etale.
Since \'etaleness is local on the target and can be tested after base change along an effective epimorphism (\ref{etaleprop}),
it suffices to check that the base change along $p$,
\[
F_i\times_F M \longrightarrow M,
\]
is \'etale.
Under this base change the map \[
F_i\times_F M \longrightarrow M,
\] is the the inclusion $t(s^{-1}(M_i))\hookrightarrow M$ which is an open immersion since $t$ is an open map.
In particular it is \'etale, so $F_i\to F$ is \'etale by \ref{etaleprop}.
\end{proof}
As the next step, we extract from the compact atlases an \'etale cover by $1$-truncated coherent objects admitting compact Hausdorff totally disconnected groupoid presentations.
\begin{lemma}\label{lem:etale_cover_1trunc_coherent}
Let \(F\) be a smooth \(p\)-adic analytic Artin stack. Then there exists an \'etale cover
\[
\coprod_{i} F_i \longrightarrow F
\]
such that each \(F_i\) is a \(1\)-truncated coherent object. Moreover, for each \(i\), the stack \(F_i\) admits a presentation as the geometric realization of a groupoid object
\[
R_i \rightrightarrows U_i
\]
whose source and target maps \(s,t\colon R_i \to U_i\) are open. In addition, \(R_i\) and \(U_i\) are compact, Hausdorff, and totally disconnected.
\end{lemma}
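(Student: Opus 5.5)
The plan is to combine Lemma~\ref{lem:compact_atlas_etale_local} with the groupoid van Dantzig theorem (Theorem~\ref{VanDan}) and the étaleness of clopen subgroupoid inclusions (Lemma~\ref{lem:etale-clopen-subgroupoid}).

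\textbf{Compact atlases.} First apply Lemma~\ref{lem:compact_atlas_etale_local}. This gives an étale effective epimorphism $\coprod_i F_i\to F$ in which each $F_i$ has a surjective representable submersion $M_i\to F_i$ with $M_i\cong\mathbb{Z}_p^d$. By Theorem~\ref{thm:smooth-artin-stacks-are-1truncated}, each $F_i$ is the realization of a $p$-adic Lie groupoid $G^{(i)}:=(M_i\times_{F_i}M_i\rightrightarrows M_i)$. Its source and target maps are submersions, hence open. Its object space $M_i$ is profinite and its arrow space is a $p$-adic manifold, hence locally profinite.

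\textbf{Shrinking to a compact subgroupoid.} Apply Theorem~\ref{VanDan} to $G^{(i)}$ with $U=G^{(i)}_1$. This produces a Hausdorff compact open wide subgroupoid $H^{(i)}\subseteq G^{(i)}$. Set $U_i:=M_i$ and $R_i:=H^{(i)}_1$, and let $F_i':=|H^{(i)}|$.
\begin{itemize}
\item $R_i$ is compact Hausdorff and open in a locally profinite space, so it has a basis of compact open, hence clopen, sets. It is therefore totally disconnected, exactly as argued inside the proof of Theorem~\ref{VanDan}.
\item The source and target maps of $H^{(i)}$ are restrictions of open maps to an open subset, so they remain open.
\item By Lemma~\ref{lem:etale-clopen-subgroupoid}, the map $F_i'\to F_i$ is étale.
\item $F_i'\to F_i$ is also surjective, since $M_i\to F_i$ factors through it.
\end{itemize}
Étale maps are closed under composition (Lemma~\ref{etaleprop}), and effective epimorphisms compose. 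Hence $\coprod_i F_i'\to F$ is an étale cover, and we replace $F_i$ by $F_i'$.

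\textbf{Truncatedness and coherence.} $F_i'$ is the realization of a groupoid object on $0$-truncated objects, so it is $1$-truncated. For coherence:
\begin{itemize}
\item $U_i$ is a profinite set, hence coherent, and $U_i\to F_i'$ is an effective epimorphism.
\item The Čech nerve of $U_i\to F_i'$ is the nerve of $H^{(i)}$. Its terms are the iterated fibre products $R_i\times_{U_i}\cdots\times_{U_i}R_i$, which are closed subspaces of finite products of profinite sets, hence profinite and coherent.
\item The diagonal of $F_i'$ pulls back along $U_i\times U_i\to F_i'\times F_i'$ to $R_i\to U_i\times U_i$, which is a map of coherent objects.
\end{itemize}
Since a cover by a coherent object with coherent Čech nerve yields a coherent object in $\PykS$, $F_i'$ is coherent.

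\textbf{Main obstacle.} The delicate step is this coherence claim. Descending coherence along the effective epimorphism $U_i\to F_i'$ requires the quasi-separatedness of the diagonal shown above and the standard descent statement for coherent objects in a coherent $\infty$-topos. I would verify it by checking quasi-compactness of $F_i'$, which follows from that of $U_i$ and surjectivity, together with coherence of the diagonal, which is the profinite map $R_i\to U_i\times U_i$. Because $F_i'$ is $1$-truncated, only finitely many levels of this check are needed.

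A secondary point is that Lemma~\ref{lem:etale-clopen-subgroupoid} applies here. The subgroupoid $H^{(i)}$ is open and wide in the Lie groupoid $G^{(i)}$, which is exactly the hypothesis of that lemma.
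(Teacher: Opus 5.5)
Your proposal is correct and follows essentially the same route as the paper: reduce via Lemma~\ref{lem:compact_atlas_etale_local} to the Lie groupoid on $M_i\cong\mathbb{Z}_p^d$, shrink it to a compact open wide subgroupoid using Theorem~\ref{VanDan}, obtain \'etaleness from Lemma~\ref{lem:etale-clopen-subgroupoid}, and obtain surjectivity from wideness. You also check $1$-truncatedness and coherence, which the paper's proof leaves implicit; for coherence, the clean fact to invoke is that an effective epimorphism $U_i\to F_i'$ from a coherent object with $U_i\times_{F_i'}U_i\simeq R_i$ coherent makes $F_i'$ coherent at every level at once, so your remark that $1$-truncatedness means ``only finitely many levels'' need checking is unnecessary.
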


\begin{proof}
Since the claim is \'etale-local on $F$ by Lemma~\ref{lem:compact_atlas_etale_local} we may assume that $F$
has a cover by $\mathbb{Z}_p^d$, and as a consequence is the realization $|G|$ of a groupoid $G\rightrightarrows M$ satisfying the
hypotheses of~\ref{VanDan}. (Here we use \ref{thm:smooth-artin-stacks-are-1truncated} too) Choose a compact Hausdorff open
subgroupoid $G'\subseteq G$ as in~\ref{VanDan}, and set $F':=|G'|$.
Observe that such \(F'\) covers \(F\). Moreover, the groupoid \(G'\) has open source and target maps and is an open subgroupoid of a groupoid satisfying these conditions. Moreover, from $\ref{VanDan}$ it is compact, Hausdorff, and totally disconnected. Therefore, it suffices to show that \(F' \to F\) is \'etale which is exactly \ref{lem:etale-clopen-subgroupoid}.
\end{proof}

We can now apply the general light-profinite criterion to each $F_i$.
\begin{corollary}\label{cor:Fi_profinite_anima}
Let $F$ be one of the stacks $F_i$ from Lemma~\ref{lem:etale_cover_1trunc_coherent}. Then $F$ is light profinite.
\end{corollary}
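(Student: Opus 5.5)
The plan is to assemble the results already in place. By Lemma~\ref{lem:etale_cover_1trunc_coherent}, $F\simeq |R\rightrightarrows U|$, where $R$ and $U$ are compact, Hausdorff and totally disconnected (hence profinite), and $s,t$ are open. First I would check that $R$ and $U$ are \emph{light} profinite. The object space $U$ is a compact open subset of some $\mathbb{Z}_p^d$, and $R$ is a compact open subset of the $p$-adic manifold $M_i\times_F M_i$. Both are second countable and metrizable, so they are sequential inverse limits of finite discrete sets. Thus the groupoid $\mathcal G:=(R\rightrightarrows U)$ satisfies the standing hypotheses of Lemma~\ref{lem:skeletal-replacement}: its object and morphism spaces are light profinite, and its source and target maps are open.

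Next, Lemma~\ref{lem:skeletal-replacement} gives a skeleton $\mathcal X\to\mathcal G$ with $|\mathcal X|\xrightarrow{\sim}|\mathcal G|$. Theorem~\ref{thm:X-to-Xpi-equivalence} then shows that $|\mathcal X|$ is a light profinite anima. Since $|\mathcal G|$ is computed in $\PykS$, I must identify this realization with $F$, or more precisely with $i_*$ of the light condensed anima $L(F)$ restricted to the relevant piece. The argument is as follows. The functor $L$ preserves colimits, so $L(F)\simeq |L(\mathcal G)|$. The functor $i_*$ is a left adjoint, so it preserves geometric realizations. The pyknotic incarnations of light profinite sets are the images of the light condensed sets they represent. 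Together these give $i_*L(F)\simeq |\mathcal G|$ in $\PykS$.

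A subtle point is that $\mathcal G$ here is the compact open subgroupoid $G'$ of Theorem~\ref{VanDan}, not the Čech groupoid of an atlas. Its realization is therefore the $F_i$ of the lemma by definition ($F':=|G'|$), and this is harmless. Combining the steps, $F\simeq|\mathcal X|$ is light profinite.

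The main obstacle I expect is the bookkeeping between the three settings: sheaves on $\Man_{\mathbb Q_p}$, $\CondAn^{\mathrm{light}}$, and $\PykS$. Specifically, I need the geometric realization of $G'$ in light condensed anima to agree with the realization in $\PykS$ to which Theorem~\ref{thm:X-to-Xpi-equivalence} applies. I would handle this by using that $i_*$ is fully faithful and colimit-preserving, so that lightness is retained. This is exactly the input needed for the final appeal to Proposition~\ref{light} inside the proof of Theorem~\ref{thm:X-to-Xpi-equivalence}. A secondary point to verify is coherence and $1$-truncatedness, which Corollary~\ref{whitehead} requires. These hold because the realization of a groupoid with compact $0$-truncated object and morphism spaces is coherent and $1$-truncated, as already asserted in Lemma~\ref{lem:etale_cover_1trunc_coherent}.
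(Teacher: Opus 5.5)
Your proposal is correct and follows the paper's route. The paper's one-line proof applies Lemma~\ref{lem:skeletal-replacement} to the compact groupoid $R\rightrightarrows U$ and then invokes Theorem~\ref{thm:X-to-Xpi-equivalence}. Your extra checks are details the paper leaves implicit, and they are sound: lightness of $R$ and $U$ via finitely many second-countable charts, and the identification of the pyknotic realization with $i_*L(F)$ because $L$ and $i_*$ preserve colimits and send these compact manifolds to representables.
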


\begin{proof}
First, by Lemma~\ref{lem:skeletal-replacement}, we see that $F_i$ satisfies the conditions of Theorem~\ref{thm:X-to-Xpi-equivalence}; hence $F_i$ are light profinite.
\end{proof}
We are now ready to conclude the proof of Conjecture~5.17.

\begin{corollary}[\label{thm:clausen-conj}{\cite[Conjecture 5.17]{Clausen}}]
Let $F$ be a smooth $p$-adic analytic Artin stack. Then \'etale-locally on $F$, the stack is a \emph{light profinite anima}.
\end{corollary}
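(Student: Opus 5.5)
The plan is to assemble the final statement from Lemma~\ref{lem:etale_cover_1trunc_coherent} and Corollary~\ref{cor:Fi_profinite_anima}, then check that the resulting cover is an étale cover in the sense of $\mathrm{CondAn}^{\mathrm{light}}$.

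\textbf{Step 1: produce the cover.} Starting from $F$, apply Lemma~\ref{lem:compact_atlas_etale_local} to obtain an étale effective epimorphism $\coprod_i F_i\to F$ in which each $F_i$ has an atlas homeomorphic to $\mathbb{Z}_p^d$. By Theorem~\ref{thm:smooth-artin-stacks-are-1truncated}, each $F_i$ is the realization of a $p$-adic Lie groupoid $G^{(i)}\rightrightarrows M_i$ with open source and target maps. Theorem~\ref{VanDan} then supplies a compact Hausdorff open wide subgroupoid $H^{(i)}\subseteq G^{(i)}$.

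\textbf{Step 2: the refined maps are étale covers.} Lemma~\ref{lem:etale-clopen-subgroupoid} shows that $|H^{(i)}|\to|G^{(i)}|=F_i$ is étale. It is an effective epimorphism because $H^{(i)}$ is wide: the composite $M_i\to|H^{(i)}|\to F_i$ is already surjective. Composition and coproducts preserve étale maps (Lemma~\ref{etaleprop}(1),(2)), so $\coprod_i |H^{(i)}|\to F$ is an étale cover. This is precisely Lemma~\ref{lem:etale_cover_1trunc_coherent}.

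\textbf{Step 3: each piece is light profinite.} Each $|H^{(i)}|$ has object and morphism spaces that are compact Hausdorff, totally disconnected, and second countable, since they are open subsets of $p$-adic manifolds and closed subsets of $\mathbb{Z}_p^d$. They are therefore light profinite, and the source and target maps remain open. Corollary~\ref{cor:Fi_profinite_anima} then applies: Lemma~\ref{lem:skeletal-replacement} passes to a skeleton with the same realization, and Theorem~\ref{thm:X-to-Xpi-equivalence} shows the realization is a light profinite anima.

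\textbf{Step 4: compatibility of the two settings.} Most of the argument runs in $\PykS$, so we must confirm it is compatible with $L(F)$ in $\mathrm{CondAn}^{\mathrm{light}}$. The functor $L$ preserves colimits, and it preserves the fiber products $M\times_F M$ because the maps involved are submersions (Clausen's Proposition 5.3(2)). Hence $L(F_i)$ is the realization of the topological groupoid. The embedding $i_*$ is a fully faithful left adjoint, so it preserves this realization, and the profiniteness statement computed in $\PykS$ descends. Proposition~\ref{light} supplies the lightness.

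The main obstacle is this last compatibility, together with the étaleness bookkeeping. In particular, the realization $|H^{(i)}|$ has to be formed in $\mathrm{CondAn}^{\mathrm{light}}$ while Lemma~\ref{lem:etale-clopen-subgroupoid} is stated for étaleness there. I would handle this by noting that all objects involved are coherent $1$-truncated. This lets us compare the two realizations via Theorem~\ref{thm:Gamma-conservative-1type} on global sections, since the underlying groupoids of points agree.
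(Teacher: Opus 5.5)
Your proposal is correct and follows the paper's proof, which simply combines Lemma~\ref{lem:etale_cover_1trunc_coherent} with Corollary~\ref{cor:Fi_profinite_anima}; your Steps 1--3 unpack exactly those two results. Your Step 4 spells out the compatibility between pyknotic and light condensed anima that the paper leaves implicit, and there the appeal to Theorem~\ref{thm:Gamma-conservative-1type} is unnecessary, since $i_*$ is a left adjoint and already carries the realization formed in light condensed anima to the pyknotic one.
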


\begin{proof}
Combine Lemma~\ref{lem:etale_cover_1trunc_coherent} with Corollary~\ref{cor:Fi_profinite_anima}.
\end{proof}

\bibliographystyle{alphaurl} 
\bibliography{meta/references} 

@book{HTT,
  title={Higher Topos Theory (AM-170)},
  author={Lurie, Jacob},
  isbn={9780691140490},
  lccn={2008038170},
  series={Annals of Mathematics Studies},
  url={https://books.google.co.il/books?id=CTe68E8wK4QC},
  year={2009},
  publisher={Princeton University Press}
}

@misc{Clausen,
  author        = {Clausen, Dustin},
  title         = {Duality and linearization for {$p$}-adic Lie groups},
  year          = {2025},
  eprint        = {2506.18174},
  archivePrefix = {arXiv},

  url           = {https://arxiv.org/abs/2506.18174},
}

@misc{DerivedStoneEmbedding,
  author        = {Kaminski, A.},
  title         = {Derived Stone embedding},
  year          = {2024},
  eprint        = {2410.24077},
  archivePrefix = {arXiv},
  url           = {https://arxiv.org/abs/2410.24077},
}

@misc{BH19,
  author       = {Barwick, C. and Haine, P.},
  title        = {Pyknotic objects, I: Basic notions},
  howpublished = {arXiv:1904.09966},
  year         = {2019},
  eprint       = {1904.09966},
  archivePrefix= {arXiv},
  primaryClass = {math},
  url          = {https://arxiv.org/abs/1904.09966}
}

@book{serre,
  author    = {Serre, Jean-Pierre},
  title     = {Lie Algebras and Lie Groups},
  subtitle  = {1964 Lectures given at Harvard University},
  series    = {Lecture Notes in Mathematics},
  volume    = {1500},
  edition   = {2},
  publisher = {Springer Berlin, Heidelberg},
  year      = {1992},
  doi       = {10.1007/978-3-540-70634-2},
  isbn      = {978-3-540-55008-2},
  note      = {Originally published by W. A. Benjamin, Inc., New York, 1965. eBook ISBN: 978-3-540-70634-2}
}

@misc{ConCom,
  title        = {Condensed Mathematics and Complex Geometry},
  author       = {Clausen, Dustin and Scholze, Peter},
  year         = {2022},
  month        = apr,
  note         = {Lecture notes (course jointly between Bonn and Copenhagen, Summer 2022)},
  institution  = {Max Planck Institute for Mathematics (MPIM), Bonn},
  url          = {https://people.mpim-bonn.mpg.de/scholze/Complex.pdf},
  urldate      = {2026-01-29}
}

@article{loc,
  title         = {General non-commutative locally compact locally Hausdorff Stone duality},
  author        = {Bice, Tristan and Starling, Charles},
  journal       = {Advances in Mathematics},
  volume        = {341},
  pages         = {40--91},
  year          = {2019},
  doi           = {10.1016/j.aim.2018.10.031},
  eprint        = {1803.00394},
  archivePrefix = {arXiv},
  primaryClass  = {math.OA},
  url           = {https://arxiv.org/abs/1803.00394},
  urldate       = {2026-02-26}
}

@misc{erratum,
  title        = {ERRATUM TO "p-ADIC HODGE THEORY FOR RIGID-ANALYTIC
VARIETIES"},
  author       = {Scholze, Peter},

  url          = {https://www.math.uni-bonn.de/people/scholze/pAdicHodgeErratum.pdf},
}

\end{document}